\ifdefined\pdfoutput
  \pdfoutput=1
\fi

\documentclass[11pt,a4paper]{article}

\usepackage[T1]{fontenc}
\usepackage[utf8]{inputenc}
\usepackage[english]{babel}
\usepackage{lmodern}
\usepackage[final,protrusion=true,expansion=true]{microtype}
\usepackage[
  top=2.1cm,
  bottom=2.1cm,
  left=2.3cm,
  right=2.3cm,
  footskip=28pt
]{geometry}

\usepackage{setspace}
\usepackage{amsmath,amssymb,amsfonts,amsthm,mathtools}
\usepackage{bm}
\usepackage{bbm}
\allowdisplaybreaks
\numberwithin{equation}{section}
\mathtoolsset{showonlyrefs=false}

\usepackage{xcolor}
\usepackage{graphicx}
\graphicspath{{figures/}{images/}}
\usepackage{booktabs}
\usepackage{array}
\usepackage{adjustbox}
\usepackage{caption}
\usepackage{subcaption}
\usepackage{float}
\usepackage[section]{placeins}

\usepackage{tikz}
\usetikzlibrary{arrows.meta,calc,decorations.markings,decorations.pathreplacing,positioning}
\tikzset{
  vtx/.style={circle,fill=black,draw=black,inner sep=2pt},
  gluevtx/.style={circle,fill=red,draw=red,inner sep=2.15pt},
  ed/.style={blue,line width=1.15pt},
  every picture/.style={line cap=round,line join=round}
}

\usepackage{algorithm}
\usepackage{algpseudocode}
\floatname{algorithm}{Algorithm}

\usepackage{enumitem}
\setlist[itemize]{leftmargin=2.2em,itemsep=0.15em,topsep=0.35em}
\setlist[enumerate]{leftmargin=2.2em,itemsep=0.15em,topsep=0.35em}

\usepackage{authblk}

\usepackage{titlesec}
\titleformat{\section}
  {\large\bfseries}{\thesection.}{0.65em}{}
\titleformat{\subsection}
  {\normalsize\bfseries}{\thesubsection.}{0.65em}{}
\titleformat{\subsubsection}
  {\normalsize\itshape}{\thesubsubsection.}{0.65em}{}
\titlespacing*{\section}{0pt}{2.4ex plus .4ex minus .2ex}{1.1ex}
\titlespacing*{\subsection}{0pt}{1.9ex plus .3ex minus .2ex}{0.8ex}
\titlespacing*{\subsubsection}{0pt}{1.5ex plus .2ex minus .2ex}{0.6ex}

\usepackage{aliascnt}

\theoremstyle{plain}
\newtheorem{theorem}{Theorem}[section]

\newaliascnt{lemma}{theorem}
\newtheorem{lemma}[lemma]{Lemma}
\aliascntresetthe{lemma}

\newaliascnt{proposition}{theorem}
\newtheorem{proposition}[proposition]{Proposition}
\aliascntresetthe{proposition}

\newaliascnt{corollary}{theorem}
\newtheorem{corollary}[corollary]{Corollary}
\aliascntresetthe{corollary}

\newaliascnt{conjecture}{theorem}

\aliascntresetthe{conjecture}

\newaliascnt{problem}{theorem}
\newtheorem{problem}[problem]{Problem}
\aliascntresetthe{problem}

\newaliascnt{observation}{theorem}

\aliascntresetthe{observation}

\newaliascnt{claim}{theorem}
\newtheorem{claim}[claim]{Claim}
\aliascntresetthe{claim}

\newaliascnt{case}{theorem}

\aliascntresetthe{case}

\theoremstyle{definition}
\newaliascnt{definition}{theorem}

\aliascntresetthe{definition}

\newaliascnt{example}{theorem}

\aliascntresetthe{example}

\theoremstyle{remark}
\newaliascnt{remark}{theorem}

\aliascntresetthe{remark}

\newaliascnt{question}{theorem}

\aliascntresetthe{question}

\newaliascnt{fact}{theorem}

\aliascntresetthe{fact}

\newcommand{\eps}{\ensuremath{\varepsilon}}

\DeclareMathOperator{\ex}{ex}

\usepackage[numbers,sort&compress]{natbib}
\definecolor{linkblue}{RGB}{0,70,150}
\usepackage[
  colorlinks=true,
  linkcolor=linkblue,
  citecolor=blue,
  urlcolor=black,
  bookmarksnumbered=true,
  bookmarksopen=true
]{hyperref}

\usepackage{bookmark}
\usepackage[nameinlink,noabbrev,capitalise]{cleveref}

\crefname{theorem}{Theorem}{Theorems}
\crefname{lemma}{Lemma}{Lemmas}
\crefname{proposition}{Proposition}{Propositions}
\crefname{corollary}{Corollary}{Corollaries}
\crefname{conjecture}{Conjecture}{Conjectures}
\crefname{problem}{Problem}{Problems}
\crefname{observation}{Observation}{Observations}
\crefname{claim}{Claim}{Claims}
\crefname{case}{Case}{Cases}
\crefname{definition}{Definition}{Definitions}
\crefname{example}{Example}{Examples}
\crefname{remark}{Remark}{Remarks}
\crefname{question}{Question}{Questions}
\crefname{fact}{Fact}{Facts}
\crefname{equation}{equation}{equations}
\crefname{figure}{Figure}{Figures}
\crefname{table}{Table}{Tables}
\crefname{algorithm}{Algorithm}{Algorithms}
\Crefname{equation}{Equation}{Equations}

\title{\bfseries Tight Bound for Nikiforov's Spectral Even-Cycle Conjecture}
\author[1]{Peiru Kuang\thanks{Email: \href{mailto:peiru\_k@sjtu.edu.cn}{peiru\_k@sjtu.edu.cn}. }}
\author[1]{Feng Liu\thanks{Email: \href{mailto:liufeng0609@126.com}{liufeng0609@126.com}.}}
\author[1]{Shuang Sun\thanks{Email: \href{mailto:chocolatesun@sjtu.edu.cn}{chocolatesun@sjtu.edu.cn}.}}
\author[1]{Yan Wang\thanks{Email: \href{mailto:yan.w@sjtu.edu.cn}{yan.w@sjtu.edu.cn}.}}
\author[2]{Jiasheng Zeng\thanks{Email: \href{mailto:jasonzeng@mail.ustc.edu.cn}{jasonzeng@mail.ustc.edu.cn}.}}

\affil[1]{School of Mathematical Sciences, Shanghai Jiao Tong University, Shanghai 200240, China}
\affil[2]{Department of Mathematics,
Hong Kong University of Science and Technology,
Clear Water Bay, Hong Kong}

\date{}

\begin{document}

\maketitle
\thispagestyle{plain}
\vspace{-1.2em}

\begin{abstract}
Nikiforov conjectured that, for every fixed $k\ge2$ and all sufficiently large $n$, the unique $n$-vertex $C_{2k+2}$-free graph with maximum adjacency spectral radius is $S^+_{n,k}$, where $S_{n,k}=K_k\vee\overline K_{n-k}$ and $S^+_{n,k}$ is obtained from $S_{n,k}$ by adding one edge inside the independent part. Cioab\u{a}, Desai and Tait proved this conjecture for $n\ge k^{O(k)}$. Later, Li and Ning raised the problem of determining the optimal exponent $\gamma=\gamma(k)$ such that the same conclusion holds for $n\ge \Omega(k^{\gamma(k)})$.

We prove a stronger uniform theorem for Nikiforov's matrices $A_\alpha(G)=\alpha D(G)+(1-\alpha)A(G)$. More precisely, for every $\eps>0$ there are constants $C_\eps$ and $k_\eps$ such that for all $0\le\alpha\le1-\eps$, $k\ge k_\eps$ and $n\ge C_\eps k$, every $n$-vertex $C_{2k+2}$-free graph $G$ satisfies $\rho_\alpha(G)\le\rho_\alpha(S^+_{n,k})$, with equality if and only if $G\cong S^+_{n,k}$. In particular, the case $\alpha=0$ answers the problem of Li and Ning, and the $A_\alpha$-spectral even-cycle threshold is linear in $k$, uniformly for all $\alpha$ bounded away from $1$.

Our proof introduces a weighted rooted Erd\H{o}s--Gallai type path lemma, which may be of independent interest in Perron-vector methods for spectral extremal graph problems. The same method also yields asymptotically tight $A_\alpha$-spectral bounds for two local forbidden-subgraph families, namely $(K_1\vee P_\ell)$-free graphs and $F_s$-free graphs, where $F_s$ denotes the friendship graph.
\end{abstract}

\section{Introduction}
The classical Tur\'an problem goes back to Mantel~\citep{Mantel1907} and Tur\'{a}n~\citep{Turan1941}. It asks for the maximum number of edges in an $n$-vertex graph that contains no copy of a fixed graph $H$. 
Its spectral analogue replaces the number of edges by the adjacency spectral radius: for a fixed forbidden graph $H$, one seeks the largest possible value of $\lambda(G)$ over all $n$-vertex $H$-free graphs $G$, where $\lambda(G)$ denotes the adjacency spectral radius of $G$. This viewpoint is natural since a large spectral radius often forces subgraphs to appear. This phenomenon was already present in early results of Nosal, Wilf and Stanley~\citep{Nosal1970,Wilf1986,Stanley1987}, which gave spectral conditions for the existence of certain subgraphs. Since then, many spectral Tur\'an results have been obtained, including results for paths, cycles and cliques \citep{Nikiforov2007,Nikiforov2008,Nikiforov2010}. Beyond these concrete forbidden subgraphs, there are also general results on spectral extremal graphs~\citep{ByrneDesaiTait2025}, as well as structural spectral extremal theorems for planar graphs, minor-closed classes, color-critical graphs, wheels and related families \citep{CioabaDesaiTait2022OddWheels,CioabaDesaiTait2023,DvorakMohar2010,TaitTobin2017,Tait2019CdV}.

To study spectral extremal problems beyond the adjacency matrix, Nikiforov~\citep{Nikiforov2017} introduced the matrices $A_\alpha(G)$, defined for $0\le \alpha\le 1$ by
$$ A_\alpha(G)=\alpha D(G)+(1-\alpha)A(G),$$
where $A(G)$ is the adjacency matrix and $D(G)$ is the degree matrix of $G$. Let $\rho_\alpha(G)$ denote the largest eigenvalue of $A_\alpha(G)$. This family connects the adjacency matrix with the degree matrix. In particular, when $\alpha=0$, we have $\rho_0(G)=\lambda(G)$, and when $\alpha=1/2$, the matrix $A_{1/2}(G)$ is one half of the signless Laplacian matrix. Thus the $A_\alpha$ setting allows one to study adjacency and signless-Laplacian spectral extremal problems in a unified way. Recent work in this direction includes the comparison theorem of Byrne, Desai and Tait~\citep{ByrneDesaiTait2025}, as well as $A_\alpha$-spectral extremal results for even cycles, linear forests, Erd\H{o}s--S\'os type problems and Erd\H{o}s--P\'osa type problems \citep{ChenLiLiYuZhang2023AalphaES,ChenLiuZhang2023LinearForestsAalpha,LiYu2023AalphaEvenCycles,LiYuZhang2023AalphaErdosPosa,NikiforovYuan2015QEvenCycles}.

In this paper we focus on the $A_\alpha$-spectral Tur\'an problem for even cycles. Even cycles are among the basic forbidden bipartite graphs, but they already show the main difficulty of sparse extremal problems. In ordinary extremal graph theory, bipartite forbidden graphs are not covered by the Erd\H{o}s--Stone--Simonovits theorem in the same way as non-bipartite forbidden graphs; see the survey of F\"uredi and Simonovits~\citep{FurediSimonovits2013}. Extremal problems for prescribed cycle lengths and for arithmetic progressions of cycle lengths were studied by Verstra\"ete~\citep{Verstraete2000,Verstraete2016}. In the spectral setting, Babai and Guiduli~\citep{BabaiGuiduli2009} proved a spectral Zarankiewicz theorem for bipartite forbidden graphs. Exact spectral results for even cycles require more detailed structure. A large spectral radius may be supported on a small set of high-weight vertices, and this concentration can force long even cycles even when the average degree is only linear in the forbidden length. The natural extremal candidate has this same concentrated form: a small clique joined to a large independent set.

For $n>k$, let $S_{n,k}=K_k\vee \overline K_{n-k}$, and let $S^+_{n,k}$ be obtained from $S_{n,k}$ by adding one edge inside the independent part. The graph $S^+_{n,k}$ is $C_{2k+2}$-free. Indeed, write $L$ for the clique part and $R$ for the independent part. Every cycle contains at most as many vertices from $R$ as from $L$ in $S_{n,k}$. Since $|L|=k$, every cycle in $S_{n,k}$ contains at most $k$ vertices from $R$. After adding one edge inside $R$, a cycle can contain at most one more vertex from $R$ than from $L$. Hence every cycle in $S^+_{n,k}$ has length at most $2k+1$, and so $S^+_{n,k}$ contains no copy of $C_{2k+2}$. Thus $S^+_{n,k}$ gives a natural lower-bound construction for the $C_{2k+2}$-free problem. Nikiforov conjectured that, for every fixed $k\ge2$ and all sufficiently large $n$, this construction is best possible in the adjacency spectral setting: $S^+_{n,k}$ is the unique $n$-vertex $C_{2k+2}$-free graph with maximum adjacency spectral radius. The first exact results were obtained for $k=2$, that is, for the forbidden-$C_6$ problem, by Zhai and Lin~\citep{ZhaiLin2020} and by Zhai, Wang and Fang~\citep{ZhaiWangFang2020}. Further progress on this family was made by Wang, Kang and Xue~\citep{WangKangXue2023}. The full conjecture was proved by Cioab\u{a}, Desai and Tait~\citep{CioabaDesaiTait2024}; their proof gives a sufficient range of the form $n\ge k^{O(k)}$.

Although this settles the conjecture for each fixed $k$, it does not determine the right dependence of $n$ on $k$. The range $n\ge k^{O(k)}$ leaves open a sharper threshold question: how large must $n$ be before the comparison with $S^+_{n,k}$ already forces the forbidden cycle? This question was raised by Li and Ning~\citep{LiNing2023} and was also recorded by Liu and Ning~\citep{liu2023unsolved}.

\begin{problem}[Li-Ning~\citep{LiNing2023}, and Liu-Ning~\citep{liu2023unsolved}]
\label{prob:LN-threshold}
For each integer $k\ge 3$, determine the optimal exponent $\beta=\beta(k)$ with the following property: there is a constant $c>0$, independent of $k$ and $n$, such that every graph $G$ on $n\ge c k^\beta$ vertices with
$\lambda(G)>\lambda(S^+_{n,k})$ contains a copy of $C_{2k+2}$.
\end{problem}

The problem above is stated for the adjacency spectral radius. Our result is naturally formulated in the $A_\alpha$ setting introduced earlier. For fixed $\varepsilon>0$ and an integer $k\ge3$, let $N_\varepsilon(k)$ be the least integer $N$ with the following property: for every $n\ge N$ and every $0\le \alpha\le 1-\varepsilon$, if $G$ is an $n$-vertex $C_{2k+2}$-free graph, then $ \rho_\alpha(G)\le \rho_\alpha(S^+_{n,k}),$ with equality if and only if $G\cong S^+_{n,k}$. Our main theorem states that this threshold is linear in $k$, uniformly for all $0\le \alpha\le 1-\varepsilon$.
\begin{theorem}
\label{thm:main}
For every $\varepsilon>0$ there exist constants $C_\varepsilon>0$ and $k_\varepsilon$ such that the following holds. Let $0\le \alpha\le 1-\varepsilon$, $k\ge k_\varepsilon$ and let $n\ge C_\varepsilon k$. If $G$ is an $n$-vertex $C_{2k+2}$-free graph, then $ \rho_\alpha(G)\le \rho_\alpha(S^+_{n,k})$ with equality if and only if $G\cong S^+_{n,k}$. Consequently, $ N_\varepsilon(k)=\Theta_\varepsilon(k)$.
\end{theorem}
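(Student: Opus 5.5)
We follow the standard Perron-vector stability scheme, keeping every step quantitative and linear in $k$. Let $G$ be an extremal $n$-vertex $C_{2k+2}$-free graph maximizing $\rho_\alpha$, with $n\ge C_\eps k$. We may assume $G$ is connected. Let $\rho=\rho_\alpha(G)$ and let $\mathbf x$ be its positive unit Perron vector, normalized so that $\max_v x_v=x_{u^*}=1$.

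\textbf{Basic estimates.} Comparing with $S^+_{n,k}$ gives
\[
\rho\ge\rho_\alpha(S^+_{n,k})=(1-\alpha)\sqrt{kn}\,(1+o(1))+\alpha k+O(1)\qquad\text{for }n\ge C_\eps k.
\]
In the opposite direction, the Erd\H{o}s--Gallai bound $e(G)\le kn$ for $C_{2k+2}$-free graphs (no long paths inside neighbourhoods, or the bound of Bondy--Simonovits type, which is linear in $k$ via the path lemma) gives $\sum_v d_v^2=O(kn^2)$. Hence $\rho=O_\eps(\sqrt{kn})$ together with the eigen-equation
\[
(\rho-\alpha d_v)x_v=(1-\alpha)\sum_{u\sim v}x_u .
\]
Because $\alpha\le1-\eps$, the factor $1-\alpha\ge\eps$ stays bounded away from $0$, so all constants depend only on $\eps$.

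\textbf{Key step: weighted rooted Erd\H{o}s--Gallai path lemma.} The lemma states that if $H$ is a graph with vertex weights in $[0,1]$ containing no path on $2k+1$ vertices starting at a root, then the total weighted degree is at most $O(k)$ times the total weight. Applying it to $G[N(v)\cup N_2(v)]$ for a vertex $v$ of large weight, and using $C_{2k+2}$-freeness (a long rooted path in the second neighbourhood closes into a cycle of length $2k+2$ through $v$), bounds the weighted second-order sum
\[
\rho^2x_v\approx\sum_{u\sim v}\sum_{w\sim u}x_w
\]
by roughly $k\sum_u x_u+d_v$. From this we deduce, in order:
\begin{enumerate}
\item $\sum_u x_u=O(\sqrt{n/k})$.
\item The set $L=\{v: x_v\ge\delta\}$ has $|L|\le(1+o(1))k$; otherwise $k+1$ high-weight vertices with $\Omega(n)$ common neighbours would produce $C_{2k+2}$ using $k+1$ vertices of $L$ and $k+1$ common neighbours.
\item Each $v\in L$ has degree at least $(1-\eta)n$. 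This follows from $\rho x_v\approx$ the sum over neighbours together with $x_v\ge\delta$.
\end{enumerate}
Since any $k+1$ vertices of almost full degree form $C_{2k+2}$ via common neighbours, we get $|L|\le k$, and then $|L|=k$ exactly, because otherwise $\rho$ falls below $\rho_\alpha(S^+_{n,k})$ by a comparison of order $\sqrt{n/k}$.

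\textbf{Finishing.} With $|L|=k$ vertices of degree at least $(1-\eta)n$, all remaining vertices have weight $O(\sqrt{k/n})$. The eigen-equation then forces $L$ to be a clique completely joined to $R=V\setminus L$, since adding any missing edge at $L$ increases $\rho$ while preserving $C_{2k+2}$-freeness, as any cycle would be absorbed by the structure count. Finally, $G[R]$ must contain no $P_3$ and no $2K_2$: a path $w_1w_2w_3$ in $R$, or two disjoint edges in $R$, together with the $k$ hubs yields a cycle of length $2k+2$ by alternating through $L$. So $G[R]$ has at most one edge, and extremality forces exactly one, giving $G\cong S^+_{n,k}$. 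The lower bound $N_\eps(k)=\Omega(k)$ comes from a small construction such as $K_{2k+1}$ (or a disjoint union of such cliques) when $n\le ck$, whose $\rho_\alpha\approx 2k$ exceeds $\rho_\alpha(S^+_{n,k})$.

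\textbf{Main obstacle.} The hardest part is the second step: getting $|L|\le(1+o(1))k$ with only $n\ge Ck$ rather than $n\ge k^{O(k)}$. Crude counting loses factors of $k$ at each iteration, so the weighted path lemma must be sharp up to constants, and its constant must be controlled uniformly in $\alpha$. My first attempt would be a DFS-tree/Pósa-rotation proof of the weighted lemma, charging each weighted edge to a vertex within the first $2k$ layers.
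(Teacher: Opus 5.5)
Your skeleton matches the paper's: a heavy set $L$, almost-full degrees, $|L|\le k$, then $|L|=k$, then $H[R]$ has at most one edge. However, several of your intermediate estimates are false, and the decisive step is only asserted.

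\textbf{The basic estimates.}
\begin{itemize}
\item Testing $A_\alpha$ on the unit vector at a vertex of degree $n-1$ gives $\rho\ge\rho_\alpha(S^+_{n,k})\ge\alpha(n-1)$. So your asymptotic for $\rho_\alpha(S^+_{n,k})$, and the claim $\rho=O_\eps(\sqrt{kn})$, fail as soon as $\alpha\gg\sqrt{k/n}$. The natural comparison value is $\rho_\alpha(S_{n,k})$, the larger root of $(t-a_k)(t-b_k)=\beta^2k(n-k)$ with $\beta=1-\alpha$, $a_k=\alpha(n-1)+\beta(k-1)$ and $b_k=\alpha k$. The small-$\alpha$ and $\alpha$-bounded-below regimes behave differently.
\item The bound $e(G)\le kn$ is false for $C_{2k+2}$-free graphs once $n$ is large compared with $k$. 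Erd\H{o}s--Gallai controls paths or circumference, not a single forbidden cycle length. What is available is $\ex(n,C_{2k+2})\le 8kn^{1+1/(k+1)}$. Together with $\rho\ge\eps\sqrt{kn}$, this gives $\sum_u x_u\le\mu n$ for any fixed $\mu>0$.
\item Your item~1, $\sum_u x_u=O(\sqrt{n/k})$, already fails for $S_{n,k}$ at $\alpha=0$. There the max-normalized Perron vector has mass about $\sqrt{kn}$.
\end{itemize}

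\textbf{The degree step.} Items~2--3 do not work with a small threshold $\delta$. At $\alpha=0$, expand $\rho^2x_v$ in two steps and compare with $\rho^2\ge\beta^2k(n-k)$. This gives roughly $k\,d(v)\ge x_v\,kn-(\text{error})$, so $d(v)$ is only of order $\delta n$. Such vertices need not share neighbours, so neither the degree bound nor $|L|\le(1+o(1))k$ follows; note also that item~2 already uses item~3.

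To force $d(v)\ge(1-\xi)n$ you need two things:
\begin{itemize}
\item the threshold $x_v\ge1-\eta$, close to $1$;
\item a rooted lemma with both path endpoints in the root set and the sharp constants $(s-1)\|g\|_\infty\sum f+s\|f\|_\infty\sum g$.
\end{itemize}
An unspecified $O(k)$ constant only yields $d(v)\ge cn$ for some $c<1$. A DFS/P\'osa proof would have to reproduce exactly these constants. The paper gets them from Voss--Zuluaga plus an induction on the root set, then integrates over level sets.

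\textbf{The missing core step.} The hard part is not $|L|\le k$, which is immediate once hubs have degree $(1-\xi)n$. It is excluding $|L|=h=k-r$ with $r\ge1$, which you dismiss with ``a comparison of order $\sqrt{n/k}$''. For $\alpha$ bounded below, $\rho_\alpha(S_{n,k})-\rho_\alpha(S_{n,k-1})$ is only $O_\eps(1)$. So the spectral contribution of edges inside $R=V\setminus L$ must be controlled to within a constant, and nothing in your outline does this. The paper proceeds as follows:
\begin{enumerate}
\item It shows $r\le\gamma k$.
\item Vertices of $R$ with at least two hub-neighbours cannot be joined by a path of order $2r+3$ in $H[R]$, since such a path would close a $C_{2k+2}$ through all $h$ hubs. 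The rooted lemma with $s=r+1$ then bounds the internal $R$-contribution.
\item Vertices with at most one hub-neighbour are controlled by switching their whole neighbourhood to $L$.
\item This yields the completion inequality $(\rho-a_h)(\rho-\alpha h-2r-1)\le\beta(\beta h+(1-\eta)r)(n-h)$, where $a_h=\alpha(n-1)+\beta(h-1)$. Its saving of order $\eta rn$ (all $R$-weights are below $1-\eta$) contradicts $\rho>\rho_\alpha(S_{n,k})$.
\end{enumerate}

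\textbf{The finishing step.} Adding one missing edge $\ell u$ can create a $C_{2k+2}$ when $u$ has neighbours in $R$; ``absorbed by the structure count'' is not an argument. Your claim that all non-hub weights are $O(\sqrt{k/n})$ is also unjustified. The safe operation is to replace all of $N(u)$ by $L$. Any new cycle passes $a\,u\,b$ with $a,b\in L$ and can be rerouted through one of the more than $2k+2$ common neighbours of $a$ and $b$. To see that this switch strictly raises the Rayleigh quotient, you first need two facts:
\begin{itemize}
\item no vertex of $R$ has two neighbours each adjacent to at least two hubs;
\item vertices with at most one hub-neighbour do not exist.
\end{itemize}
Then $H$ is a spanning subgraph of $S^+_{n,k}$, and strict monotonicity finishes; no separate proof that $L$ is a clique is needed. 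Your lower-bound construction is fine.
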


Taking $\alpha=0$ recovers the adjacency spectral radius. Therefore, for all sufficiently large $k$, Theorem~\ref{thm:main} answers Problem~\ref{prob:LN-threshold}: the required threshold has linear order in $k$. In particular, the optimal exponent is $\beta=1$ in this range. The assumption that $\alpha$ stays away from $1$ is needed for the uniqueness statement. When $\alpha=1$, we have $A_1(G)=D(G)$, and hence $\rho_1(G)$ is the maximum degree of $G$. Both $S^+_{n,k}$ and the star $K_{1,n-1}$ have maximum degree $n-1$. However, $K_{1,n-1}$ is $C_{2k+2}$-free and is not isomorphic to $S^+_{n,k}$. Thus the uniqueness conclusion of Theorem~\ref{thm:main} cannot hold at $\alpha=1$.

We now outline the proof. The key idea is a weighted rooted version of the Erd\H{o}s--Gallai path theorem. Roughly speaking, it says that if a graph has no path of order $2s+1$ whose two endpoints lie in a prescribed root set $U$, then the total weighted incidence from $U$ is bounded by a linear expression in the relevant total weights. The lemma is useful in two ways. With Perron-vector weights, it gives estimates for $\rho_\alpha(G)$; with indicator weights, it gives the corresponding unweighted structural estimates. The precise statement and proofs are given in Section~2.2.

The same method also gives two applications. First, applying Theorem~\ref{thm:main} with smaller parameters yields a spectral condition forcing all even cycle lengths in a prescribed interval. Second, the weighted rooted estimates developed in the proof yield asymptotically tight \(A_\alpha\)-spectral bounds for \((K_1\vee P_\ell)\)-free graphs and for \(F_s\)-free graphs, where \(F_s\) denotes the friendship graph. These applications are proved in Section~4.

The rest of the paper is organized as follows. Section~2 gives the $A_\alpha$ preliminaries, the comparison graphs, the weighted rooted Erd\H{o}s--Gallai lemmas, and the switching lemmas. In Section~3, we prove Theorem~\ref{thm:main}. Section~4 gives the applications to consecutive even cycles and to local-density spectral estimates.

\section{Preliminaries}

All graphs are finite and simple. For a graph $G$, let $V(G)$ and $E(G)$ denote its vertex set and edge set, and let $|G|=|V(G)|$. The order of a graph or subgraph refers to its number of vertices. For $X\subseteq V(G)$, let $G[X]$ be the subgraph of $G$ induced by $X$, and write $G-X=G[V(G)\setminus X]$. For $u\in V(G)$, let $N_G(u)$ be the neighborhood of $u$, and let $d_G(u)=|N_G(u)|$. If $X\subseteq V(G)$ and $u\in V(G)$, let $d_X(u)=|N_G(u)\cap X|$. When the graph is clear, we omit the subscript $G$.

For $X\subseteq V(G)$, let $e(X)=e(G[X])$. If $X,Y\subseteq V(G)$ are disjoint, let $e(X,Y)$ be the number of edges with one endpoint in $X$ and the other in $Y$. For any $X,Y\subseteq V(G)$, write $I(X,Y)=\sum_{x\in X}d_Y(x)$. Then $I(X,Y)=e(X,Y)$ when $X$ and $Y$ are disjoint, and $I(X,X)=2e(X)$.

We also use some notation for weights and vectors. If $f$ is a nonnegative function on a finite set $\Omega$, let $\|f\|_\infty=\max_{\omega\in\Omega}f(\omega)$. If $v\in\mathbb R^{V(G)}$ and $X\subseteq V(G)$, then $v_X$ denotes the restriction of $v$ to $X$. We write $\mathbf 1_X$ for the all-one vector on $X$.

We use the notation $0<a_1\ll_\eps a_2\ll_\eps\cdots\ll_\eps a_m\ll_\eps 1$ in the usual way: the constants are chosen from right to left, each small enough in terms of $\eps$ and the constants to its right. After these constants are fixed, $C_\eps$ and $k_\eps$ are chosen large enough. The symbols $K_\eps,K_\eps',K_\eps''$ denote positive constants depending only on $\eps$; their values may change from line to line.

Throughout the proof, the constants are chosen in the order $0<\mu\ll_\eps \tau\ll_\eps \eta\ll_\eps \gamma\ll_\eps \xi\ll_\eps 1$, $D_\eps\gg_\eps 1$, $C_\eps\gg_\eps D_\eps$, and $k_\eps\gg_\eps 1$. We use this only in the following simple ways. After $\mu,\tau,\eta,\gamma,\xi$ are fixed, we choose $D_\eps$, $C_\eps$, and $k_\eps$ so that, whenever $1\le r\le \gamma k$, $n\ge C_\eps k$, and $k\ge k_\eps$, we have
\begin{flalign*}
    \begin{cases}
        K_\eps\mu kn \le \eta\gamma kn,\\
        K_\eps\tau n \le \eta n,\\
        K_\eps\eta kn \le \xi kn,\\
        K_\eps k\sqrt{kn} \le \xi kn,\\
        K_\eps k^2 \le \xi kn,\\
        K_\eps r\sqrt{kn}+K_\eps r^2+K_\eps r \le \dfrac{\eta rn}{4}.
\end{cases}
\end{flalign*}
We shall use the bound $W(v)\le \mu n$ only after Lemma~\ref{lem:mass-alpha} has been applied.

\subsection{\texorpdfstring{$A_\alpha$}{A-alpha}-estimates for comparison graphs}

In this subsection we collect the spectral estimates used later. Throughout, put $\beta=1-\alpha$. In particular, if $0\le \alpha\le 1-\eps$, then $\beta\ge \eps$. For an edge $uv$ and a vector $x$, define
$
        \Phi_\alpha(x_u,x_v):=\alpha(x_u^2+x_v^2)+2\beta x_ux_v.
$
Then
$
        x^TA_\alpha(G)x=\sum_{uv\in E(G)}\Phi_\alpha(x_u,x_v).
$
We shall use the following two basic facts without proofs.

\begin{lemma}
\label{lem:matrix}
Let $M$ be a nonnegative symmetric matrix, let $y$ be a nonnegative nonzero vector, and let $c>0$. If $My\ge cy$ entrywise, then $\lambda(M)\ge c$.
\end{lemma}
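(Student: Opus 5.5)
The plan is to use the Rayleigh quotient together with a nonnegative eigenvector, so that the case where $y$ is itself the Perron vector is avoided. Since $M$ is real symmetric, its largest eigenvalue satisfies
\[
\lambda(M)=\max_{z\neq 0}\frac{z^TMz}{z^Tz}.
\]
Here $\lambda(M)$ is understood as the largest eigenvalue. For a nonnegative symmetric matrix this coincides with the spectral radius by Perron--Frobenius, but the argument only needs the variational characterization.

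First, take $z=y$. Because $y\ge 0$ and $My\ge cy$ entrywise, multiplying coordinatewise by $y_i\ge 0$ and summing gives
\[
y^TMy=\sum_i y_i(My)_i\ge c\sum_i y_i^2=c\,y^Ty .
\]
Since $y\neq 0$, we have $y^Ty>0$. Dividing, the Rayleigh quotient of $y$ is at least $c$, and hence $\lambda(M)\ge c$.

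The only point needing care is the nonnegativity of $y$, which lets the entrywise inequality be multiplied by $y_i$ without reversing it. Positivity of $c$ is not really needed; the same argument works for any real $c$. There is no real obstacle. An alternative route would be to pair $My\ge cy$ with a nonnegative Perron eigenvector $x$ of $M$ and use $x^TMy=\lambda x^Ty$. That route, however, requires $x^Ty>0$, which can fail when $M$ is reducible. So I would use the Rayleigh quotient argument above, which avoids the irreducibility issue entirely.
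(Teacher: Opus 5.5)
Your proof is correct and complete. Since $y\ge 0$, the entrywise inequality $My\ge cy$ gives $y^TMy\ge c\,y^Ty$. The variational characterization of the largest eigenvalue of the real symmetric matrix $M$ then yields $\lambda(M)\ge c$; for $M\ge 0$ this eigenvalue equals the spectral radius.

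The paper gives no proof of this lemma; it lists it among the ``basic facts without proofs,'' and your Rayleigh-quotient argument is the standard justification. It also covers both places the paper uses the lemma: the comparison $A_\alpha(S_{m,k})\ge\beta A(S_{m,k})$ at the Perron vector of $A(S_{m,k})$, and the inequality $A_\alpha(H[Z])v_Z\ge(1-\xi)\rho v_Z$. Your caution that pairing with a Perron vector can fail when $M$ is reducible is accurate.
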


\begin{lemma}
\label{lem:strict-monotonicity}
Let $0\le\alpha<1$. If $G$ is a proper subgraph of a connected graph $H$ on the same vertex set, then $\rho_\alpha(G)<\rho_\alpha(H)$.
\end{lemma}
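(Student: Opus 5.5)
We prove the strict monotonicity of \cref{lem:strict-monotonicity} by a Perron--Frobenius argument combined with the Rayleigh quotient. Since $H$ is connected and $0\le\alpha<1$, the matrix $A_\alpha(H)=\alpha D(H)+(1-\alpha)A(H)$ is nonnegative, symmetric and irreducible. Irreducibility holds because its off-diagonal support is exactly the edge set of $H$ (here $1-\alpha>0$). By Perron--Frobenius, $A_\alpha(H)$ has a unique unit eigenvector $x$ for $\rho_\alpha(H)$, and all entries of $x$ are strictly positive. This uniqueness is the key tool.

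For the comparison, let $y$ be a nonnegative unit Perron vector of $A_\alpha(G)$. The graph $G$ need not be connected, but $A_\alpha(G)$ is still a nonnegative symmetric matrix, so such a $y$ exists. Using the edge decomposition $z^TA_\alpha(\cdot)z=\sum_{uv\in E}\Phi_\alpha(z_u,z_v)$, and noting that $\Phi_\alpha(a,b)\ge0$ for $a,b\ge0$, we get
\[
\rho_\alpha(G)=y^TA_\alpha(G)y\le y^TA_\alpha(G)y+\sum_{uv\in E(H)\setminus E(G)}\Phi_\alpha(y_u,y_v)=y^TA_\alpha(H)y\le\rho_\alpha(H).
\]
The last step is the Rayleigh principle.

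It remains to rule out equality. If $\rho_\alpha(G)=\rho_\alpha(H)$, then both inequalities are equalities. Equality in the Rayleigh step forces $y$ to be an eigenvector of $A_\alpha(H)$ for $\rho_\alpha(H)$. By uniqueness $y=x$, so $y>0$ entrywise. But $E(H)\setminus E(G)$ contains some edge $uv$, since $G$ is a proper spanning subgraph. Then $\Phi_\alpha(y_u,y_v)\ge 2(1-\alpha)y_uy_v>0$, so the first inequality is strict, a contradiction.

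The only delicate point is that $G$ may be disconnected, so we cannot assume $y$ is positive from the start. The argument avoids this by deriving positivity of $y$ from the equality case through the irreducibility of $A_\alpha(H)$, which is exactly where $\alpha<1$ is used.
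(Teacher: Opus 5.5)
Your proof is correct and complete. The paper gives no argument for this lemma. It is listed among the ``basic facts'' used without proof, and is later invoked as ``strict Perron--Frobenius monotonicity for $A_\alpha$''.

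The fact being cited is the general Perron--Frobenius comparison theorem: if $0\le A\le B$ entrywise, $A\ne B$ and $B$ is irreducible, then $\rho(A)<\rho(B)$. It applies here because $A_\alpha(G)\le A_\alpha(H)$, the two matrices differ at the entries indexed by any missing edge, and $A_\alpha(H)$ is irreducible when $\alpha<1$.

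Your route is the symmetric specialization of that theorem. You carry it out with the Rayleigh quotient and the edge decomposition $z^TA_\alpha z=\sum_{uv}\Phi_\alpha(z_u,z_v)$, which is the same identity behind the paper's switching lemma (Lemma~\ref{lem:aalpha-switch}). This keeps the argument self-contained within the paper's framework. It also shows exactly where $\alpha<1$ enters: irreducibility of $A_\alpha(H)$ and positivity of $2(1-\alpha)y_uy_v$. The general citation buys validity for non-symmetric matrices, which is not needed here.

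One wording point: a unit eigenvector is unique only up to sign. What you actually use is that the $\rho_\alpha(H)$-eigenspace is one-dimensional, so the nonnegative unit vector $y$ must equal $x$. You could even bypass simplicity. Any nonzero nonnegative eigenvector $y$ of the irreducible matrix $M=A_\alpha(H)$ is automatically positive, since $(I+M)^{|V(H)|-1}$ is entrywise positive and $(I+M)^{|V(H)|-1}y=(1+\rho_\alpha(H))^{|V(H)|-1}y$.
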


The second lemma is used twice: first to take an extremal graph to be connected, and later to exclude proper spanning subgraphs of $S^+_{n,k}$. We begin with the comparison graph $S_{n,k}$.

\begin{lemma}
\label{lem:comparison-equation}
For $1\le j\le n$, let $a_j=\alpha(n-1)+\beta(j-1)$ and $b_j=\alpha j$. Then $\rho_\alpha(S_{n,k})$ is the larger root of
$
        (t-a_k)(t-b_k)=\beta^2 k(n-k).
$
Moreover, if $\alpha<1$, then $\rho_\alpha(S^+_{n,k})>\rho_\alpha(S_{n,k})$.
\end{lemma}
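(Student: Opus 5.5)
The plan is to use the equitable partition $V(S_{n,k})=L\cup R$ with $|L|=k$ (the clique) and $|R|=n-k$ (the independent set). By symmetry the Perron vector of $A_\alpha(S_{n,k})$ is constant on each part; say it equals $x$ on $L$ and $y$ on $R$. Every vertex of $L$ has degree $n-1$, with $k-1$ neighbours in $L$ and $n-k$ in $R$. Every vertex of $R$ has degree $k$, all of them in $L$. The eigen-equations at these two kinds of vertices are therefore
\[
\rho x=\alpha(n-1)x+\beta(k-1)x+\beta(n-k)y=a_kx+\beta(n-k)y,\qquad \rho y=\alpha k\,y+\beta k\,x=b_ky+\beta kx .
\]
Hence $\rho=\rho_\alpha(S_{n,k})$ is an eigenvalue of the $2\times2$ quotient matrix
\[
Q=\begin{pmatrix} a_k & \beta(n-k)\\ \beta k & b_k\end{pmatrix}.
\]
Its characteristic polynomial is $(t-a_k)(t-b_k)-\beta^2k(n-k)$. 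To see that $\rho$ is the larger root, I would use the standard fact that the quotient matrix of an equitable partition has nonnegative entries and that its Perron root equals the spectral radius. Equivalently, I would lift the positive Perron eigenvector $(x,y)$ of $Q$ to a positive eigenvector of $A_\alpha(S_{n,k})$. A positive eigenvector of a nonnegative irreducible matrix belongs to the spectral radius, so $\rho_\alpha(S_{n,k})$ equals the larger root. When $\beta>0$ we have $\beta^2k(n-k)>0$, so the larger root lies strictly above $\max(a_k,b_k)$; this is consistent with the positivity of the eigenvector.

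For the strict inequality, $S_{n,k}$ is a proper spanning subgraph of $S^+_{n,k}$, and $S^+_{n,k}$ is connected because $k\ge1$ and every vertex of $R$ is joined to $L$. Since $\alpha<1$, Lemma~\ref{lem:strict-monotonicity} immediately gives $\rho_\alpha(S_{n,k})<\rho_\alpha(S^+_{n,k})$. This step needs $n-k\ge2$ so that an edge inside $R$ exists, which is implicit in the definition of $S^+_{n,k}$.

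No step is a real obstacle. The only point needing care is justifying that the larger root of the quotient polynomial is indeed $\rho_\alpha$ rather than some other eigenvalue. The positive-eigenvector argument above (Perron--Frobenius for the irreducible matrix $A_\alpha(S_{n,k})$ when $\beta>0$) handles this cleanly. The degenerate case $\alpha=1$ needs no separate treatment: then $\beta=0$, the equation reads $(t-(n-1))(t-k)=0$, and its larger root $n-1$ is the maximum degree.
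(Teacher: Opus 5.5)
Your proposal is correct and follows essentially the same route as the paper: the equitable partition $L\cup R$, the $2\times 2$ quotient matrix $Q$ with characteristic polynomial $(t-a_k)(t-b_k)-\beta^2k(n-k)$, and strict monotonicity (Lemma~\ref{lem:strict-monotonicity}) for $\rho_\alpha(S^+_{n,k})>\rho_\alpha(S_{n,k})$. You are more careful than the paper's one-line ``by symmetry'' in justifying that the larger root is the spectral radius, by lifting the positive Perron vector of $Q$, and in treating the degenerate case $\alpha=1$, where $A_1=D$ is reducible and the symmetry argument needs that separate check.
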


\begin{proof}
Let $L$ be the clique of $S_{n,k}$, and let $R$ be its independent part. By symmetry, the Perron vector of $A_\alpha(S_{n,k})$ is constant on $L$ and on $R$. On such vectors, the quotient matrix is
$$
        Q=
        \begin{pmatrix}
        a_k & \beta(n-k)\\
        \beta k & b_k
        \end{pmatrix}.
$$
Hence, $\rho_\alpha(S_{n,k})$ is the largest eigenvalue of $Q$. Therefore it is the larger root of
$
        \det(tI-Q)=(t-a_k)(t-b_k)-\beta^2k(n-k)=0.
$
Finally, $S_{n,k}$ is a proper spanning subgraph of the connected graph $S^+_{n,k}$. Since $\alpha<1$, by Lemma~\ref{lem:strict-monotonicity}, we have $\rho_\alpha(S^+_{n,k})>\rho_\alpha(S_{n,k})$.
\end{proof}

We shall use the following consequences of the last equation.

\begin{lemma}
\label{lem:comparison-gaps}
Let $\rho_S=\rho_\alpha(S_{n,k})$. For every $\eps>0$, there are constants $c_\eps,K_\eps>0$ such that if $0\le\alpha\le1-\eps$ and $n\ge 4k$, then
$
        0\le \rho_S-a_k\le K_\eps\sqrt{kn}
$
and
$
        \rho_S-b_k\ge c_\eps\sqrt{kn}.
$
Moreover, if $a_h=a_k-\beta r$ for some $0\le r\le k$, then
$
        \rho_S-a_h\le K_\eps(k+\sqrt{kn}).
$
\end{lemma}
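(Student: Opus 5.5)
The plan is to work directly with the quadratic from Lemma~\ref{lem:comparison-equation}. Write $f(t)=(t-a_k)(t-b_k)-\beta^2k(n-k)$, so $\rho_S$ is the larger root of $f$. First note $a_k-b_k=\alpha(n-1-k)+\beta(k-1)\ge 0$ when $n\ge 4k$. Then $f(a_k)=-\beta^2k(n-k)\le 0$ and $f(b_k)\le 0$, while $f(t)\to\infty$. Hence $\rho_S\ge a_k\ge b_k$, which gives the lower bound $\rho_S-a_k\ge 0$.

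Put $x=\rho_S-a_k\ge 0$ and $\Delta=a_k-b_k\ge 0$. The equation becomes
\[
x(x+\Delta)=\beta^2k(n-k).
\]
Since $x+\Delta\ge x$, we get $x^2\le \beta^2 k(n-k)\le kn$, so $x\le\sqrt{kn}$. This proves the first upper bound with $K_\eps=1$.

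For the second bound, set $y=\rho_S-b_k=x+\Delta$. Then $y\ge x$, so
\[
y^2\ge xy=\beta^2k(n-k)\ge \eps^2 k\cdot\tfrac34 n,
\]
using $\beta\ge\eps$ and $n\ge 4k$. This gives $y\ge c_\eps\sqrt{kn}$ with $c_\eps=\eps\sqrt3/2$. It is the only place where $\beta\ge\eps$ is needed, and that is why the constant depends on $\eps$.

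For the last claim, $\rho_S-a_h=(\rho_S-a_k)+\beta r\le\sqrt{kn}+k$, since $\beta\le1$ and $r\le k$. So $K_\eps=1$ suffices here as well. No step is a real obstacle. The only care needed is to check $a_k\ge b_k$, so that $\rho_S$ is identified as lying above both $a_k$ and $b_k$, and to use $n-k\ge 3n/4$ in the lower bound.
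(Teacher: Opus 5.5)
Your proposal is correct and follows the paper's proof: set $X=\rho_S-a_k$ and $Y=\rho_S-b_k$, use $XY=\beta^2k(n-k)$ and $0\le X\le Y$ to get $X\le\beta\sqrt{k(n-k)}\le Y$, and handle the last bound via $\rho_S-a_h=X+\beta r$. Your one addition is an explicit justification of $\rho_S\ge a_k$ from $f(a_k)\le0$, which the paper only asserts.
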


\begin{proof}
Let $X=\rho_S-a_k$ and $Y=\rho_S-b_k$. By Lemma~\ref{lem:comparison-equation}, we have $XY=\beta^2k(n-k)$. Since
$
        a_k-b_k=\alpha(n-k-1)+\beta(k-1)\ge0,
$
we have $0\le X\le Y$. Hence
$
        X\le \sqrt{XY}=\beta\sqrt{k(n-k)}\le\sqrt{kn}.
$
Also, since $n\ge4k$ and $\beta\ge\eps$,
$
        Y\ge \sqrt{XY}=\beta\sqrt{k(n-k)}
        \ge \frac{\sqrt3}{2}\eps\sqrt{kn}.
$
Thus the first two estimates hold with $c_\eps=\sqrt3\,\eps/2$ and $K_\eps\ge1$. Finally,
$
        \rho_S-a_h=(\rho_S-a_k)+\beta r\le K_\eps\sqrt{kn}+k,
$
and the last estimate follows after increasing $K_\eps$.
\end{proof}

We also need the following spectral bound for even-cycle-free graphs.

\begin{lemma}[Nikiforov~\citep{Nikiforov2010}]
\label{Nikiforov2010Evencycle}
Let $F$ be a $C_{2k+2}$-free graph on $m$ vertices. Then $\lambda(F)\le \sqrt{2k(m-1)}$.
\end{lemma}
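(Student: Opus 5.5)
The statement is Nikiforov's bound $\lambda(F)\le\sqrt{2k(m-1)}$, which the paper cites from~\citep{Nikiforov2010}. I would prove it by bounding walks of length two, which reduces the spectral bound to a counting problem about paths. Let $x$ be a unit Perron vector of $A(F)$ and $\lambda=\lambda(F)$. We may assume $F$ is connected; otherwise we work in a component on $m'\le m$ vertices, and the bound only improves.

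Fix a vertex $u$. From $\lambda^2x_u=\sum_{v\sim u}\sum_{w\sim v}x_w$, the double sum counts walks $u\!-\!v\!-\!w$. Splitting off the walks with $w=u$ gives
\[
\lambda^2x_u=d(u)x_u+\sum_{w\ne u}|N(u)\cap N(w)|\,x_w .
\]
Equivalently, $\lambda^2x_u=\sum_{v\in N(u)}\sum_{w\in N(v)}x_w$. Let $G_u$ be the bipartite-type graph formed by the edges between $N(u)$ and $N(N(u))\cup N(u)$. This is the ``second neighbourhood'' structure.

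The combinatorial input is as follows. In $F-u$, there is no path with $2k$ vertices (i.e.\ $2k-1$ edges) whose two endpoints both lie in $N(u)$, since closing such a path through $u$ gives a $C_{2k+1}$. That is the odd case. For $C_{2k+2}$ the relevant object is a path on $2k+1$ vertices with endpoints in $N(u)$. A rooted Erd\H{o}s--Gallai bound (e.g.\ in the form of Lemma~2.2-style rooted path lemmas, or the classical bound that a graph with no path on $2k+1$ vertices has average degree below $2k$) then says that the number of edges of $F-u$ incident to $N(u)$ is at most roughly $k\,(m-1)$, counting each such edge with the appropriate multiplicity.

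Plugging in with Perron weights is where the real work lies. Summing the identity over $u$ with weight $x_u$ gives
\[
\lambda^2=\sum_{v}d(v)\Bigl(\sum_{w\sim v}x_w\Bigr)\!\big/\cdots
\]
This bookkeeping is messy. I would instead follow Nikiforov's actual route: choose $u$ with $x_u$ maximal. Then
\[
\lambda^2x_u\le x_u\bigl(d(u)+\#\{\text{edges from }N(u)\text{ to }V\setminus\{u\}\text{ counted with multiplicity}\}\bigr),
\]
and it suffices to show that the count in parentheses is at most $2k(m-1)$. The diagonal walks contribute $d(u)\le m-1$. The remaining walks $u v w$, $w\ne u$, number $\sum_{v\in N(u)}(d(v)-1)=I(N(u),V\setminus\{u\})$. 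This equals $2e(N(u))+e(N(u),V\setminus N[u])$, and $C_{2k+2}$-freeness should bound it by $(2k-1)(m-1)$. To see this, form the graph $H$ on $V\setminus\{u\}$ consisting of the edges touching $N(u)$. A path in $H$ on $2k+1$ vertices with both endpoints in $N(u)$ closes with $u$ into a $C_{2k+2}$. A rooted Erd\H{o}s--Gallai statement, namely that if no path on $2s+1$ vertices has both ends in the root set $U$ then $I(U,V)\le (2s-1)|V|$ (this is the unweighted, indicator-weight case of the paper's lemma), then gives the bound with $s=k$.

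The main obstacle is proving this rooted Erd\H{o}s--Gallai estimate with the sharp constant $2k-1$. My first attempt would be induction on $|V|$ by deleting a vertex of small $U$-degree. Failing that, I would take a longest $U$-rooted path and use Pósa-type rotation arguments to bound the $U$-degrees of its endpoints, mimicking the Erd\H{o}s--Gallai proof. Since the lemma is cited from Nikiforov, the fallback is simply to quote~\citep{Nikiforov2010}.
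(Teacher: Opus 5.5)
Your final route is correct, and it is the standard argument behind the bound; the paper itself gives no proof and only cites Nikiforov. For $u$ maximizing $x_u$ you get $\lambda^2\le\sum_{w\in N(u)}d(w)=d(u)+\sum_{w\in N(u)}(d(w)-1)$. Since $F-u$ has no path of order $2k+1$ with both ends in $N(u)$, Lemma~\ref{lem:rooted-set} applied in $F-u$ with $U=N(u)$ and $T=V(F)\setminus\{u\}$ bounds the second sum by $(k-1)d(u)+k(m-1)$. Hence $\lambda^2\le k\,d(u)+k(m-1)\le 2k(m-1)$; Lemma~\ref{lem:nik-path} gives the same bound via $2k|U|+k|W|$.

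So the rooted estimate you call the main obstacle needs no new proof. You do not need its sharp form, and Lemma~\ref{lem:rooted-set} is proved in the paper without spectral input, so there is no circularity. The earlier detours (the odd-cycle remark and the abandoned weighted summation) should simply be deleted.
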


\begin{lemma}
\label{lem:spectral-upper-alpha}
Let $F$ be a $C_{2k+2}$-free graph on $m$ vertices. Then
$
        \rho_\alpha(F)\le \alpha(m-1)+\beta\sqrt{2k(m-1)}.
$
Moreover, if $k\ge2$ and $m\ge4k$, then
$
        \rho_\alpha(S_{m,k})\ge \beta\sqrt{km}.
$
\end{lemma}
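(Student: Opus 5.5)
For the upper bound I would compare the quadratic form of $A_\alpha(F)$ with those of $D(F)$ and $A(F)$. Let $x$ be a unit Perron vector of $A_\alpha(F)$. Then
$$
\rho_\alpha(F)=x^TA_\alpha(F)x=\alpha\, x^TD(F)x+\beta\, x^TA(F)x .
$$
First, $x^TD(F)x=\sum_u d(u)x_u^2\le \Delta(F)\le m-1$. Second, $x^TA(F)x\le\lambda(F)\le\sqrt{2k(m-1)}$ by Lemma~\ref{Nikiforov2010Evencycle}. Since $\alpha,\beta\ge0$, these two bounds give $\rho_\alpha(F)\le\alpha(m-1)+\beta\sqrt{2k(m-1)}$ immediately.

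For the lower bound I would use Lemma~\ref{lem:comparison-equation} with $n$ replaced by $m$. Set $X=\rho_S-a_k$ and $Y=\rho_S-b_k$, where $\rho_S=\rho_\alpha(S_{m,k})$. As in the proof of Lemma~\ref{lem:comparison-gaps},
$$
XY=\beta^2k(m-k),\qquad a_k\ge b_k,\qquad X\ge0 .
$$
The aim is $\rho_S\ge\beta\sqrt{km}$.

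Since $\rho_S$ is the larger root of $(t-a_k)(t-b_k)=\beta^2k(m-k)$, we have $\rho_S\ge a_k$. Here
$$
a_k=\alpha(m-1)+\beta(k-1),\qquad \rho_S=\tfrac12\Bigl(a_k+b_k+\sqrt{(a_k-b_k)^2+4\beta^2k(m-k)}\Bigr).
$$
Dropping the nonnegative terms $a_k$ and $b_k$, and using $(a_k-b_k)^2\ge0$, gives
$$
\rho_S\ge \beta\sqrt{k(m-k)} .
$$
This is slightly less than $\beta\sqrt{km}$, so the remaining task is to recover the deficit.

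To do that I would keep the terms just dropped:
$$
\rho_S\ge \tfrac12\bigl(a_k+b_k\bigr)+\beta\sqrt{k(m-k)}\ge \tfrac12\beta(k-1)+\beta\sqrt{k(m-k)} .
$$
It then suffices to show $\sqrt{km}-\sqrt{k(m-k)}\le (k-1)/2$. The left side equals
$$
\frac{k^2}{\sqrt{km}+\sqrt{k(m-k)}}\le \frac{k^2}{2\sqrt{k(m-k)}} .
$$
With $m\ge4k$ we get $\sqrt{k(m-k)}\ge\sqrt3\,k$, so the left side is at most $k/(2\sqrt3)\approx0.289k$. This is at most $(k-1)/2$ whenever $k\ge2$, since $(k-1)/2\ge k/4$ for $k\ge2$ and $k/4\le0.289k$ fails only in the other direction; so one must check the comparison directly. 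For $k\ge2$ we have $k/(2\sqrt3)\le (k-1)/2$ exactly when $k(1-1/\sqrt3)\ge1$, that is $k\ge 2.37$. So $k\ge3$ is fine.

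The case $k=2$ is the only delicate point. There I would also keep $\alpha$ and use the full $a_k$ rather than only its $\beta$-part. If $\alpha=0$, the equation becomes $t(t-1)=2(m-2)$, and a direct check shows $\rho_S\ge\sqrt{2m}$ for $m\ge8$. For $\alpha>0$ the extra term $\alpha(m-1)/2$ dominates easily. So the main obstacle is this small-$k$ bookkeeping, and I expect it to close with these direct computations.
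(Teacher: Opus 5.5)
Your upper bound is the paper's argument. For the lower bound you work from the explicit root formula rather than the paper's reduction, and your $k\ge3$ case and your $k=2$, $\alpha=0$ case go through as you describe.

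\textbf{The gap.} The remaining case $k=2$, $\alpha>0$ does not close. Your claim that the term $\alpha(m-1)/2$ ``dominates easily'' is false for small $\alpha$.

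Take $m=8$. Your bound $\rho_S\ge\frac12(a_2+b_2)+\beta\sqrt{2(m-2)}$ reads $\frac12+2\sqrt3+(4-2\sqrt3)\alpha$, while the target is $\beta\sqrt{16}=4-4\alpha$. Since $\frac12+2\sqrt3\approx3.964<4$, the bound fails for every $0<\alpha<(\frac72-2\sqrt3)/(8-2\sqrt3)\approx0.008$. The same happens, in a smaller window, at $m=9$.

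No bookkeeping in $\alpha$ can rescue this estimate. At $\alpha=0$, $k=2$, $m=8$ the target is attained with equality, since $\lambda(S_{8,2})=4=\sqrt{km}$. Discarding $(a_k-b_k)^2\ge\beta^2(k-1)^2$ under the square root loses a fixed amount of order $\beta$. The $\alpha$-terms you kept are only $O(\alpha)$ and cannot make up for it near $\alpha=0$.

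\textbf{The fix.} Do not discard that term. Since $a_k-b_k=\alpha(m-k-1)+\beta(k-1)\ge\beta(k-1)\ge0$ and $a_k+b_k\ge\beta(k-1)$, your formula gives
\[
\rho_S\ge\tfrac{\beta}{2}\Bigl(k-1+\sqrt{(k-1)^2+4k(m-k)}\Bigr)=\beta\,\lambda(S_{m,k}).
\]
The paper gets this inequality in one line: $A_\alpha(S_{m,k})\ge\beta A(S_{m,k})$ entrywise, so Lemma~\ref{lem:matrix} applies with the Perron vector of $A(S_{m,k})$.

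It then remains to show $\lambda(S_{m,k})\ge\sqrt{km}$. After squaring, this is equivalent to $(k-1)\sqrt{km}\ge k^2$. That follows from $\sqrt{km}\ge2k$ and $2(k-1)\ge k$ for every $k\ge2$. This handles all $k\ge2$ and all $\alpha$ uniformly, with no case split and no separate small-$k$ computation.
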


\begin{proof}
By Lemma~\ref{Nikiforov2010Evencycle}, we have $\lambda(F)\le \sqrt{2k(m-1)}$. For any unit vector $x$,
\begin{flalign*}
        x^TA_\alpha(F)x
        =\alpha x^TD(F)x+\beta x^TA(F)x 
        \le \alpha\Delta(F)+\beta\lambda(F) 
        \le \alpha(m-1)+\beta\sqrt{2k(m-1)}. 
\end{flalign*}
Taking the maximum over all unit vectors $x$ proves the first bound.

Now we prove the second bound. Since $A_\alpha(S_{m,k})\ge \beta A(S_{m,k})$ entrywise, by Lemma~\ref{lem:matrix}, we have
$
        \rho_\alpha(S_{m,k})\ge \beta\lambda(S_{m,k}).
$
The adjacency spectral radius of $S_{m,k}$ is the larger root of
$
        t^2-(k-1)t-k(m-k)=0.
$
Hence
$
        \lambda(S_{m,k})
        =\frac{k-1+\sqrt{(k-1)^2+4k(m-k)}}{2}.
$
We claim that $\lambda(S_{m,k})\ge\sqrt{km}$ when $k\ge2$ and $m\ge4k$. Since $2\sqrt{km}-(k-1)>0$, this is equivalent to
$
        (k-1)^2+4k(m-k)
        \ge \bigl(2\sqrt{km}-(k-1)\bigr)^2.
$
Hence we have
$
        (k-1)\sqrt{km}\ge k^2.
$
This follows from $m\ge4k$, since
$
        (k-1)\sqrt{km}\ge 2k(k-1)\ge k^2
$
for every $k\ge2$. Therefore
$
        \rho_\alpha(S_{m,k})
        \ge \beta\lambda(S_{m,k})
        \ge \beta\sqrt{km}.
$
\end{proof}

We shall also use the following edge bound.

\begin{lemma}[Verstra\"ete~\citep{Verstraete2000}]
\label{lem:even-circuit}
For every $k\ge1$, $\ex(n,C_{2k+2})\le 8k n^{1+1/(k+1)}$.
\end{lemma}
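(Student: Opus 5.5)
This is Verstra\"ete's theorem~\citep{Verstraete2000}, and I would follow the breadth-first-search expansion scheme of Bondy--Simonovits as refined there. Let $G$ be $C_{2k+2}$-free with $e(G)>8kn^{1+1/(k+1)}$. I first reduce to a convenient host graph:
\begin{itemize}
\item pass to a bipartite subgraph keeping at least half the edges;
\item then repeatedly delete vertices of degree below half the current average degree;
\item this leaves a nonempty bipartite subgraph $H$ with minimum degree $\delta\ge d$, where $d\ge 4kn^{1/(k+1)}$ up to the absolute constant (to be tracked).
\end{itemize}
Fix a root $v$ and let $L_0=\{v\},L_1,L_2,\dots$ be the BFS layers of $H$. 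Since $H$ is bipartite, each $L_i$ is independent and all edges go between consecutive layers.

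The core is an expansion claim: for every $1\le i\le k$ we have $|L_{i+1}|\ge \frac{\delta}{c k}|L_i|$ for a suitable absolute constant $c$. Granting it, $n\ge |L_{k+1}|\ge (\delta/ck)^{k}\,|L_1|\ge(\delta/ck)^k\delta$, so $\delta\le ck\,n^{1/(k+1)}$. Choosing the constants consistently with $8k$ then gives a contradiction.

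To prove the claim, suppose some layer $i\le k$ fails to expand. Then the bipartite graph $H_i=H[L_i\cup L_{i+1}]$ has average degree at least of order $\delta$ relative to $|L_i|$: each vertex of $L_i$ sends at most one edge back to $L_{i-1}$ (after pruning to a BFS tree, counting only non-tree edges to $L_{i-1}$ as needed), so almost all its $\delta$ edges go forward into the small set $L_{i+1}$. This forces a subgraph $F\subseteq H_i$ of average degree at least $2k$ (in fact at least $4k$, with room to spare).

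Next comes the key combinatorial step, which I expect to be the main obstacle. I would use Verstra\"ete's path lemma: if $F$ is a bipartite graph of average degree at least $4k$ and $V(F)=A\cup B$ is a nontrivial partition for which some edge of $F$ joins $A$ to $B$, then $F$ contains $A$--$B$ paths of every even length up to $2k$, or of every odd length up to $2k$, according to parity. I would prove it by induction via a minimal subgraph of high minimum degree, taking a longest path and using the many chords at its endpoint. The chords yield a $\theta$-like structure in which an $A$--$B$ path must switch sides, and this produces all the required lengths.

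The partition is obtained by walking back in the BFS tree. Take the least common ancestor level $j$ of the vertices of $F\cap L_i$, and split them into $A$ and $B$ according to which child branch at level $j+1$ they descend from. Balance this split (possibly passing to a subgraph of $F$ that keeps half its edges) so that it is nontrivial. An $A$--$B$ path in $F$ of length $\ell=2k+2-2(i-j)$ then closes, through the two disjoint tree paths to the common ancestor, into a cycle of length exactly $2k+2$. Parity is automatic since $H$ is bipartite, and $2\le \ell\le 2k$ holds because $1\le i-j\le k$. This contradicts $C_{2k+2}$-freeness.

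The delicate bookkeeping lies in two places: ensuring that the ancestor split leaves $F$ with enough edges crossing the partition, and tracking the constants so that they yield exactly $8k$. I would handle the first by choosing $j$ maximal, so that at least two branches are used, and grouping the branches so that each side of the partition carries a constant fraction of the edges.
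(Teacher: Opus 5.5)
The paper gives no proof of this lemma; it quotes Verstra\"ete's theorem. So the question is whether your reconstruction stands on its own. The architecture is the standard one, and your closing step is correct: an $A$--$B$ path of length $2k+2-2(i-j)$ with \emph{both} ends in $L_i$, together with the two tree paths to the branching vertex at level $j$, is a $C_{2k+2}$.

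\textbf{Producing that path is where the argument breaks.}
\begin{itemize}
\item Your sets $A,B$ partition only $V(F)\cap L_i$, and $L_i$ is independent. So the path lemma as you state it (a partition of all of $V(F)$ with some $A$--$B$ edge) does not apply.
\item Suppose you extend the partition to $V(F)\cap L_{i+1}$, say by putting those vertices into $B$. Then the alternative ``$A$--$B$ paths of every odd length'' is easily satisfied, and it only gives paths ending in $L_{i+1}$. These cannot be closed up: the tree path from an $L_{i+1}$-endpoint runs through its parent in $L_i$, and that parent may lie on the path.
\item ``Parity is automatic'' only says that a path with both ends in $L_i$ is even. It does not say such a path exists.
\end{itemize}

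What you actually need is a one-sided lemma. If $F$ is a connected bipartite graph with classes $X=V(F)\cap L_i$ and $Y$, of large average degree, and $X=A\sqcup B$ with $A,B\neq\emptyset$, then $F$ contains $A$--$B$ paths of every even length $2,4,\dots,2k$.

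That lemma is where the real work lies, and your sketch does not supply it. Two things must be handled:
\begin{itemize}
\item Periodic labellings must be broken, which is what the chord in the $\theta$-graph does. For example, on an $8$-cycle whose $X$-vertices are labelled $A,B,A,B$ in order there is no $A$--$B$ path of length $4$.
\item The high-minimum-degree part containing the $\theta$-graph may have all its $X$-vertices on one side. You then have to connect to the other side while still obtaining every length from $2$ upward.
\end{itemize}
Also, ``balancing'' by discarding half the edges is not the right nontriviality condition. Instead, pass to a connected component of $F$ of maximum average degree before taking the least common ancestor; then some $Y$-vertex has neighbours in both $A$ and $B$.

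\textbf{Back edges.} A vertex of $L_i$ can have many neighbours in $L_{i-1}$, so the claim that it sends at most one edge back is false. Back edges must be bounded inductively, using that $H[L_{i-1}\cup L_i]$ is also sparse and that $|L_{i-1}|\le|L_i|$.

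\textbf{Constants.} Once back edges are handled with a $4k$ threshold, you only get
\[
|L_{i+1}|>\bigl(\delta/(2k)-2\bigr)|L_i|-|L_{i-1}|.
\]
Layers of equal size satisfy all of these constraints whenever $\delta<8k$. Your reduction only guarantees $\delta>4kn^{1/(k+1)}$, so the argument proves nothing when $n^{1/(k+1)}<2$. Hence your sketch cannot deliver the constant $8k$ for $n<2^{k+1}$.

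That is exactly the range in which this paper uses the bound: $n\ge C_\eps k$ in Lemma~\ref{lem:mass-alpha}, and $n<D_\eps k^3$ in the derivation of \eqref{WR}. For the layer recursion to grow for every $n$, the path lemma's average-degree threshold would have to come down to about $2k$. Otherwise that range needs a separate argument, or you should simply cite the theorem as the paper does.
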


The next lemma gives a bound on the total mass of a Perron vector.

\begin{lemma}
\label{lem:mass-alpha}
For every $\eps>0$ and every $\sigma>0$, there are constants $C_{\eps,\sigma}$ and $k_{\eps,\sigma}$ such that the following holds. Let $0\le\alpha\le1-\eps$, let $k\ge k_{\eps,\sigma}$, and let $n\ge C_{\eps,\sigma}k$. Let $H$ be an $n$-vertex $C_{2k+2}$-free graph with
$
        \rho_\alpha(H)\ge \rho_\alpha(S_{n,k}).
$
If $v$ is a nonnegative Perron vector of $A_\alpha(H)$ normalized by $\max_u v_u=1$, then
$
        W(v):=\sum_{u\in V(H)}v_u\le \sigma n.
$
\end{lemma}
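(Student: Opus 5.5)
Let $\rho=\rho_\alpha(H)$, and write $x_u:=v_u$ with $\max_u x_u=x_{u^*}=1$. The plan is to turn the eigen-equation into an estimate of the total mass $W$ through the edge count, and then combine it with the lower bound on $\rho$. The eigen-equation at a vertex $u$ reads
\[
(\rho-\alpha d(u))x_u=\beta\sum_{w\sim u}x_w .
\]
Summing this over all $u$ gives
\[
\rho W=\sum_u\bigl(\alpha d(u)+\beta d(u)\bigr)x_u=\sum_u d(u)x_u .
\]
Let $T=\{u: d(u)\ge \delta n\}$ be the set of high-degree vertices, with a small $\delta=\delta(\eps,\sigma)$. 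By Lemma~\ref{lem:even-circuit}, $e(H)\le 8kn^{1+1/(k+1)}\le 9kn$ once $k\ge k_{\eps,\sigma}$ and $n\ge C_{\eps,\sigma}k$ (one checks $n^{1/(k+1)}$ is bounded for $n$ polynomial in $k$; for huge $n$ use the same bound with the exponent small). Hence $|T|\le 18k/\delta$. Then
\[
\sum_u d(u)x_u\le \sum_{u\in T}d(u)+\delta n\,W ,
\]
and the first term is at most $2e(H)\le 18kn$. Combining,
\[
(\rho-\delta n)\,W\le 18kn .
\]

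The key input is a lower bound on $\rho$. By Lemma~\ref{lem:spectral-upper-alpha},
\[
\rho\ge\rho_\alpha(S_{n,k})\ge \beta\sqrt{kn}\ge \eps\sqrt{kn}.
\]
Also, by Lemma~\ref{lem:comparison-equation}, $\rho\ge a_k\ge \alpha(n-1)$. Suppose first that $\alpha\ge \sigma'$ for some small $\sigma'$. Then $\rho\ge \sigma' n/2$, and choosing $\delta=\sigma'/4$ gives $W\le 72kn/(\sigma' n)=O(k)\le\sigma n$, since $n\ge C k$. If $\alpha<\sigma'$, this crude bound is too weak: $\rho$ is only of order $\sqrt{kn}$, and we need $W\le\sigma n$ rather than $W\lesssim kn/\sqrt{kn}=\sqrt{kn}$, which is fine only when $\sqrt{kn}\le\sigma n$, i.e.\ $n\ge k/\sigma^2$. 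So the estimate must be refined: we replace $\delta n$ by a threshold $\delta\sqrt{kn}$. With $T=\{u: d(u)\ge \delta\sqrt{kn}\}$ we get $|T|\le 18\sqrt{kn}/\delta$. For $u\notin T$,
\[
\rho x_u\le \alpha d(u)x_u+\beta\sum_{w\sim u}x_w ,
\]
so $x_u\le d(u)/(\rho-\alpha d(u))\le 2\delta/\eps$ roughly. Summing the eigen-equation only over $u\notin T$ gives
\[
(\rho-\alpha\delta\sqrt{kn})\sum_{u\notin T}x_u\le \beta\bigl(e(T,\overline T)+2e(\overline T)\cdot\max_{\overline T}x\bigr).
\]
Here $e(\overline T)\le 9kn$ and $\max_{\overline T}x\le 2\delta/\eps$. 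For the cross term, $e(T,\overline T)\le 9kn$ as well. This yields
\[
\sum_{u\notin T}x_u\lesssim \frac{kn}{\eps\sqrt{kn}}=\sqrt{kn}/\eps .
\]
Then $W\le |T|+\sqrt{kn}/\eps\le K\sqrt{kn}\le\sigma n$ once $n\ge (K/\sigma)^2k$, which is exactly the hypothesis $n\ge C_{\eps,\sigma}k$.

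The main obstacle is making the edge bound $e(H)=O(kn)$ valid uniformly, since Lemma~\ref{lem:even-circuit} gives $kn^{1+1/(k+1)}$, which is $O(kn)$ only when $n\le e^{O(k)}$. For larger $n$ I would instead use Nikiforov's bound through Lemma~\ref{lem:spectral-upper-alpha}, applied to the subgraph $H[\overline T]$: there $\lambda\le\sqrt{2kn}$. I would then bound
\[
\sum_{\overline T}x_u\le \sqrt{n}\,\|x_{\overline T}\|_2 ,
\]
and control $\|x\|_2^2$ via the Rayleigh quotient, $\|x\|_2^2\le\rho\|x\|_\infty$-type estimates. If this fails, a cleaner route is to bound $e(H)\le kn$ via Erd\H{o}s--Gallai-type reasoning on long paths, i.e.\ $C_{2k+2}$-free graphs with large average degree contain long paths but not necessarily the cycle. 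So I would first try the Nikiforov $\ell_2$ route, accepting a $\sqrt{\cdot}$ loss absorbed by $C_{\eps,\sigma}$.
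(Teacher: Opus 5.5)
\paragraph{Gap.} Your argument rests on $e(H)\le 9kn$ for every $n\ge C_{\eps,\sigma}k$, and this is a genuine gap. Lemma~\ref{lem:even-circuit} gives this only when $n^{1/(k+1)}\le 9/8$, i.e.\ $n\le(9/8)^{k+1}$. For larger $n$ the bound is false, not merely unproved: for each fixed $k$, $\ex(n,C_{2k+2})$ is superlinear (the deletion method already gives $\Omega(n^{1+1/(2k+1)})$ edges). So no Erd\H{o}s--Gallai-type reasoning can yield $e(H)\le kn$; indeed $C_{2k+2}$-free graphs may contain Hamilton paths.

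The $\ell_2$ fallback does not rescue it either. Since $0\le x_u\le 1$, you have $\|x\|_2^2\le W$, and the Rayleigh quotient only gives $\rho\|x\|_2^2\le\sum_u d(u)x_u=\rho W$, which is circular. If you instead combine $\rho W=\mathbf 1^TA(H)x\le\lambda(H)\sqrt n\,\|x\|_2\le\sqrt{2kn}\,\sqrt n\,\sqrt W$ with $\rho\ge\eps\sqrt{kn}$, you get only $W\le 2n/\eps^2$, which is not $\le\sigma n$.

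\paragraph{Fix.} The missing observation is that no linear edge bound is needed: the factor $\sqrt n$ gained from $\rho\ge\eps\sqrt{kn}$ beats the loss $n^{1/(k+1)}$. Using only $x_u\le 1$ in your identity $\rho W=\sum_u d(u)x_u$ gives $\rho W\le 2e(H)\le 16kn^{1+1/(k+1)}$, hence
\[
\frac{W}{n}\le 16\eps^{-1}\sqrt k\,n^{-1/2+1/(k+1)} .
\]
For $k\ge 2$ the exponent is negative, so the right-hand side is decreasing in $n$. Over $n\ge Ck$ it is therefore at most $16\eps^{-1}C^{-1/2}(Ck)^{1/(k+1)}\le 32\eps^{-1}C^{-1/2}$, once $k$ is large in terms of $C$. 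Choosing $C$ with $32\eps^{-1}C^{-1/2}\le\sigma$ finishes the proof. This is essentially the paper's proof, and it is uniform in $\alpha$: only $n$ near $Ck$ is critical, and there $n^{1/(k+1)}$ is bounded.

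In particular, the high-degree set $T$ and the case split on $\alpha$ are superfluous. Your claim that $W\lesssim\sqrt{kn}/\eps$ is too weak for small $\alpha$ is also mistaken: $\sqrt{kn}\le\sigma n$ just means $n\ge k/\sigma^2$, which is the same kind of condition your refined bound ends with.
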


\begin{proof}
By Lemma~\ref{lem:spectral-upper-alpha},
$
        \rho_\alpha(H)\ge \rho_\alpha(S_{n,k})
        \ge \beta\sqrt{kn}
        \ge \eps\sqrt{kn},
$
provided $n\ge4k$. Let $v$ be a nonnegative Perron vector of $A_\alpha(H)$ normalized by $\max_u v_u=1$, and write $W(v)=\sum_{u\in V(H)}v_u$. For every vertex $u$, the Perron equation gives
\begin{flalign*}
        \rho_\alpha(H)v_u
        =\alpha d(u)v_u+\beta\sum_{w\sim u}v_w 
        \le \alpha d(u)+\beta d(u) 
        =d(u). 
\end{flalign*}
Summing over all vertices and using Lemma~\ref{lem:even-circuit}, we obtain
\begin{flalign*}
        \eps\sqrt{kn}\,W(v)
    \le \rho_\alpha(H)W(v) 
    \le \sum_{u\in V(H)}d(u) 
    =2e(H) 
    \le 16k n^{1+1/(k+1)}. 
\end{flalign*}
Therefore
$
        \frac{W(v)}{n}
        \le 16\eps^{-1}\sqrt{k}\,n^{-1/2+1/(k+1)}.
$
Choose $C_{\eps,\sigma}$ large enough so that $C_{\eps,\sigma}\ge4$ and
$
        32\eps^{-1}C_{\eps,\sigma}^{-1/3}\le \sigma.
$
Then choose $k_{\eps,\sigma}$ large enough so that, for every $k\ge k_{\eps,\sigma}$ and every $n\ge C_{\eps,\sigma}k$,
$
        \sqrt{k}\,n^{-1/2+1/(k+1)}
        \le 2 C_{\eps,\sigma}^{-1/3}.
$
It follows that $W(v)\le \sigma n$.
\end{proof}

\subsection{Weighted rooted Erd\H{o}s--Gallai lemmas}

This subsection proves several rooted path estimates. They are weighted variants of the Erd\H{o}s--Gallai argument: forbidding a path of prescribed order with both end vertices in a fixed set $U$ gives a linear bound on the incidences from $U$. We first recall the standard Erd\H{o}s--Gallai bound in the form used below.

\begin{lemma}[Erd\H{o}s--Gallai~\cite{ErdosGallai1959}]
\label{lem:EG}
If a graph on $n$ vertices contains no path of order $\ell$, then it has at most $(\ell-2)n/2$ edges.
\end{lemma}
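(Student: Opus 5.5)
The plan is the classical argument: induction on the number of vertices $n$, combined with the longest-path and Pósa rotation analysis. Write $G$ for a graph on $n$ vertices with no path of order $\ell$, that is, no path with $\ell$ vertices. We may assume $\ell\ge2$; for $\ell=1$ the graph is empty and there is nothing to prove. If $G$ is disconnected, apply the induction hypothesis to each component and add the bounds. The right-hand side $(\ell-2)n/2$ is additive in $n$, so the claim follows. Hence we may assume $G$ is connected. If $n\le \ell-1$, the trivial bound $e(G)\le n(n-1)/2\le (\ell-2)n/2$ already suffices. So assume $G$ is connected with $n\ge\ell$.

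\textbf{Vertex of small degree.} Suppose first that $G$ has a vertex $u$ with $d(u)\le (\ell-2)/2$. Delete $u$. The graph $G-u$ still has no path of order $\ell$, so by induction $e(G-u)\le(\ell-2)(n-1)/2$. Adding back the at most $(\ell-2)/2$ edges at $u$ gives the bound for $G$.

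\textbf{All degrees large.} Now suppose every vertex has degree at least $(\ell-1)/2$. Take a longest path $P=x_1x_2\cdots x_m$ in $G$, so $m\le \ell-1$. By maximality, every neighbour of $x_1$ and every neighbour of $x_m$ lies on $P$. We then use the standard crossing-pair argument. Suppose there is no index $i$ with $x_1x_{i+1}\in E$ and $x_ix_m\in E$. Then the sets $\{i: x_1x_{i+1}\in E\}$ and $\{i:x_ix_m\in E\}$ are disjoint subsets of $\{1,\dots,m-1\}$. This forces $d(x_1)+d(x_m)\le m-1\le \ell-2$, which contradicts $d(x_1)+d(x_m)\ge \ell-1$. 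So such an $i$ exists, and
\[
x_{i+1}x_{i+2}\cdots x_m x_i x_{i-1}\cdots x_1 x_{i+1}
\]
is a cycle $C$ on the vertex set $V(P)$. Since $G$ is connected and $n\ge \ell>m$, some vertex $w$ outside $C$ is adjacent to a vertex of $C$. Opening $C$ at that vertex and appending $w$ gives a path with $m+1$ vertices. This contradicts the maximality of $P$, so this case cannot occur.

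\textbf{Main obstacle.} The delicate point is the degree threshold. The degree-deletion step needs $d(u)\le(\ell-2)/2$, while the cycle step needs $d(x_1)+d(x_m)\ge \ell-1$. When $\ell$ is even these two conditions do not complement each other over the integers, and I would check this boundary case carefully. Since degrees are integers, the negation of $d(u)\le(\ell-2)/2$ is $d(u)\ge \lceil(\ell-1)/2\rceil$. This gives $d(x_1)+d(x_m)\ge 2\lceil(\ell-1)/2\rceil\ge \ell-1$, so the two cases do in fact complement each other and the argument closes. The remaining steps are routine.
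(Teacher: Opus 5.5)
Your proof is correct, including the integrality check in your last paragraph: failing $d(u)\le(\ell-2)/2$ forces $d(u)\ge\lceil(\ell-1)/2\rceil$, so $d(x_1)+d(x_m)\ge\ell-1\ge m$ and a crossing pair exists.

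The paper does not prove this lemma. It quotes it from Erd\H{o}s--Gallai as a black box and invokes it in the $(K_1\vee P_\ell)$-free application to get $e(G[Y])\le\frac{\ell-2}{2}|Y|$ for $Y\subseteq N_G(z)$. Your argument is the standard self-contained one: strong induction on $n$, splitting into components, and deleting a vertex of degree at most $(\ell-2)/2$. Otherwise you close a longest path into a cycle via a crossing pair and extend it using connectivity.

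There is one cosmetic slip, at $m=2$. There your ``cycle'' $x_2x_1x_2$ is degenerate. But all you actually use is that every vertex of $V(P)$ is an end of a path spanning $V(P)$, which is trivially true in that case.

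For comparison, the original Erd\H{o}s--Gallai route is different. They prove the circumference theorem: a graph with no cycle longer than $k$ has $e\le k(n-1)/2$. They then apply it to the cone $G\vee K_1$, which has no cycle longer than $\ell$ when $G$ has no path of order $\ell$. This gives $e(G)+n\le \ell n/2$. That route yields the stronger cycle statement as a by-product. Yours is more elementary, and it also yields a structural fact: every connected $P_\ell$-free graph on at least $\ell$ vertices has a vertex of degree at most $(\ell-2)/2$.
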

The following rooted form, due to Nikiforov~\cite{Nikiforov2010}, shows that if the incidence from a prescribed vertex set is sufficiently large, then there is a long path with both end vertices in that set.
\begin{lemma}[Nikiforov~\cite{Nikiforov2010}]
\label{lem:nik-path}
Let the vertex set of a graph $G$ be partitioned into two sets $U$ and $W$. If
$$
2e(U)+e(U,W)>(2s-1)|U|+s|W|,
$$
then $G$ contains a path of order $2s+1$ with both end vertices in $U$.
\end{lemma}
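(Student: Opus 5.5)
We argue by induction on $|U|+|W|$, following the pattern of the Erd\H{o}s--Gallai argument. We may assume $G$ is connected. Otherwise the inequality $2e(U)+e(U,W)>(2s-1)|U|+s|W|$ holds in at least one component, because both sides are additive over components. So it suffices to find the path inside that component.

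The first step is to delete low-contribution vertices. If some $u\in U$ has $d(u)\le s-1$, or more generally has $2d_U(u)+d_W(u)\le 2s-1$, then removing $u$ lowers the left side by at most $2d_U(u)+d_W(u)\le 2s-1$ and the right side by exactly $2s-1$. The strict inequality is therefore preserved, and we conclude by induction. Similarly, if some $w\in W$ has $d_U(w)\le s$, removing $w$ lowers the left side by $d_U(w)\le s$ and the right side by $s$, and again induction applies. Edges inside $W$ do not appear on the left side, so we may delete them all. After these reductions, $G[W]$ is independent, every $w\in W$ has at least $s+1$ neighbours in $U$, and every $u\in U$ satisfies $2d_U(u)+d_W(u)\ge 2s$. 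The base cases are trivial, since the inequality fails when $U=\emptyset$.

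The main step is to take a longest path $P$ whose two end vertices both lie in $U$. Suppose its order is at most $2s$, and write $P=x_1\dots x_p$. Every vertex of $W$ has neighbours only in $U$. So we may assume $P$ alternates in the weighted sense in which a $U$--$W$--$U$ segment replaces a $U$--$U$ edge. We then carry out the P\'osa rotation argument at the end $x_1$. By maximality, every $U$-neighbour of $x_1$ lies on $P$. Moreover, any $W$-neighbour $w$ of $x_1$ not on $P$ would have another $U$-neighbour $u\ne x_1$. If $u$ is off $P$, then $u w x_1\dots x_p$ is longer, contradicting maximality; if $u$ is on $P$, we use a rotation. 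In either case we get a contradiction unless all such neighbours lie on $P$. The standard counting then applies: if $x_1\sim x_{i+1}$ and $x_p\sim x_i$, we obtain a cycle through all of $V(P)$. The degree condition $2d_U+d_W\ge 2s$ at both ends, with $W$-neighbours counted as half, forces such a crossing pair whenever $p\le 2s$. Since $G$ is connected and $P$ does not cover everything, a vertex outside the cycle attaches to it. This lets us open the cycle into a longer path with both ends in $U$; if the attaching vertex lies in $W$, we use its second $U$-neighbour to make the new end lie in $U$. This contradicts maximality, so $P$ has order at least $2s+1$. Finally, we truncate $P$ to order exactly $2s+1$ with both ends in $U$. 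This is possible because consecutive $W$ vertices never occur on $P$, so among $x_{2s}$ and $x_{2s+1}$ one endpoint choice works, adjusting from the other end if necessary.

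The main obstacle is the rotation/counting step. We must handle the asymmetric weights, where $U$-neighbours count double, and ensure that all endpoints produced stay in $U$. The first thing to try is the pigeonhole count on the index sets $\{i: x_1\sim x_{i+1}\}$ and $\{i:x_p\sim x_i\}$ within $\{1,\dots,p-1\}$, weighting each index by $2$ or $1$ according to whether $x_i\in U$ or $x_i\in W$. We would then check that total weight above $2(p-1)$ forces an overlap.
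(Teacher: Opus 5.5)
There is a genuine gap. Your reduction step (deleting roots of small weighted degree, deleting $W$-vertices with $d_U\le s$, and deleting edges inside $W$) is fine. After that, however, the core extension argument is not actually carried out, and two of its claims fail as stated.

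The crossing itself is easy once every neighbour of $x_1$ and $x_p$ lies on $P$: then $d(x_1),d(x_p)\ge s$ and $|A|+|B|\ge 2s>p-1$. But excluding an off-path $W$-neighbour $w$ of $x_1$ needs a count you do not give. Rotation only rules out $U$-neighbours $x_j$ of $w$ with $x_{j-1}\in U$, so you must still argue that $d_U(w)\le 1+|W\cap V(P)|\le s$, contradicting $d_U(w)\ge s+1$.

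More seriously, opening the spanning cycle $C$ does not automatically give a $U$-ended path. Suppose $y\in U$ attaches at $c\in U$ and both $C$-neighbours of $c$ lie in $W$. Then both extensions of order $p+1$ end in $W$. Your fix for $y\in W$ (``use its second $U$-neighbour'') also fails when all $U$-neighbours of $y$ lie on $C$. These cases need further rotation and counting arguments, using $d_U\ge s+1$ on $W$ and $|W\cap V(C)|\le\lfloor p/2\rfloor\le s$. That is the real content of the proof, and it is missing.

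Second, the final truncation is false as justified. A path of order at least $2s+1$ with both ends in $U$ and no two consecutive $W$-vertices need not contain a subpath of order exactly $2s+1$ with both ends in $U$. For $s=2$, the path $u_1w_1u_2u_3w_2u_4$ has order $6$, yet both of its subpaths of order $5$ end in $W$. Your ``$x_{2s}$ or $x_{2s+1}$'' choice only yields a $U$-ended path of order $2s$ or $2s+1$. The shift-by-$2s$ argument is valid for paths of odd order, and a longest-path argument gives you no control over parity.

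The paper does not prove this lemma directly; it cites Nikiforov. However, the lemma is the case $T=V(G)$ of Lemma~\ref{lem:rooted-set}, and that proof is built precisely to control parity. It first runs an induction that deletes roots with fewer than $s$ neighbours in $X=\{v:d_U(v)>s\}$. It then adds an apex $z$ joined to all roots and applies Voss--Zuluaga to obtain an \emph{even} cycle of length at least $2s+2$. Either that cycle avoids $z$, or deleting $z$ leaves a $U$-ended path of odd order, and only then does the shift argument run. To rescue your route, you need either a comparable parity mechanism or a separate argument, using the degree conditions, that converts a long $U$-ended path into one of order exactly $2s+1$.
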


We shall also use the following long even cycle theorem of Voss and Zuluaga~\cite{VossZuluaga1977}.

\begin{lemma}[Voss-Zuluaga~\cite{VossZuluaga1977}]\label{VossZuluaga} If $F$ is a $2$-connected graph with $\delta(F)\ge d\ge3$ and $|V(F)|\ge2d$, then $F$ contains an even cycle of length at least $2d$.
\end{lemma}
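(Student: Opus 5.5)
Since this is a classical result quoted from~\cite{VossZuluaga1977}, I would reprove it by a longest-path argument in the style of Dirac's theorem on long cycles, adding parity bookkeeping. Let $P=v_0v_1\cdots v_m$ be a longest path in $F$. Among all longest paths, I would choose one that maximises the span of the neighbourhoods of its endpoints on $P$; this can be done by Pósa rotations. By maximality, $N(v_0)\cup N(v_m)\subseteq V(P)$.

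Write $N(v_0)=\{v_{i_1},\dots,v_{i_p}\}$ with $1=i_1<\dots<i_p$, where $p\ge d$, so $i_p\ge d$. Each neighbour gives a cycle $C_j=v_0v_1\cdots v_{i_j}v_0$ of length $i_j+1$. Each pair of neighbours gives a cycle $v_0v_{i_a}v_{i_a+1}\cdots v_{i_b}v_0$ of length $i_b-i_a+2$.

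\emph{Parity step.} Suppose $i_p$ is odd and $i_p\ge 2d-1$. Then $C_p$ is an even cycle of length at least $2d$, and we are done. Otherwise I would look for two neighbours $v_{i_a},v_{i_b}$ of $v_0$ with $i_b-i_a$ even and large. Among $p\ge d$ indices in $\{1,\dots,i_p\}$, pigeonhole on residues mod $2$ gives two indices of the same parity, but not necessarily at distance $\ge 2d-2$. So the one-endpoint argument alone gives only an even cycle of length about $d$, and the second endpoint must be used. The analogous cycles from $v_m$ have lengths $m-i+1$ and $i_b-i_a+2$ for neighbours $v_i$ of $v_m$.

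\emph{Using both endpoints.} I would follow Dirac's argument.

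In the crossing case there are $i<j$ with $v_0v_j$ and $v_mv_i$ edges and $j\le i+1$ (the usual crossing pattern). Then we get a cycle through essentially all of $P$: $v_0\cdots v_i v_m v_{m-1}\cdots v_j v_0$. In addition there is a family of cycles obtained by choosing different crossing pairs $(i,j)$. Their lengths differ by the gaps between consecutive neighbour indices. Since $v_0$ and $v_m$ together have at least $2d$ neighbours on $P$, I would show that some two of these long cycles (length $\ge 2d$) have different parity. The alternative is a rigid structure in which every such gap is even. In that structure the positions of $N(v_0)$ and $N(v_m)$ all lie in a single parity class along $P$, and I would extract the even cycle directly from the resulting bipartite-like pattern, using $|V(F)|\ge 2d$ and $d\ge 3$.

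In the non-crossing case, the neighbours of $v_0$ all come before the neighbours of $v_m$ along $P$. Here I would use $2$-connectivity, since $F-v$ is connected for every $v$. It yields a path $Q$ internally disjoint from $P$, or a chord, that links the segment $v_0\cdots v_{i_p}$ to the segment beyond it while bypassing the separating vertex. Combining $Q$ with the two fans of chords at $v_0$ and $v_m$ gives a long cycle. As before, the freedom of choosing which chord of $v_0$ and which chord of $v_m$ to use lets one adjust the parity of its length by an odd amount, unless all relevant gaps are even, which again forces a structure containing the desired cycle.

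I expect the parity step to be the main obstacle. Dirac's argument gives length $\ge 2d$ easily, but guaranteeing an \emph{even} length $\ge 2d$ requires showing that the set of achievable long-cycle lengths is not confined to odd values. The extreme configurations are $K_{2d-1}$-like blocks, which $|V(F)|\ge 2d$ excludes, and near-complete-bipartite pieces, which already contain even cycles. These must be handled by a careful case analysis of the neighbour positions, and this is where the hypotheses $d\ge 3$ and $|V(F)|\ge 2d$ are used.
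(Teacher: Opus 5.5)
\textbf{There is a genuine gap in your reproof.} The paper does not prove this lemma; it quotes it from Voss and Zuluaga and uses it as a black box, so a citation would have sufficed. Dirac's argument already gives a cycle of length at least $\min\{|V(F)|,2d\}$, so everything this theorem adds is parity control, and at each point where parity must be controlled your sketch only says ``I would show''.

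\textbf{Crossing case.} You restrict to pairs with $i<j\le i+1$, i.e.\ $j=i+1$. Every such pair yields the cycle $v_0\cdots v_iv_mv_{m-1}\cdots v_{i+1}v_0$ on all of $V(P)$, of length exactly $m+1$. In this case $V(P)=V(F)$ by connectivity and maximality of $P$. So different crossing pairs cannot give different parities: when $|V(F)|$ is odd, all these cycles are odd. Lengths vary only for pairs $v_0v_j$, $v_mv_i$ with $j-i\ge 2$, which give length $m+2-(j-i)$. You would then have to prove that some pair has $j-i$ of the right parity with $j-i\le m+2-2d$. Already for $|V(F)|=2d+1$ this means finding $i$ with $v_mv_i,\ v_0v_{i+2}\in E(F)$ (or exhibiting some other even cycle of length $2d$). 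Nothing you set up forces this.

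\textbf{Your dichotomy is inverted.} The ``rigid'' case, where all gaps between consecutive neighbour indices of $v_0$ are even, is the trivial one. Since $i_1=1$, every $i_j$ is then odd, so $i_p\ge 2p-1\ge 2d-1$ and $C_p$ is already an even cycle of length at least $2d$. The whole difficulty is the case where $N(v_0)$ and $N(v_m)$ contain indices of both parities. There, the claim that two cycles of length at least $2d$ have different parities is essentially the Voss--Zuluaga theorem itself, not a consequence of pigeonhole.

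\textbf{Non-crossing case.} The same problem recurs. Swapping the chord used at $v_0$ or at $v_m$ changes which vertices of $P$ the cycle traverses. You have not shown that some swap changes the length by an odd amount while keeping the cycle through the connecting path $Q$ intact and of length at least $2d$.

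As written, the proposal identifies the obstacle but does not overcome it. Either cite the theorem, as the paper does, or supply a complete case analysis of the mixed-parity configurations.
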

We first prove the incidence estimate that will be used in the weighted
arguments.
\begin{lemma}
\label{lem:rooted-set}
Let $s\ge1$ be an integer. Let $G$ be a graph, and let $U\subseteq V(G)$. Suppose that $G$ contains no path of order $2s+1$ with both end vertices in $U$. Then, for every $T\subseteq V(G)$,
$$
\sum_{t\in T}d_U(t)\le (s-1)|U|+s|T|.
$$
\end{lemma}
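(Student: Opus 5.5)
The plan is to reduce to Nikiforov's rooted path lemma (Lemma~\ref{lem:nik-path}), first in the case $U\subseteq T$, and then to handle a general $T$ by a minimal-counterexample argument that deletes low-degree vertices.

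\emph{Case $U\subseteq T$.} Write $W=T\setminus U$ and let $G'$ be the subgraph of $G[T]$ obtained by keeping all edges inside $U$ and all edges between $U$ and $W$, and discarding the edges inside $W$. Every path of $G'$ is a path of $G$, so $G'$ has no path of order $2s+1$ with both ends in $U$. Since $U\cup W$ partitions $V(G')$, the contrapositive of Lemma~\ref{lem:nik-path} gives
\[
2e(U)+e(U,W)\le (2s-1)|U|+s|W| .
\]
The left-hand side equals $\sum_{t\in T}d_U(t)$, because $I(U,U)=2e(U)$ and $I(W,U)=e(U,W)$. Substituting $|W|=|T|-|U|$ turns the right-hand side into $(s-1)|U|+s|T|$, which is the claimed bound.

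\emph{General $T$.} Here $U\not\subseteq T$, and we cannot simply enlarge $T$ to $T\cup U$: that costs an extra $s|U\setminus T|$ on the right-hand side. Instead I would argue by induction on $|U|+|T|$, taking a minimal counterexample $(U,T)$. Two reductions are available.
\begin{itemize}
\item If some $t\in T\setminus U$ has $d_U(t)\le s$, deleting $t$ from $T$ lowers the left side by at most $s$ and the right side by exactly $s$. If $t\in T\cap U$ instead, shrinking $T$ alone again costs at most $d_U(t)\le s$ on the left against $s$ on the right. So in a minimal counterexample every $t\in T$ has $d_U(t)\ge s+1$.
\item If some $u\in U\setminus T$ has $d_T(u)\le s-1$, replace $U$ by $U\setminus\{u\}$. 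The forbidden-path hypothesis persists, since ends in $U\setminus\{u\}$ are ends in $U$. The left side drops by exactly $d_T(u)\le s-1$, and the right side drops by $s-1$. So in a minimal counterexample every $u\in U\setminus T$ has $d_T(u)\ge s$.
\end{itemize}

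In this reduced configuration I would rerun Nikiforov's longest-path argument directly on the graph spanned by the $U$--$T$ incidences. Take a longest path $P$ with both ends in $U$ that alternates suitably through $T$. The degree conditions $d_U(t)\ge s+1$ on $T$ and $d_T(u)\ge s$ on $U\setminus T$ should let me extend or rotate $P$, Pósa-style, until it reaches order $2s+1$, which contradicts the hypothesis.

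The step I expect to be the main obstacle is exactly this final extension for general $T$. The difficulty is that edges between $T\setminus U$ and $U\setminus T$ are counted on the left, yet vertices of $U\setminus T$ carry no weight on the right. If the direct extension argument becomes messy, my fallback is to strengthen the induction hypothesis to a two-parameter statement, bounding $\sum_{t\in T}d_U(t)$ by $(s-1)|U'|+s|T|$ for a suitable subset $U'\subseteq U$. The aim would be to let the rotation argument absorb the $U\setminus T$ vertices cleanly.
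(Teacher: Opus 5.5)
Your case $U\subseteq T$ is correct. For $T=V(G)$ the statement is exactly the contrapositive of Lemma~\ref{lem:nik-path}, so all the new content lies in the case $U\not\subseteq T$. Your two reductions are also valid; they are essentially the paper's. The paper passes to $T=\{v:d_U(v)>s\}$ through the form $\sum_v(d_U(v)-s)^+\le(s-1)|U|$, and then discards roots with fewer than $s$ neighbors in that set.

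The gap is the last step. You want the conditions $d_U(t)\ge s+1$ for $t\in T$ and $d_T(u)\ge s$ for $u\in U\setminus T$ to force, by rotation or extension, a path of order $2s+1$ with both ends in $U$. They do not. For $s\ge2$, let $G=K_{2s}$ (or a disjoint union of copies), $U=V(G)$ and $T=V(G)\setminus\{w\}$. Then $d_U(t)=2s-1\ge s+1$ for every $t\in T$ and $d_T(w)=2s-1\ge s$, yet $G$ has no path of order $2s+1$ at all. So reduced configurations satisfying the hypothesis really exist. A minimal counterexample therefore cannot be ruled out by exhibiting the forbidden path; the inequality must be verified for such configurations by counting. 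Your fallback with an unspecified $U'\subseteq U$ does not say how to do this.

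The missing idea is to localise to components. Let $H$ be the graph on $U\cup T$ whose edges are the edges of $G$ joining a vertex of $U$ to a vertex of $T$. After your reductions:
\begin{itemize}
\item $T\setminus U$ is independent in $H$;
\item each vertex of $T\setminus U$ has at least $s+1$ neighbors in $U$;
\item every vertex of $U$ has $H$-degree at least $s$ (for $u\in U\cap T$ this follows from $d_U(u)\ge s+1$).
\end{itemize}

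The paper proves that a connected graph with these properties and at least $2s+1$ vertices contains a path of order $2s+1$ with both ends in $U$:
\begin{itemize}
\item add an apex vertex joined to all of $U$;
\item check that the result is $2$-connected with minimum degree at least $s+1$;
\item apply Lemma~\ref{VossZuluaga} to get an even cycle of length at least $2s+2$;
\item extract the path by a shift-by-$2s$ parity argument.
\end{itemize}
So the degree conditions force the path only in components of order at least $2s+1$, which is exactly what the $K_{2s}$ example avoids. Hence every component $C$ of $H$ has at most $2s$ vertices. This is the substantive step, and your sketch gives no argument for it.

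Then count inside each component. Let $a=|U\cap V(C)|$, $b=|V(C)\setminus U|$ and $c=|U\cap T\cap V(C)|$, so that $a+b\le 2s$. Then $\sum_{t\in T\cap V(C)}d_U(t)\le ab+(a-1)c$. The inequality $ab+(a-1)c\le (s-1)a+s(b+c)$ reduces to $a(2s-a-b)+sb\ge0$, after replacing $c$ by $a$ when $a\ge s+1$. Summing over components gives the lemma.
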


\begin{proof}
We first prove an auxiliary assertion. Let $H$ be a connected graph, let $A\subseteq V(H)$ and $B=V(H)\setminus A$. Suppose that $B$ is independent, every vertex of $A$ has degree at least $s$, and every vertex of $B$ has at least $s+1$ neighbors in $A$. If $|V(H)|\ge2s+1$, then $H$ contains a path of order $2s+1$ with both end vertices in $A$.

For $s=1$ the assertion is immediate. Thus we may assume that $s\ge2$. Add a new vertex $z$ adjacent to every vertex of $A$, and let $J$ be the resulting graph. We claim that $J$ is $2$-connected. Indeed, deleting $z$ leaves the connected graph $H$. If a vertex $a\in A$ is deleted, then $z$ is adjacent to every vertex of $A\setminus\{a\}$, and each vertex of $B$ still has a neighbor in $A\setminus\{a\}$. If a vertex of $B$ is deleted, then the remaining graph is connected because $z$ is adjacent to all vertices of $A$ and every remaining vertex of $B$ has a neighbor in $A$. Thus $J$ is $2$-connected. Moreover $\delta(J)\ge s+1$. By Lemma~\ref{VossZuluaga}, $J$ contains an even cycle $C$ of length at least $2s+2$. Suppose first that $z\notin V(C)$, then $C\subseteq H$. 
We denote $C=q_0q_1\cdots q_{\ell-1}q_0$, where $\ell\ge2s+2$ is even, and take indices modulo $\ell$. 
Let $S=\{i:q_i\in A\}$, then $|S|\ge \ell/2$, since $B$ is independent, no two consecutive vertices of $C$ lie in $B$.
If no two vertices of $A$ are joined by an arc of length $2s$ on $C$, then the shift $i\mapsto i+2s$ sends $S$ into its complement. Hence $|S|\le\ell/2$, so $|S|=\ell/2$. It follows that the vertices of $C$ alternate between $A$ and $B$. Since $2s$ is even, the shift $i\mapsto i+2s$ preserves the two parity classes of the cycle, and therefore cannot send all vertices of $S$ outside $S$. 
This contradiction shows that $C$ contains an arc of length $2s$ whose end vertices lie in $A$. This arc is the desired path. Now suppose that $z\in V(C)$. Deleting $z$ from $C$ gives a path $ P=p_0p_1\cdots p_M $ in $H$, where $p_0,p_M\in A$, $M$ is even, and $M\ge2s$. Let $S=\{i:p_i\in A\}$. Since $B$ is independent, no two consecutive indices are outside $S$. We claim that there exist $i$ and $i+2s$ both in $S$. Otherwise the shift $i\mapsto i+2s$ maps $S\cap\{0,\ldots,M-2s\}$ into $\{2s,\ldots,M\}\setminus S$.
However, since no two consecutive indices are outside $S$ and $0\in S$, we have $ |S\cap\{0,\ldots,M-2s\}|\ge \frac{M-2s}{2}+1.$
But $M\in S$, $ |\{2s,\ldots,M\}\setminus S|\le \frac{M-2s}{2}.$ 
This is impossible. Hence some $i,i+2s\in S$, and the subpath $p_ip_{i+1}\cdots p_{i+2s}$ has order $2s+1$ and both end vertices in $A$. We now prove the lemma. For a real number $x$, write $x^+=\max\{x,0\}$. It suffices to prove \[ \sum_{v\in V(G)}(d_U(v)-s)^+\le (s-1)|U|. \] Indeed, this estimate gives \[ \sum_{t\in T}d_U(t) \le s|T|+\sum_{v\in V(G)}(d_U(v)-s)^+ \le (s-1)|U|+s|T|. \] 
We prove the positive-part estimate by induction on $|U|$. Let $ X=\{v\in V(G):d_U(v)>s\}. $ If some $u\in U$ satisfies $d_X(u)\le s-1$, then we apply the induction hypothesis to $U\setminus\{u\}$. Removing $u$ from the root set decreases $(d_U(v)-s)^+$ by one precisely for vertices $v\in N(u)\cap X$, and does not increase any other term. Hence \[ \sum_{v\in V(G)}(d_U(v)-s)^+ \le (s-1)(|U|-1)+d_X(u) \le (s-1)|U|. \] We may therefore assume that every $u\in U$ has at least $s$ neighbors in $X$. Let $H$ be the graph with vertex set $U\cup X$ and edge set $ E(H)=\{xy\in E(G):x\in U,\ y\in X\}. $ Thus every vertex of $U$ has degree at least $s$ in $H$, and every vertex of $X$ has at least $s+1$ neighbors in $U$. 
By the auxiliary assertion, every component of $H$ has at most $2s$ vertices. Let $C$ be a component of $H$. Define $U_C=U\cap V(C)$, $X_C=X\cap V(C)$ and write $a=|U_C|$, $b=|X_C\setminus U_C|$, $c=|X_C\cap U_C|$. Since $|V(C)|\le2s$, we have $a+b\le2s$. For every $x\in X_C$, all neighbors of $x$ in $U$ lie in $U_C$.
Hence \[ \sum_{x\in X_C}d_U(x)\le ab+(a-1)c,\] and therefore \[ \sum_{x\in X_C}(d_U(x)-s) \le ab+(a-1)c-s(b+c). \] We claim that the right-hand side is at most $(s-1)a$. Equivalently, $ a(s-1)+b(s-a)+c(s-a+1)\ge0. $ If $a\le s$, this is immediate. If $a\ge s+1$, then the coefficient of $c$ is nonpositive, and replacing $c$ by the larger value $a$ can only decrease the left-hand side. It is therefore enough to prove $a(s-1)+b(s-a)+a(s-a+1)\ge0. $ The left-hand side equals $ a(2s-a-b)+sb, $ which is nonnegative because $a+b\le2s$. Summing over all components gives \[ \sum_{v\in V(G)}(d_U(v)-s)^+ =\sum_{x\in X}(d_U(x)-s) \le (s-1)|U|. \]
This completes the proof of Lemma~\ref{lem:rooted-set}.
\end{proof}

The next lemma is the weighted form of Lemma~\ref{lem:rooted-set}.
\begin{lemma}
\label{lem:rooted-weighted-general}
Let $G$ be a graph, let $U\subseteq V(G)$, and suppose that $G$ contains no path of order $2s+1$ with both end vertices in $U$. If $f:U\to[0,\infty)$ and $g:V(G)\to[0,\infty)$, then
$$
\sum_{u\in U}f(u)\sum_{y\sim u}g(y) \le (s-1)\|g\|_\infty\sum_{u\in U}f(u) +s\|f\|_\infty\sum_{y\in V(G)}g(y).
$$
\end{lemma}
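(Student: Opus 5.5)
Since both sides of the inequality are linear in $f$ and in $g$ separately, and both are nonnegative, I plan to derive it from Lemma~\ref{lem:rooted-set} by a layer-cake (level-set) decomposition. By homogeneity I first normalize $\|f\|_\infty=\|g\|_\infty=1$; the case where either function vanishes identically is trivial. Then I write
\[
f(u)=\int_0^1 \mathbf 1[f(u)> a]\,da,\qquad g(y)=\int_0^1 \mathbf 1[g(y)> b]\,db .
\]
For $a,b\in[0,1)$, set $U_a=\{u\in U: f(u)>a\}$ and $T_b=\{y\in V(G): g(y)>b\}$. Substituting both representations gives
\[
\sum_{u\in U}f(u)\sum_{y\sim u}g(y)=\int_0^1\!\!\int_0^1 I(U_a,T_b)\,da\,db
=\int_0^1\!\!\int_0^1 \sum_{t\in T_b}d_{U_a}(t)\,da\,db .
\]
This uses Fubini over the finite set of pairs $(u,y)$ with $u\in U$ and $y\sim u$, together with $I(U_a,T_b)=\sum_{u\in U_a}d_{T_b}(u)=\sum_{t\in T_b}d_{U_a}(t)$.

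Next, for each fixed $a$, the subset $U_a\subseteq U$ inherits the hypothesis: any path of order $2s+1$ with both end vertices in $U_a$ would have both end vertices in $U$. So Lemma~\ref{lem:rooted-set}, applied with root set $U_a$ and $T=T_b$, gives
\[
\sum_{t\in T_b}d_{U_a}(t)\le (s-1)|U_a|+s|T_b|.
\]
Integrating over $(a,b)\in[0,1)^2$ and using $\int_0^1|U_a|\,da=\sum_{u\in U}f(u)$ and $\int_0^1|T_b|\,db=\sum_y g(y)$ yields
\[
\sum_{u\in U}f(u)\sum_{y\sim u}g(y)\le (s-1)\sum_{u\in U} f(u)+s\sum_y g(y).
\]
Undoing the normalization, by multiplying through by $\|f\|_\infty\|g\|_\infty$, gives exactly the stated bound.

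I do not expect a real obstacle. The two points that need care are the following.
\begin{itemize}
\item The heredity of the forbidden-path hypothesis to subsets $U_a$, which is immediate.
\item The bookkeeping showing that $I(U_a,T_b)$ correctly counts the incidences $u\sim y$ with $u\in U_a$ and $y\in T_b$. Vertices $y\in U$ are allowed, since $d_{U_a}(t)$ counts the neighbors of $t$ in $U_a$ regardless of whether $t\in U$.
\end{itemize}

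If one prefers to avoid integrals, the same argument can be written as a finite sum. Sort the distinct values of $f$ and of $g$, and express $f$ and $g$ as nonnegative combinations of indicator functions of nested level sets. The coefficients then sum to $\|f\|_\infty$ and $\|g\|_\infty$ respectively, and applying Lemma~\ref{lem:rooted-set} term by term gives the same inequality.
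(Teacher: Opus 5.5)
Your proposal is correct and follows the paper's proof essentially step for step. Like the paper, you normalize, apply Lemma~\ref{lem:rooted-set} to the level sets $U_a$ and $T_b$, and integrate over $[0,1]^2$; the paper uses the level sets $\{f\ge p\}$ and $\{g\ge q\}$ instead of strict ones, which makes no difference, and your explicit remark that $U_a\subseteq U$ inherits the forbidden-path hypothesis fills in a step the paper leaves implicit.
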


\begin{proof}
If $\|f\|_\infty=0$ or $\|g\|_\infty=0$, then we are done. By homogeneity assume $0\le f,g\le1$. For $0\le p,q\le1$, let
$
U_p=\{u\in U:f(u)\ge p\}$ and $T_q=\{y\in V(G):g(y)\ge q\}$.

By Lemma~\ref{lem:rooted-set}, applied with $T=T_q$ and $U=U_p$,
$$
\sum_{y\in T_q}d_{U_p}(y)\le (s-1)|U_p|+s|T_q|.
$$
Integrating the preceding inequality over $(p,q)\in[0,1]^2$, and using
the identities
$
        \int_0^1 |U_p|\,dp=\sum_{u\in U}f(u)$,
        $\int_0^1 |T_q|\,dq=\sum_{y\in V(G)}g(y),
$
and
$
        \int_0^1\int_0^1\sum_{y\in T_q}d_{U_p}(y)\,dp\,dq
        =\sum_{u\in U}f(u)\sum_{y\sim u}g(y),
$
gives the desired inequality in the normalized case $0\le f,g\le1$.
The general case follows by rescaling.
\end{proof}

We shall use the following consequence of Lemma~\ref{lem:rooted-weighted-general}.

\begin{lemma}
\label{lem:two-level}
Let $G$ be a graph, let $U\subseteq V(G)$, and suppose that $G$ contains no
path of order $2s+1$ with both end vertices in $U$. Let $A,M\subseteq V(G)$
be disjoint, let $a=|A|$, and let $\omega:M\to[0,\eta]$, where
$0<\eta<1$. Then
\[
        \sum_{u\in U}d_A(u)
        +\sum_{u\in U}\sum_{m\in N(u)\cap M}\omega(m)
        \le
        \bigl(\eta(s-1)+(1-\eta)a\bigr)|U|
        +\eta sa+s\sum_{m\in M}\omega(m).
\]
\end{lemma}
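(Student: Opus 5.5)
The idea is to split the weight function on $A\cup M$ into two parts. One part is handled by the trivial bound $d_A(u)\le a$. The other is a function bounded by $1$, to which Lemma~\ref{lem:rooted-weighted-general} applies. The factor $\eta$ on $M$ is what makes this split natural.

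Define $g:V(G)\to[0,\infty)$ by $g=\mathbf 1_A+\omega/\eta$ on $A\cup M$ and $g=0$ elsewhere. Since $A$ and $M$ are disjoint and $\omega\le\eta$, we have $0\le g\le 1$, so $\|g\|_\infty\le1$. With $f\equiv1$ on $U$ (so $\|f\|_\infty=1$), Lemma~\ref{lem:rooted-weighted-general} gives
\[
\sum_{u\in U}\Bigl(d_A(u)+\frac1\eta\sum_{m\in N(u)\cap M}\omega(m)\Bigr)\le (s-1)|U|+s\Bigl(a+\frac1\eta\sum_{m\in M}\omega(m)\Bigr).
\]
The right-hand side of Lemma~\ref{lem:rooted-weighted-general} is monotone in $\|g\|_\infty$, so $\|g\|_\infty\le1$ suffices. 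This covers the degenerate cases $A=\emptyset$ or $\omega\equiv0$; if $g\equiv0$ the inequality is trivial. Multiplying by $\eta$ yields
\[
\eta\sum_{u\in U}d_A(u)+\sum_{u\in U}\sum_{m\in N(u)\cap M}\omega(m)\le \eta(s-1)|U|+\eta sa+s\sum_{m\in M}\omega(m).
\]

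Next, bound the remaining portion $(1-\eta)\sum_{u\in U}d_A(u)$ trivially by $(1-\eta)a|U|$, using $d_A(u)\le|A|=a$. Adding this to the previous display gives exactly the claimed inequality.

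There is no real obstacle. The only point requiring care is choosing the convex split $d_A=(1-\eta)d_A+\eta d_A$, so that the rescaled function $\mathbf 1_A+\omega/\eta$ stays bounded by $1$. A naive application with $g=\mathbf 1_A+\omega$ would only give $(s-1)|U|+s(a+\sum\omega)$, which is weaker here.
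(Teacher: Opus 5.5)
Your proposal is correct and follows the paper's proof: you rescale the weight to $\mathbf 1_A+\omega/\eta$, apply Lemma~\ref{lem:rooted-weighted-general} with $f\equiv1$ on $U$, multiply by $\eta$, and add the trivial bound $(1-\eta)\sum_{u\in U}d_A(u)\le(1-\eta)a|U|$.
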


\begin{proof}
Define $\widetilde\omega:V(G)\to[0,1]$ with
$\widetilde\omega(y)=1$ for $y\in A$, $\widetilde\omega(y)=\omega(y)/\eta$
for $y\in M$, and $\widetilde\omega(y)=0$ otherwise. By
Lemma~\ref{lem:rooted-weighted-general} with $f\equiv1$ on $U$ and
$g=\widetilde\omega$, we have
\[
        \sum_{u\in U}d_A(u)
        +\eta^{-1}\sum_{u\in U}\sum_{m\in N(u)\cap M}\omega(m)
        \le
        (s-1)|U|+sa+s\eta^{-1}\sum_{m\in M}\omega(m).
\]
Multiplying by $\eta$ yields
\[
        \eta\sum_{u\in U}d_A(u)
        +\sum_{u\in U}\sum_{m\in N(u)\cap M}\omega(m)
        \le
        \eta(s-1)|U|+\eta sa+s\sum_{m\in M}\omega(m).
\]
By $(1-\eta)\sum_{u\in U}d_A(u)\le(1-\eta)a|U|$, Lemma \ref{lem:two-level} holds.
\end{proof}

\subsection{\texorpdfstring{$A_\alpha$}{A-alpha}-Rayleigh switching}

This subsection introduces the $A_\alpha$ switching identities used in the
extremal comparisons.

\begin{lemma}
\label{lem:aalpha-switch}
Let $G$ be a graph, let $u\in V(G)$, and let $G'$ be obtained from $G$ by changing only the edges incident with $u$. For every vector $x$,
$$
        x^T(A_\alpha(G')-A_\alpha(G))x
        =
        \sum_{y\in N_{G'}(u)\setminus N_G(u)}\Phi_\alpha(x_u,x_y)
        -
        \sum_{y\in N_G(u)\setminus N_{G'}(u)}\Phi_\alpha(x_u,x_y).
$$
In particular, if the right side is positive at a Perron vector of $G$, then $\rho_\alpha(G')>\rho_\alpha(G)$.
\end{lemma}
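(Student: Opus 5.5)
We prove the identity by expanding the quadratic form $x^TA_\alpha(G)x=\sum_{vw\in E(G)}\Phi_\alpha(x_v,x_w)$, which was recorded at the start of this subsection. Since $G$ and $G'$ have the same vertex set and differ only in edges incident with $u$, we have $E(G')\setminus E(G)=\{uy: y\in N_{G'}(u)\setminus N_G(u)\}$ and $E(G)\setminus E(G')=\{uy: y\in N_G(u)\setminus N_{G'}(u)\}$. Every edge common to both graphs contributes the same term $\Phi_\alpha(x_v,x_w)$ to both sums, so it cancels in the difference. The remaining terms are exactly the displayed ones. The diagonal (degree) part needs no separate treatment, because $\Phi_\alpha$ already splits the $\alpha D$ contribution edge by edge: each edge $vw$ contributes $\alpha(x_v^2+x_w^2)$. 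The change of degrees at $u$ and at each affected $y$ is therefore captured automatically.

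For the consequence, let $x$ be a nonnegative Perron vector of $A_\alpha(G)$ with $\|x\|_2=1$, so that $x^TA_\alpha(G)x=\rho_\alpha(G)$. By the Rayleigh principle for the symmetric matrix $A_\alpha(G')$,
\[
\rho_\alpha(G')\ge x^TA_\alpha(G')x=\rho_\alpha(G)+x^T\bigl(A_\alpha(G')-A_\alpha(G)\bigr)x>\rho_\alpha(G),
\]
whenever the right side of the identity is positive at $x$. If the Perron vector is not normalized, the sign of the quadratic form is unchanged by positive scaling, so the conclusion still holds.

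There is no real obstacle here. The only point needing care is to confirm that the degree terms are fully accounted for by the edge decomposition, which the $\Phi_\alpha$ form guarantees.
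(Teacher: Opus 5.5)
Your proposal is correct and takes essentially the same route as the paper. You expand $x^TA_\alpha(G)x=\sum_{vw\in E(G)}\Phi_\alpha(x_v,x_w)$, cancel the common edges so only the edges at $u$ that were added or deleted remain, and then apply the Rayleigh principle at a Perron vector of $G$; your remark that the degree terms are absorbed edge by edge into $\Phi_\alpha$ makes explicit what the paper's one-line justification leaves implicit.
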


\begin{proof}
The identity follows by summing the edge contribution $\Phi_\alpha(x_a,x_b)$ over the edges that are added or deleted. Let $x$ be a nonzero Perron vector of $A_\alpha(G)$. If the difference is positive at $x$, then \[ \frac{x^TA_\alpha(G')x}{x^Tx} > \frac{x^TA_\alpha(G)x}{x^Tx} = \rho_\alpha(G). \] The Rayleigh principle gives $\rho_\alpha(G')>\rho_\alpha(G)$.
\end{proof}

The following lemma applies the switching identity to an extremal $C_{2k+2}$-free graph.

\begin{lemma}
\label{lem:no-new-cycle-switch}
Let $H$ be a $C_{2k+2}$-free graph, and let $L\subseteq V(H)$ be such that any two distinct vertices of $L$ have more than $2k+2$ common neighbors in $H$. Let $u\in V(H)\setminus L$, and let $H'$ be obtained from $H$ by replacing $N_H(u)$ with $L$. Then $H'$ is $C_{2k+2}$-free.
\end{lemma}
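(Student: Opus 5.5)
Suppose, for contradiction, that $H'$ contains a copy $C$ of $C_{2k+2}$. Since $H$ is $C_{2k+2}$-free and $H'$ differs from $H$ only at edges incident with $u$, the cycle $C$ must pass through $u$ and use at least one new edge. Write $C=u\,x\,P\,y\,u$, where $x,y$ are the two neighbors of $u$ on $C$; both lie in $N_{H'}(u)=L$, so $x\ne y$ are distinct vertices of $L$. The path $P'=x\cdots y$ obtained by deleting $u$ from $C$ has $2k+1$ vertices and does not contain $u$, so all of its edges are edges of $H-u$, hence of $H$.

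The plan is to reroute through a common neighbor in $H$. By hypothesis $x$ and $y$ have more than $2k+2$ common neighbors in $H$. The path $P'$ occupies $2k+1$ vertices, including $x$ and $y$ (which are not common neighbors of themselves), and we also want to avoid $u$. Thus at most $2k-1+1=2k$ common neighbors are forbidden (interior vertices of $P'$ and possibly $u$). Since there are more than $2k+2$ common neighbors, we can pick a common neighbor $w\notin V(P')\cup\{u\}$. Then $x\,P'\,y\,w\,x$ is a cycle in $H$ of length $(2k+1)+1=2k+2$, using only edges of $H$: the edges of $P'$ lie in $H-u$, and $xw,yw\in E(H)$. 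This contradicts the assumption that $H$ is $C_{2k+2}$-free.

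The only point needing care is the bookkeeping. We must check that $P'$ avoids $u$, which holds since $C$ is a cycle visiting $u$ once, so its edges are genuinely old edges of $H$, and not edges that were deleted or added in the switch. We must also confirm that enough common neighbors remain after excluding $V(P')\cup\{u\}$. The margin "more than $2k+2$" is ample for this, so I expect no real obstacle.
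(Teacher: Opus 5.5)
Your proof is correct and follows the paper's argument: you take the two $L$-neighbours of $u$ on the cycle and replace the segment through $u$ by a common $H$-neighbour lying off the cycle, which yields a $C_{2k+2}$ in $H$. Your count is also right: at most $2k$ vertices of the cycle can be common neighbours of $x$ and $y$, so the hypothesis of more than $2k+2$ common neighbours is more than enough.
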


\begin{proof}
Suppose that $H'$ contains a cycle $C$ of order $2k+2$. Since $H$ is $C_{2k+2}$-free, the cycle $C$ uses at least one edge incident with $u$ that is not present in $H$. 
Then in graph $H'$, the two neighbors of $u$ on $C$ are two distinct vertices $a,b\in L$. By assumption, $a$ and $b$ have more than $2k+2$ common neighbors in $H$, so we may choose $q\in N_H(a)\cap N_H(b)$ with $q\notin V(C)$. Replacing the segment $aub$ of $C$ by $aqb$ gives a cycle of order $2k+2$ in $H$, a contradiction.
\end{proof}

\begin{lemma} \label{lem:switch-to-L} 
Let $0\le\alpha<1$, and let $H$ be an $n$-vertex $C_{2k+2}$-free graph with maximum $A_\alpha$-spectral radius among all $n$-vertex $C_{2k+2}$-free graphs. Let $v$ be a positive Perron vector of $A_\alpha(H)$, and let $L\subseteq V(H)$. Suppose that, for some $u\in V(H)\setminus L$, the graph obtained from $H$ by replacing $N_H(u)$ with $L$ is still $C_{2k+2}$-free. Then \[ \sum_{y\in N_H(u)}\Phi_\alpha(v_u,v_y) \ge \sum_{\ell\in L}\Phi_\alpha(v_u,v_\ell). \] 
\end{lemma}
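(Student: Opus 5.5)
The plan is to argue by contradiction using the switching identity of Lemma~\ref{lem:aalpha-switch} together with the extremality of $H$. Let $H'$ be the graph obtained from $H$ by replacing $N_H(u)$ with $L$. By hypothesis $H'$ is an $n$-vertex $C_{2k+2}$-free graph. Since $H$ is extremal, we have $\rho_\alpha(H')\le\rho_\alpha(H)$.

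Suppose, contrary to the claim,
\[
\sum_{\ell\in L}\Phi_\alpha(v_u,v_\ell)>\sum_{y\in N_H(u)}\Phi_\alpha(v_u,v_y).
\]
The only edges that change between $H$ and $H'$ are those incident with $u$. The edges $uy$ with $y\in N_H(u)\cap L$ appear in both graphs, so their $\Phi_\alpha$ terms cancel. The remaining terms are exactly the sums over $L\setminus N_H(u)$ and $N_H(u)\setminus L$. Note also that $u\notin L$, so $H'$ has no loop.

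Lemma~\ref{lem:aalpha-switch} then gives
\[
v^T(A_\alpha(H')-A_\alpha(H))v=\sum_{\ell\in L\setminus N_H(u)}\Phi_\alpha(v_u,v_\ell)-\sum_{y\in N_H(u)\setminus L}\Phi_\alpha(v_u,v_y).
\]
The right-hand side equals $\sum_{\ell\in L}\Phi_\alpha(v_u,v_\ell)-\sum_{y\in N_H(u)}\Phi_\alpha(v_u,v_y)$, which is positive by assumption. Since $v$ is a Perron vector of $A_\alpha(H)$, the ``in particular'' clause of Lemma~\ref{lem:aalpha-switch} yields $\rho_\alpha(H')>\rho_\alpha(H)$. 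This contradicts the maximality of $H$, so the stated inequality must hold.

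There is essentially no obstacle here. The only points to check are that $H'$ is a legitimate competitor, meaning it is simple, has the same $n$ vertices, and is $C_{2k+2}$-free, all of which follow from the hypotheses. The other point is the bookkeeping that common neighbours in $N_H(u)\cap L$ cancel in the switching identity. Positivity of $v$ is not even needed beyond $v$ being a nonzero Perron eigenvector, which is exactly what the Rayleigh argument in Lemma~\ref{lem:aalpha-switch} uses.
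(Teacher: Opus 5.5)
Your proof is correct and is essentially the paper's argument. You form the switched graph $H'$, observe that it is an admissible $C_{2k+2}$-free competitor, and apply Lemma~\ref{lem:aalpha-switch} at the Perron vector $v$ to get $\rho_\alpha(H')>\rho_\alpha(H)$ whenever the inequality fails, contradicting extremality; your explicit check that the terms over $N_H(u)\cap L$ cancel is bookkeeping the paper leaves implicit.
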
 
\begin{proof}
Let $H'$ be the graph obtained from $H$ by deleting all edges from $u$ to $N_H(u)\setminus L$ and adding all missing edges from $u$ to $L$. By assumption, $H'$ is $C_{2k+2}$-free. If $\sum_{y\in N_H(u)}\Phi_\alpha(v_u,v_y) < \sum_{\ell\in L}\Phi_\alpha(v_u,v_\ell)$, then by Lemma~\ref{lem:aalpha-switch} applied at the Perron vector $v$,  we have $\rho_\alpha(H')>\rho_\alpha(H)$. This contradicts the extremality of $H$. \end{proof}

The following consequence of Lemma~\ref{lem:switch-to-L} gives a uniform
lower bound on the Perron mass of a vertex whose neighborhood cannot be
improved by switching to $L$.

\begin{lemma}
\label{lem:switching-lower-rhov}
Fix $\eps>0$. There are constants $\eta_0,c_\eps>0$ such that the
following holds for every $0\le\alpha\le1-\eps$. Let $\beta=1-\alpha$.
Let $H$ be a graph, and let $v$ be a positive Perron vector of
$A_\alpha(H)$. Let $L\subseteq V(H)$ be a set of size $h$ such that
$v_\ell\ge1-\eta_0$ for every $\ell\in L$. If $u\notin L$ satisfies
\[
        \sum_{y\sim u}\Phi_\alpha(v_u,v_y)
        \ge
        \sum_{\ell\in L}\Phi_\alpha(v_u,v_\ell),
\]
then $\rho_\alpha(H)v_u\ge c_\eps h$.
\end{lemma}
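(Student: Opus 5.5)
The plan is to expand both sides of the switching hypothesis through the Perron equation at $u$ and argue by contradiction. I take $v$ normalized by $\max_w v_w=1$, the normalization used throughout the paper (for instance in Lemma~\ref{lem:mass-alpha}); this is what makes the hypothesis $v_\ell\ge1-\eta_0$ meaningful. In particular $0<v_y\le1$ for every vertex $y$. I will take $\eta_0=1/2$ and $c_\eps=\eps/16$; any $c_\eps<\eps/12$ should work. If $h=0$ there is nothing to prove, so assume $h\ge1$.

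Write $d=d(u)$ and $S=\sum_{y\sim u}v_y$, so that the Perron equation at $u$ reads
\[
\rho_\alpha(H)v_u=\alpha d v_u+\beta S .
\]

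\emph{Step 1: bound the left-hand side of the hypothesis.} Expanding gives
\[
\sum_{y\sim u}\Phi_\alpha(v_u,v_y)=\alpha d v_u^2+\alpha\sum_{y\sim u}v_y^2+2\beta v_u S .
\]
Since $v_y\le1$, we have $\sum_{y\sim u}v_y^2\le S$. Hence the left side is at most
\[
v_u\,(\alpha d v_u+2\beta S)+\alpha S\;\le\;2v_u\,\rho_\alpha(H)v_u+\frac{\alpha}{\beta}\,\rho_\alpha(H)v_u ,
\]
where the last step uses $\alpha d v_u\le\rho_\alpha(H)v_u$ and $\beta S\le\rho_\alpha(H)v_u$.

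\emph{Step 2: bound the right-hand side.} Using $v_\ell\ge1/2$ and discarding the term $\alpha v_u^2$, each $\ell\in L$ contributes
\[
\Phi_\alpha(v_u,v_\ell)\ge\alpha v_\ell^2+2\beta v_uv_\ell\ge\tfrac14\alpha+\beta v_u\ge\tfrac14(\alpha+\beta v_u).
\]
Summing over $L$, the right side is at least $\tfrac h4(\alpha+\beta v_u)$.

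\emph{Step 3: conclude by contradiction.} Suppose $\rho_\alpha(H)v_u<c_\eps h$. By Step 1 the left side is less than
\[
c_\eps h\Bigl(2v_u+\frac{\alpha}{\beta}\Bigr).
\]
Now use $\beta\ge\eps$ in two places. First, $2v_u\le(2/\eps)\beta v_u$. Second, $\alpha/\beta\le\alpha/\eps$. So the left side is less than $(2c_\eps/\eps)\,h(\alpha+\beta v_u)$. With $c_\eps=\eps/16$ this is $\tfrac h8(\alpha+\beta v_u)$, which is strictly below the lower bound $\tfrac h4(\alpha+\beta v_u)$ from Step 2. The strictness holds because $\alpha+\beta v_u>0$: either $\alpha>0$, or $\beta=1$ and $v_u>0$. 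This contradicts the hypothesis, so $\rho_\alpha(H)v_u\ge c_\eps h$.

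\emph{Main obstacle.} The only delicate point is making the bound uniform in $\alpha$. When $\alpha$ is near $0$, the right side is driven by the term $\beta v_u$, which can be small. When $\alpha$ is large, it is driven by $\alpha$ alone. The proof therefore has to compare both sides against the same combination $\alpha+\beta v_u$, rather than splitting into the cases $\alpha\le1/2$ and $\alpha>1/2$. The term $\alpha\sum_{y\sim u}v_y^2$ has to be absorbed through $\alpha/\beta$, and this is exactly where the assumption $\alpha\le1-\eps$ enters.
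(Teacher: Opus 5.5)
Your proof is correct and is essentially the paper's argument. You use the same Perron-equation bound $\sum_{y\sim u}\Phi_\alpha(v_u,v_y)\le 2v_u\,\rho_\alpha(H)v_u+\frac{\alpha}{\beta}\rho_\alpha(H)v_u$ and the same lower bound on the $L$-side, and your comparison of both sides against $\alpha+\beta v_u$ replaces the paper's case split on $\alpha\ge\beta v_u$ versus $\alpha<\beta v_u$, which is a purely cosmetic difference. One small point: the factor $2$ comes from $\alpha d v_u+2\beta S\le 2(\alpha d v_u+\beta S)$, not from bounding the two terms separately as you wrote, although a factor $3$ would still give the contradiction with $c_\eps=\eps/16$.
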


\begin{proof}
Let $s=v_u$. Since $v_\ell\ge1-\eta_0$ for every $\ell\in L$, we have
\[
        \sum_{\ell\in L}\Phi_\alpha(s,v_\ell)
        \ge h\bigl(\alpha(1-\eta_0)^2+2\beta s(1-\eta_0)\bigr).
\]
On the other hand, the Perron equation gives
$\rho_\alpha(H)s=\alpha d(u)s+\beta\sum_{y\sim u}v_y$. Since $0\le v_y\le1$,
\[
\begin{aligned}
        \sum_{y\sim u}\Phi_\alpha(s,v_y)
        =\alpha d(u)s^2+\alpha\sum_{y\sim u}v_y^2
          +2\beta s\sum_{y\sim u}v_y  
        \le 2s\rho_\alpha(H)s+\frac{\alpha}{\beta}\rho_\alpha(H)s .
\end{aligned}
\]
Indeed, the first and third terms are at most $2s\rho_\alpha(H)s$, while
$v_y^2\le v_y$ and $\beta\sum_{y\sim u}v_y\le\rho_\alpha(H)s$ give the
bound for the middle term. Hence
\[
        \rho_\alpha(H)s
        \ge
        h\frac{\alpha(1-\eta_0)^2+2\beta s(1-\eta_0)}
        {2s+\alpha/\beta}.
\]
Choose $\eta_0\le1/2$. We claim that the last fraction is bounded below
by a positive constant depending only on $\eps$. If $\alpha\ge\beta s$,
then $2s+\alpha/\beta\le3\alpha/\beta$, and the fraction is at least
$\beta(1-\eta_0)^2/3\ge\eps/12$. If $\alpha<\beta s$, then
$2s+\alpha/\beta<3s$, and the fraction is at least
$2\beta(1-\eta_0)/3\ge\eps/3$. Thus the fraction is at least
$c_\eps:=\eps/12$, and the lemma follows.
\end{proof}

\section{Proof of Theorem~\ref{thm:main}}

\begin{proof}[\bf Proof of Theorem~\ref{thm:main}]

Fix $\eps>0$, we choose constants
\[
        0<\mu\ll_\eps \tau\ll_\eps \eta\ll_\eps \gamma\ll_\eps \xi\ll_\eps 1,
\]
with $\eta\le \eta_0$, where $\eta_0$ is given by Lemma~\ref{lem:switching-lower-rhov}. We then choose $D_\eps$ sufficiently large, followed by $C_\eps$, and finally $k_\eps$. Throughout the proof of the upper bound and equality case let $0\le\alpha\le1-\eps$, $k\ge k_\eps$, and $n\ge C_\eps k$.

Let $H$ be an $n$-vertex $C_{2k+2}$-free graph maximizing
$\rho_\alpha$. Since $S^+_{n,k}$ is $C_{2k+2}$-free,
\[
        \rho:=\rho_\alpha(H)
        \ge \rho_\alpha(S^+_{n,k})
        > \rho_\alpha(S_{n,k})=:\rho_0 .
\]
We may assume that $H$ is connected. Indeed, if $H$ were disconnected,
then joining its components by a forest would not create a new cycle, and
would give a proper connected supergraph on the same vertex set. This would
increase the $A_\alpha$-spectral radius by Lemma~\ref{lem:strict-monotonicity},
a contradiction to the choice of $H$.

Let $v$ be the positive Perron vector of $A_\alpha(H)$, normalized by
$\max_{u\in V(H)}v_u=1$. Fix a vertex $x$ with $v_x=1$, and define
$
        L=\{u\in V(H):v_u\ge1-\eta\},
$
then $x\in L$.

We first prove an algebraic consequence of the comparison equation.

\begin{claim}
\label{lem:rho-lower-shift}
For every $t\ge \rho_0$, we have
$
        t^2\ge a_kt+\beta^2k(n-k)\ge \alpha(n-1)t+\beta^2k(n-k).
$
\end{claim}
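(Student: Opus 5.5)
The plan is to use the quadratic characterization of $\rho_0$ from Lemma~\ref{lem:comparison-equation}. Put $p(t)=(t-a_k)(t-b_k)-\beta^2k(n-k)$. Then $\rho_0$ is the larger root of $p$. Since $p$ is a monic quadratic, $p(t)\ge0$ for every $t\ge\rho_0$. Expanding, $p(t)\ge 0$ reads
\[
t^2\ge (a_k+b_k)t-a_kb_k+\beta^2k(n-k)=a_kt+\beta^2k(n-k)+b_k(t-a_k).
\]
So the first inequality follows once we show $b_k(t-a_k)\ge0$ for $t\ge\rho_0$.

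For that, note $b_k=\alpha k\ge0$. Lemma~\ref{lem:comparison-gaps} gives $\rho_0-a_k\ge0$; directly, $(\rho_0-a_k)(\rho_0-b_k)=\beta^2k(n-k)\ge0$ and $\rho_0$ is the larger root, so $\rho_0\ge\max(a_k,b_k)$. Hence $t-a_k\ge\rho_0-a_k\ge0$, and the product $b_k(t-a_k)$ is nonnegative. This gives $t^2\ge a_kt+\beta^2k(n-k)$.

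The second inequality follows from $a_k=\alpha(n-1)+\beta(k-1)\ge\alpha(n-1)$, using $\beta\ge0$ and $k\ge1$, together with $t\ge\rho_0\ge0$. Multiplying by $t$ gives $a_kt\ge\alpha(n-1)t$.

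The only point needing care is the sign claim $\rho_0\ge a_k$, that is, that the larger root lies to the right of $a_k$. This holds because the product $(\rho_0-a_k)(\rho_0-b_k)$ is nonnegative: if $\rho_0<a_k$, then $\rho_0$ would have to lie below both $a_k$ and $b_k$, contradicting that it is the larger root (the larger root is at least the midpoint $(a_k+b_k)/2$). Everything else is direct expansion.
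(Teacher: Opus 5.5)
Your proof is correct and is essentially the paper's argument. Both rest on the identity $t^2-a_kt=(t-a_k)(t-b_k)+b_k(t-a_k)$ together with $\rho_0\ge a_k$ and $b_k=\alpha k\ge0$; the paper applies it only at $t=\rho_0$ and then uses that $t^2-a_kt$ is increasing on $[\rho_0,\infty)$, whereas you apply it directly at $t$, using that the monic quadratic $p$ is nonnegative to the right of its larger root.
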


\begin{proof}[ Proof of Claim~\ref{lem:rho-lower-shift}]
By Lemma~\ref{lem:comparison-equation}, $(\rho_0-a_k)(\rho_0-b_k)=\beta^2k(n-k)$. Hence \[\rho_0^2-a_k\rho_0=b_k(\rho_0-a_k)+\beta^2k(n-k)\ge \beta^2k(n-k).\]
The function $f(t)=t^2-a_kt$ is increasing on $[\rho_0,\infty)$,
since $\rho_0\ge a_k$ and $f'(t)=2t-a_k\ge 2\rho_0-a_k$.
Thus for every $t\ge\rho_0$, we have $t^2-a_kt\ge \rho_0^2-a_k\rho_0\ge \beta^2k(n-k)$, and so $t^2\ge a_kt+\beta^2k(n-k)$. Finally, $a_k=\alpha(n-1)+\beta(k-1)\ge\alpha(n-1)$, which gives $a_kt+\beta^2k(n-k)\ge\alpha(n-1)t+\beta^2k(n-k)$.
\end{proof}

Applying Lemma~\ref{lem:mass-alpha} with $\sigma=\mu$, and increasing
$C_\eps$ and $k_\eps$ if necessary, we shall use throughout that
\[
        W(v):=\sum_{u\in V(H)}v_u\le \mu n.
\]
In particular, since $\mu\le\eta$, we have $W(v)\le\eta n$.

\begin{claim}
\label{lem:heavy-degree-alpha}
Every $z\in L$ satisfies
$
d(z)\ge (1-\xi)n.
$
Thus, any two vertices of $L$ have at least $(1-2\xi)n$ common neighbors.
\end{claim}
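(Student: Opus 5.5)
The plan is to expand the Perron equation at $z$ to second order, bound the resulting two-step sums with the rooted Erd\H{o}s--Gallai estimate, and compare with the lower bound on $\rho^2$ from Claim~\ref{lem:rho-lower-shift}.

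\textbf{Step 1: second-order expansion.} Fix $z\in L$, so $v_z\ge 1-\eta$. The eigen-equation at $z$ is $\rho v_z=\alpha d(z)v_z+\beta\sum_{y\sim z}v_y$. Multiplying by $\rho$ and applying the eigen-equation again at each neighbour $y$ gives
\[
\rho^2 v_z=\alpha d(z)\rho v_z+\alpha\beta\sum_{y\sim z}d(y)v_y+\beta^2\sum_{y\sim z}\sum_{w\sim y}v_w .
\]
Since $v_z\le 1$ and $v_y\le1$, this yields
\[
\rho^2 v_z\le \alpha\rho\, d(z)+\alpha\beta\sum_{y\in N(z)}d(y)+\beta^2\sum_{u\in N(z)}\sum_{w\sim u}v_w .
\]

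\textbf{Step 2: the rooted path constraint.} Set $U=N(z)$. The graph $H$ contains no path of order $2k+1$ with both end vertices in $U$. Indeed, such a path has $2k+1$ vertices; if it avoids $z$, closing it through $z$ gives a $C_{2k+2}$. If it contains $z$, a slight modification is needed. The cleanest way is to apply the lemmas to $H-z$, in which every path with ends in $U$ avoids $z$; the edges at $z$ only contribute an extra $O(d(z))\le O(n)$, which is negligible.

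\textbf{Step 3: bounding the two sums.} Apply Lemma~\ref{lem:rooted-set} with $s=k$ and $T=V(H)$:
\[
\sum_{y\in N(z)}d(y)=\sum_{w}d_U(w)\le (k-1)d(z)+kn .
\]
Apply Lemma~\ref{lem:rooted-weighted-general} with $f\equiv1$ on $U$ and $g=v$, so that $\|g\|_\infty=1$. Together with $W(v)\le\mu n$ this gives
\[
\sum_{u\in U}\sum_{w\sim u}v_w\le (k-1)d(z)+k\mu n .
\]
Each bound may absorb an extra additive $O(n)$ from the correction in Step~2.

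\textbf{Step 4: comparison and contradiction.} Suppose $d(z)<(1-\xi)n$. By Claim~\ref{lem:rho-lower-shift}, $\rho^2\ge \alpha(n-1)\rho+\beta^2k(n-k)$. Combining this with Steps~1 and~3 and $v_z\ge1-\eta$ gives
\[
(1-\eta)\bigl[\alpha(n-1)\rho+\beta^2k(n-k)\bigr]\le \alpha\rho\, d(z)+\alpha\beta\bigl(2kn+O(n)\bigr)+\beta^2\bigl(k d(z)+k\mu n+O(n)\bigr).
\]
Compare terms separately.
\begin{itemize}
\item The $\alpha\rho$ terms leave a surplus of at least $\alpha\rho(\xi-\eta)n-\alpha\rho$ on the left.
\item The $\beta^2$ terms leave a surplus of at least $\beta^2kn(\xi-\eta-\mu-k/n)-O(\beta^2 n)$ on the left.
\end{itemize}
Both surpluses are positive, because $\eta,\mu\ll\xi$, $n\ge C_\eps k$ and $k\ge k_\eps$. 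The $O(n)$ corrections are absorbed by the $kn$-order gaps, since $k$ is large.

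The main obstacle is the mixed term $\alpha\beta\cdot 2kn$, which has no matching term on the left. It is absorbed by the $\alpha\rho$ surplus, because $\rho\ge\rho_0\ge\eps\sqrt{kn}$ by Lemma~\ref{lem:spectral-upper-alpha}. This gives $\alpha\rho\,\xi n/2\ge \alpha\eps\xi\sqrt{kn}\,n/2\gg 2\alpha kn$, using $\sqrt{kn}\ge\sqrt{C_\eps}\,k$ and $C_\eps\gg_\eps \xi^{-2}$. So all three groups of terms cannot be balanced, a contradiction. Hence $d(z)\ge(1-\xi)n$.

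\textbf{Step 5: common neighbours.} For $z,z'\in L$, inclusion--exclusion gives $|N(z)\cap N(z')|\ge d(z)+d(z')-n\ge(1-2\xi)n$.
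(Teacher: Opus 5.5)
Your proof is correct and is essentially the paper's argument. You expand $A_\alpha^2v=\rho^2v$ at $z$, bound the two-step sums with the rooted Erd\H{o}s--Gallai lemmas in a graph avoiding $z$ (your $H-z$ in place of the paper's $H[N(z)\cup N_2(z)]$), and compare with Claim~\ref{lem:rho-lower-shift}. Note that the first sentence of your Step~2 is false in $H$ itself, since paths through $z$ can have both ends in $N(z)$, but your $H-z$ fix is exactly right.

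The one difference is bookkeeping. You bound the mixed term with the unweighted Lemma~\ref{lem:rooted-set} and absorb $2\alpha\beta kn$ into the $\alpha\rho\,\xi n$ surplus via $\rho\ge\beta\sqrt{kn}$, which avoids the paper's case split $\alpha\ge\tau$ versus $\alpha<\tau$.
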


\begin{proof}[Proof of Claim~\ref{lem:heavy-degree-alpha}]
Let $z\in L$ and $c=v_z$, then we have $c\ge1-\eta$. 
Let $U=N(z)$ and $W=N_2(z)$, where $N_2(z)$ denotes the set of vertices at distance two from $z$.
The graph $H[U\cup W]$ contains no path of order $2k+1$ with both end vertices in $U$; for otherwise adding $z$ to such a path gives a copy of $C_{2k+2}$.

Applying Lemma~\ref{lem:rooted-weighted-general} in $H[U\cup W]$ with root set $U$, parameter $k$, $f\equiv1$, and $g=v$, we get
$$
\sum_{u\in U}\sum_{\substack{y\in U\cup W\\uy\in E(H)}}v_y\le (k-1)|U|+kW(v).
$$
Since every $u\in U$ is adjacent to $z$, this implies $\sum_{u\in U}\sum_{y\sim u}v_y\le (k-1+c)d(z)+kW(v)$. Applying Lemma~\ref{lem:rooted-weighted-general} again with $f(u)=v_u$ on $U$ and $g\equiv1$ on $U\cup W$, we obtain $\sum_{u\in U}v_u d(u)\le k\sum_{u\in U}v_u+kn\le k\rho/\beta+kn$, where the last inequality follows from the Perron equation at $z$, for $\beta\sum_{u\in U}v_u\le\rho c\le\rho$.

Expanding $A_\alpha^2v=\rho^2v$ at $z$, we obtain
\[
\rho^2c=\alpha\rho c d(z)+\alpha\beta\sum_{u\in U}d(u)v_u+\beta^2\sum_{u\in U}\sum_{y\sim u}v_y\le\bigl(\alpha\rho c+\beta^2(k-1+c)\bigr)d(z)+\alpha k\rho+\alpha\beta kn+\beta^2kW(v).
\]
By Claim~\ref{lem:rho-lower-shift}, $\rho^2c\ge \alpha(n-1)\rho c+\beta^2ck(n-k)$. Let $B_z=\alpha\rho c+\beta^2(k-1+c)$. Combining the last two inequalities, we have
\[
B_z(n-d(z))\le K_\eps\eta kn+K_\eps\bigl(k\rho+kW(v)+k^2\bigr).
\]
Indeed, $c\ge1-\eta$, the terms produced by replacing $c$ with $1$ are bounded by $K_\eps\eta kn$, and $\alpha\beta kn\le 2k\rho$ for $n\ge C_\eps k$ and $\rho\ge a_k\ge\alpha(n-1)$.

If $\alpha\ge\tau$, then $B_z\ge\alpha\rho c\ge \tau^2 n/3$ for $n\ge C_\eps k$, while $W(v)\le\mu n$ and $\rho\le n$ give $K_\eps\eta kn+K_\eps(k\rho+kW(v)+k^2)\le K_\eps kn$; choosing $C_\eps$ sufficiently large such that $K_\eps kn\le \xi nB_z$. If $\alpha<\tau$, then $B_z\ge\beta^2(k-1+c)\ge \eps^2k/2$ for $k\ge k_\eps$, and Lemma~\ref{lem:spectral-upper-alpha} gives $\rho\le \alpha(n-1)+\beta\sqrt{2k(n-1)}\le \tau n+2\sqrt{kn}$. Hence the right side is at most $K_\eps(\eta kn+\tau kn+\mu kn+k\sqrt{kn}+k^2)$, which is at most $\xi nB_z$ by the hierarchy and by the choice of $C_\eps$.
Therefore $n-d(z)\le\xi n$ and we have  $d(z)\ge(1-\xi)n$. The common neighbor bound follows from $|N(a)\cap N(b)|\ge d(a)+d(b)-n$.
\end{proof}

\begin{claim}
\label{lem:L-upper}
$|L|\le k$.
\end{claim}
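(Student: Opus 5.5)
We will argue by contradiction: assume $|L|\ge k+1$ and exhibit a copy of $C_{2k+2}$ in $H$. Pick any $k+1$ distinct vertices $\ell_1,\dots,\ell_{k+1}\in L$. By Claim~\ref{lem:heavy-degree-alpha}, any two vertices of $L$ have at least $(1-2\xi)n$ common neighbours in $H$. Since $\xi\ll_\eps 1$ and $n\ge C_\eps k$, this is far more than $2k+2$; we will use only that it exceeds $3k+3$.

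The cycle will alternate between the chosen vertices of $L$ and connector vertices chosen outside the chosen set:
\[
\ell_1 w_1 \ell_2 w_2 \cdots \ell_{k+1} w_{k+1} \ell_1 .
\]
Here $w_i$ must be a common neighbour of $\ell_i$ and $\ell_{i+1}$, with indices taken modulo $k+1$. We choose the $w_i$ greedily in order $i=1,\dots,k+1$. At step $i$ we require
\[
w_i\in \bigl(N(\ell_i)\cap N(\ell_{i+1})\bigr)\setminus \{\ell_1,\dots,\ell_{k+1},w_1,\dots,w_{i-1}\}.
\]
The excluded set has size at most $2k+1$, while the common neighbourhood has size at least $(1-2\xi)n>2k+1$. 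So such a $w_i$ always exists.

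The resulting $2k+2$ vertices are pairwise distinct, and consecutive ones are adjacent. For $k+1\ge 3$ this is a genuine cycle $C_{2k+2}$ in $H$. This contradicts the hypothesis that $H$ is $C_{2k+2}$-free, so $|L|\le k$.

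The only point needing care is that the numerical bound $(1-2\xi)n>2k+1$ holds. This follows from $\xi\le 1/4$ and $n\ge C_\eps k$ with $C_\eps\ge 8$. All the real work, the degree lower bound for vertices of $L$, is already contained in Claim~\ref{lem:heavy-degree-alpha}, so no step here is a serious obstacle.
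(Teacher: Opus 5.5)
Your proposal is correct and follows essentially the same argument as the paper: take $k+1$ vertices of $L$ and greedily pick distinct common neighbours of consecutive pairs (indices mod $k+1$), avoiding the chosen hubs and earlier connectors, using the $(1-2\xi)n$ common-neighbour bound from Claim~\ref{lem:heavy-degree-alpha}. This produces a $C_{2k+2}$, a contradiction. Your count of excluded vertices ($\le 2k+1$) and the check $(1-2\xi)n>2k+1$ for $\xi\le 1/4$, $C_\eps\ge 8$ make explicit what the paper leaves implicit.
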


\begin{proof}[Proof of Claim~\ref{lem:L-upper}]
If $|L|\ge k+1$, choose distinct vertices $h_1, \ldots,h_{k+1}\in L$. By Claim~\ref{lem:heavy-degree-alpha}, each consecutive pair $h_i,h_{i+1}$, with indices modulo $k+1$, has at least $(1-2\xi)n$ common neighbors. Since $n\ge C_\eps k$ and $\xi$ is small, we may greedily choose distinct vertices
$
r_i\in N(h_i)\cap N(h_{i+1}),
$
$i=1,\ldots,k+1$,
avoiding all previously chosen vertices and all $h_j$. Then
$
h_1r_1h_2r_2\cdots h_{k+1}r_{k+1}h_1
$
is a copy of $C_{2k+2}$, a contradiction.
\end{proof}

Now we let
$
h:=|L|$, and $ r:=k-h.
$
By Claim~\ref{lem:L-upper}, $r\ge0$.

\begin{claim}
\label{lem:first-defect}
We have
$
        r\le \gamma k.
$
Moreover, if $\alpha<\tau$ and $n\ge D_\eps k^3$, then $r=0$.
\end{claim}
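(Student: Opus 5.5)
We compare the second-power Perron equation at the top vertex $x$ (where $v_x=1$) with the comparison inequality of Claim~\ref{lem:rho-lower-shift}. The right-hand side of $\rho^2 = (A_\alpha^2 v)_x$ should be bounded above by something of the form $\alpha(n-1)\rho + \beta^2 h n + \beta^2\eta(k-h)n + (\text{lower order})$. The left-hand side is at least $\alpha(n-1)\rho+\beta^2 k(n-k)$. Subtracting, a defect $r=k-h$ forces roughly $\beta^2 r n(1-\eta)\lesssim$ error terms. With $\beta\ge\eps$, this gives $r\le\gamma k$ once the error is $O(\eta kn)$.

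\textbf{Step 1: the expansion.} Concretely, I would write $U=N(x)$ and expand
\[
\rho^2=\alpha\rho d(x)+\alpha\beta\sum_{u\in U}d(u)v_u+\beta^2\sum_{u\in U}\sum_{y\sim u}v_y .
\]
The first term is at most $\alpha\rho(n-1)$. The second term is bounded as in Claim~\ref{lem:heavy-degree-alpha} by $\alpha k\rho+\alpha\beta kn\le K_\eps k\rho$. This is $O(kn)$, which is unfortunately of the same order as the main term when $\alpha$ is large. So I would split it more carefully into $\sum_{u\in U\cap L}d(u)v_u\le hn$ and the rest, where $v_u<1-\eta$. For the latter, I would apply Lemma~\ref{lem:rooted-weighted-general} with $f=v$ on $U\setminus L$. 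For $\alpha\ge\tau$, I would instead rely on $\rho^2$ having main term $\alpha(n-1)\rho$, with a gain coming from $\alpha\rho(n-1-d(x))$ plus an $\alpha\beta$-weighted count.

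\textbf{Step 2: the key double sum.} For the main double sum, I would use Lemma~\ref{lem:two-level} with root set $U$ (no path of order $2k+1$ with both ends in $U$, as before), $s=k$, $A=L$ (so $a=h$), $M=V\setminus L$, and $\omega=v|_M\le 1-\eta$. After rescaling the $\eta$ of that lemma to the threshold $1-\eta$, the bound takes the form
\[
(\text{weight on }M)\cdot(k-1)|U| + (1-\text{weight})h|U| + O(kW(v)+hk).
\]
The issue is that weights just below $1-\eta$ contribute almost $k$ per vertex of $U$. To fix this, I would split $M$ into $M_{\rm high}=\{v\in[\eta,1-\eta)\}$ and $M_{\rm low}=\{v<\eta\}$. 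The low part contributes $\le \eta(k-1)n+kW(v)$ via Lemma~\ref{lem:two-level} with threshold $\eta$. For the high part, I would show it is small: $|M_{\rm high}|\le W(v)/\eta\le\mu n/\eta$. Each of its vertices has at most $n$ incidences, but the contribution counted from $U$ is $\sum_{m\in M_{\rm high}}d_U(m)v_m$. By Lemma~\ref{lem:rooted-set} with $T=M_{\rm high}$, this is at most $(k-1)|U|\cdot$(?) — which is too weak. Instead I would use the switching Lemma~\ref{lem:switching-lower-rhov}: every non-$L$ vertex of positive use satisfies $\rho v_u\ge c_\eps h$, so vertices of degree $\ge\eta n$ outside $L$ would behave like $L$-vertices. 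Combined with the common-neighbour argument of Claim~\ref{lem:L-upper}, I expect to conclude that vertices outside $L$ have small degree, which yields a bound $\sum_{m\notin L}d_U(m)v_m\le K_\eps(\eta kn+k\sqrt{kn}+k^2)$.

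\textbf{Step 3: conclusion and main obstacle.} Assembling the pieces gives $\beta^2 k(n-k)\le \beta^2 hn+K_\eps(\eta kn+\tau kn+\mu kn+k\sqrt{kn}+k^2)$, hence $\eps^2 rn\le \xi' kn$ and $r\le\gamma k$ by the hierarchy. For the second assertion ($\alpha<\tau$, $n\ge D_\eps k^3$), the goal is an error $o(n)$ rather than $O(\eta kn)$. Here $\rho\le\tau n+2\sqrt{kn}$, and the $\sqrt{kn}$ and $k^2$ terms are $o(n)$ when $n\ge D_\eps k^3$. The remaining $\eta kn$-type terms must be replaced by an error of the form $K_\eps r\sqrt{kn}+K_\eps r^2$, as in the last displayed hierarchy inequality. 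I would do this by rerunning the estimate with the exact $L$-structure now known (each vertex of $L$ has $\ge(1-\xi)n$ common neighbours, and $|L|=k-r$). The aim is to get $\beta^2 rn\le K_\eps r\sqrt{kn}+K_\eps r^2+K_\eps r+(\text{terms}\le\eta rn/4)$, which forces $r=0$. The main obstacle is Step 2, namely controlling the contribution of medium-weight vertices outside $L$ without losing a factor of $k$. I would attack it first via the switching lower bound of Lemma~\ref{lem:switching-lower-rhov}.
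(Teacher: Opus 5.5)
There is a genuine gap, and the decisive one is in Step~1 rather than Step~2.

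You use Claim~\ref{lem:rho-lower-shift} only in the weak form $\rho^2\ge\alpha(n-1)\rho+\beta^2k(n-k)$. This throws away $\beta(k-1)\rho\approx\alpha\beta(k-1)n$. But the middle term $\alpha\beta\sum_{u\in N(x)}d(u)v_u$ of your expansion is genuinely large: by Claim~\ref{lem:heavy-degree-alpha} it is at least $\alpha\beta(h-1)(1-\xi)(1-\eta)n$. For $\alpha\ge1/2$ and $r\le k/3$ this exceeds even the optimistic gain $\beta^2 rn$. The term $\alpha\rho(n-1-d(x))$ you hope to exploit may simply be $0$. So with the weak form, no refinement of the other estimates yields a contradiction. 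Your split of $N(x)$ into $L$ and non-$L$ parts does not help either, since Lemma~\ref{lem:rooted-weighted-general} on the non-$L$ part still gives a bound of order $k(1-\eta)n$.

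The paper proceeds as follows:
\begin{enumerate}
\item It keeps the full bound $\rho^2\ge a_k\rho+\beta^2k(n-k)$.
\item It bounds the middle term by $\alpha k(\rho-\alpha D_X)+\alpha\beta k(n-1)$, using the rooted lemma with $f=v$, $g\equiv1$ and the Perron equation at $x$.
\item It replaces $D_X$ by $n-1$, which is legitimate because its coefficient $C_D$ is nonnegative.
\item It inserts $\rho\le a_h+\beta W_M$ or $\rho\ge a_k$ according to the sign of $\alpha k-\beta(k-1)$.
\end{enumerate}
All terms of order $kn$ then cancel exactly. Only terms of size $\alpha\beta n$, $kW_M$ and $k^2$ remain against the gain.

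With errors that small, the application you set aside is already sufficient. Apply Lemma~\ref{lem:two-level} with threshold $1-\eta$, $A=L\setminus\{x\}$, $M=V(H)\setminus L$ and $\omega=v|_M$. This gives the bound $(k-1-\eta r)D_X+(1-\eta)k(h-1)+kW_M$. Its gain of only $\eta r$ per root yields $c_\eps\eta rn\le K_\eps k^2+K_\eps kW_M+K_\eps\alpha n$. Together with $W_M\le\mu n$ and $\mu\ll_\eps\eta\gamma$, this rules out $r\ge\gamma k$.

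Your replacement for Step~2 would not close in any case. Lemma~\ref{lem:switching-lower-rhov} bounds $\rho v_u$ from below and says nothing about degrees being small. The cycle argument of Claim~\ref{lem:L-upper} can at best show that at most about $r$ vertices outside $L$ have degree well above $\xi n$; it cannot exclude them. Such vertices, with weight just below $1-\eta$ and degree close to $n$, contribute about $(1-\eta)rn$. So the bound $K_\eps(\eta kn+\cdots)$ you aim for is essentially as strong as the claim itself.

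Finally, errors of order $\eta kn$ can never give $r=0$. In the paper the second assertion comes from the same inequality. For $\alpha<\tau$ and $n\ge D_\eps k^3$, each of $K_\eps k^2$, $K_\eps\alpha n$ and $K_\eps kW_M$ is at most $c_\eps\eta n/4$. The last one uses the explicit bound $W(v)\le16\eps^{-1}\sqrt{k}\,n^{1/2+1/(k+1)}$ from the proof of Lemma~\ref{lem:mass-alpha}.
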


\begin{proof}[Proof of Claim~\ref{lem:first-defect}]
Assume $r>0$. Let $x\in L$ satisfy $v_x=1$, and let $X=N(x)$, $A=L\setminus\{x\}$, and $M=V(H)\setminus L$. Write $D_X=|X|$, $S_X=\sum_{u\in X}v_u$, and $W_M=\sum_{m\in M}v_m$. Every vertex of $M$ has Perron coordinate less than $1-\eta$. The graph $H-x$ contains no path of order $2k+1$ with both endpoints in $X$, for otherwise adding $x$ gives a copy of $C_{2k+2}$.

Taking $A=L\setminus\{x\}$, $M=V(H)\setminus L$, $\omega(m)=v_m$, 
and applying Lemma~\ref{lem:two-level} with parameter $1-\eta$,
we obtain
\begin{equation*}
\label{eq:first-defect-two-level}
\sum_{u\in X}\sum_{\substack{y\in A\cup M\\uy\in E(H)}}v_y\le (k-1-\eta r)D_X+(1-\eta)k(h-1)+kW_M.
\end{equation*}
Since each $u\in X$ is adjacent to $x$, it follows that
\begin{equation*}
\label{eq:first-defect-neighbor-sum}
\sum_{u\in X}\sum_{y\sim u}v_y\le (k-\eta r)D_X+(1-\eta)k(h-1)+kW_M.
\end{equation*}
By Lemma~\ref{lem:rooted-weighted-general} (applied in $H-x$ with root set $X$, $f(u)=v_u$, and $g\equiv1$), we have
\begin{equation*}
\label{eq:first-defect-rooted-weighted}
\sum_{u\in X}v_u(d(u)-1)\le (k-1)S_X+k(n-1).
\end{equation*}
Hence
\begin{equation*}
\label{eq:first-defect-degree-sum}
\sum_{u\in X}d(u)v_u\le kS_X+k(n-1).
\end{equation*}
The Perron equation at $x$ is
\begin{equation}
\label{eq:first-defect-perron-x}
\rho=\alpha D_X+\beta S_X.
\end{equation}
Expanding $A_\alpha^2v=\rho^2v$ at $x$, 
and using the preceding bounds together with the Perron equation at $x$, we have
\begin{equation}
\label{eq:first-defect-rho-upper}
\rho^2\le \alpha\rho D_X+\alpha k(\rho-\alpha D_X)+\alpha\beta k(n-1)+\beta^2\bigl((k-\eta r)D_X+(1-\eta)k(h-1)+kW_M\bigr).
\end{equation}
By Claim~\ref{lem:rho-lower-shift}, we have
\begin{equation}
\label{eq:first-defect-rho-lower}
\rho^2\ge a_k\rho+\beta^2k(n-k).
\end{equation}
Subtracting \eqref{eq:first-defect-rho-lower} from \eqref{eq:first-defect-rho-upper}, we get
\begin{equation}
\label{eq:first-defect-subtracted}
0\le C_DD_X+(\alpha k-a_k)\rho+\alpha\beta k(n-1)+\beta^2\bigl((1-\eta)k(h-1)+kW_M-k(n-k)\bigr),
\end{equation}
where $C_D=\alpha\rho-\alpha^2k+\beta^2(k-\eta r)$. Since $\rho\ge\rho_0\ge a_k$, we have
\[
C_D\ge \alpha a_k-\alpha^2k+\beta^2(k-\eta r)=\alpha^2(n-k-1)+\alpha\beta(k-1)+\beta^2(k-\eta r)\ge0.
\]
Thus $D_X\le n-1$ may be used in \eqref{eq:first-defect-subtracted}. Hence
\begin{equation}
\label{eq:first-defect-after-D}
0\le C_\rho\rho+E,
\end{equation}
where $C_\rho=\alpha k-\beta(k-1)$ and
$
E=-\alpha^2k(n-1)+\beta^2(k-\eta r)(n-1)+\alpha\beta k(n-1)+\beta^2\bigl((1-\eta)k(h-1)+kW_M-k(n-k)\bigr).
$
Moreover, by \eqref{eq:first-defect-perron-x} and $v_y\le1$, we have
\begin{equation}
\label{eq:first-defect-rho-ah}
\rho\le \alpha(n-1)+\beta(h-1+W_M)=a_h+\beta W_M.
\end{equation}
If $C_\rho\ge0$, then using \eqref{eq:first-defect-rho-ah} in \eqref{eq:first-defect-after-D} and expanding with $h=k-r$ gives
\(
0\le -\beta^2\eta r(n-1)+\alpha\beta(n-1)+\beta(\alpha k+\beta)W_M+K_\eps k^2.
\)
If $C_\rho<0$, then using $\rho\ge a_k$ in \eqref{eq:first-defect-after-D} and expanding with $h=k-r$ gives
\(
0\le -\beta^2\eta r(n-1)+\alpha\beta(n-1)+\beta^2kW_M+K_\eps k^2.
\)
In both cases, since $\beta\ge\eps$ and $n\ge C_\eps k$, we obtain
\begin{equation}
\label{eq:first-defect-main}
c_\eps\eta rn\le K_\eps k^2+K_\eps kW_M+K_\eps\alpha n.
\end{equation}

We first prove $r\le\gamma k$. Since $W_M\le W(v)\le\mu n$, \eqref{eq:first-defect-main} gives $c_\eps\eta rn\le K_\eps k^2+K_\eps\mu kn+K_\eps n$. If $r\ge\gamma k$, then the left-hand side is at least $c_\eps\eta\gamma kn$, while the right-hand side is smaller than this for $k\ge k_\eps$, $n\ge C_\eps k$, and $\mu\ll_\eps\eta\gamma$. Hence $r<\gamma k$.

Assume that $\alpha<\tau$, $n\ge D_\eps k^3$, and $r\ge1$. By the proof of Lemma~\ref{lem:mass-alpha}, we have
$
W(v)\le 16\eps^{-1}\sqrt{k}\,n^{1/2+1/(k+1)}.
$
Since $-1/2+1/(k+1)<0$ and $n\ge D_\eps k^3$, choosing $D_\eps$ and $k_\eps$ sufficiently large gives
$kW_M\le kW(v)\le \frac{c_\eps\eta}{4}n$.
Also, $K_\eps\alpha n\le K_\eps\tau n\le c_\eps\eta n/4$, and $K_\eps k^2\le c_\eps\eta n/4$, by the choices of $\tau$ and $D_\eps$. These estimates contradict \eqref{eq:first-defect-main}, since $r\ge1$. Therefore $r=0$ whenever $\alpha<\tau$ and $n\ge D_\eps k^3$.
\end{proof}

Suppose for a contradiction that
$
        h=|L|=k-r$ and $1\le r\le\gamma k.
$
Let
$
        R=V(H)\setminus L.
$
For $u\in R$, write $A(u)=N(u)\cap L$, and define
$
        F=\{u\in R:|A(u)|\ge2\}$, $Z=R\setminus F.
$

The estimate
\begin{equation} \label{WR}
   W_R:=\sum_{u\in R}v_u\le K_\eps\rho 
\end{equation}  
will be used only in Claim~\ref{lem:low-attachment-control}. If
$\alpha\ge\tau$, then $\rho\ge a_k\ge \alpha(n-1)\ge \tau n/2$ and
$W_R\le W(v)\le\mu n$, so \eqref{WR} follows. If $\alpha<\tau$, then Claim~\ref{lem:first-defect} already gives $r=0$ whenever $n\ge D_\eps k^3$;
under the present contradiction we may therefore assume $n<D_\eps k^3$.
By the Perron equation and Lemma~\ref{lem:even-circuit}, we have
$
        \rho W(v)\le 2e(H)\le16k n^{1+1/(k+1)}\le32kn,
$
for $k\ge k_\eps$. Since $\rho\ge\beta\sqrt{k(n-k)}\ge c_\eps\sqrt{kn}$, we
have $kn\le K_\eps\rho^2$, and hence $W_R\le W(v)\le K_\eps\rho$. Thus
\eqref{WR} holds in all cases relevant to this section.

\begin{claim}
\label{lem:F-path-obstruction}
The graph $H[R]$ contains no path of order $2r+3$ with both end vertices in $F$.
\end{claim}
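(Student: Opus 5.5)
Suppose, for a contradiction, that $H[R]$ contains a path $P=p_0p_1\cdots p_{2r+2}$ of order $2r+3$ with $p_0,p_{2r+2}\in F$. The plan is to close $P$ into a cycle of length exactly $2k+2$ by routing through $L$, using the vertices of $L$ together with common neighbours of pairs of $L$-vertices. The count works as follows. The path $P$ has $2r+2$ edges and $2r+3$ vertices. We need a cycle with $2k+2=2(h+r)+2$ edges, so the closing part must contribute exactly $2h$ edges. We therefore look for an $L$-path from $p_{2r+2}$ back to $p_0$ that visits all $h$ vertices of $L$ and alternates with $h-1$ new connector vertices of $R\setminus V(P)$. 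That is,
\[
p_{2r+2}\,\ell_1\,q_1\,\ell_2\,q_2\cdots q_{h-1}\,\ell_h\,p_0 ,
\]
which has $2+2(h-1)=2h$ edges, as required.

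First we choose the endpoints in $L$. Since $p_0\in F$ we have $|A(p_0)|\ge2$, and likewise $|A(p_{2r+2})|\ge 2$. So we can pick distinct $\ell_1\in A(p_{2r+2})$ and $\ell_h\in A(p_0)$: if $A(p_{2r+2})=A(p_0)=\{a,b\}$, take $\ell_1=a$, $\ell_h=b$; otherwise a choice is immediate. This step needs $h\ge2$. If $h=1$ we need $\ell_1=\ell_h$ adjacent to both $p_0$ and $p_{2r+2}$, and then the cycle $\ell_1 p_0\cdots p_{2r+2}\ell_1$ has length $2r+4=2k+2$ only when $r=k-1$. This is excluded because $r\le\gamma k$ and $k\ge k_\eps$ give $h=k-r\ge(1-\gamma)k\ge2$. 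Next, order the remaining vertices of $L\setminus\{\ell_1,\ell_h\}$ arbitrarily as $\ell_2,\dots,\ell_{h-1}$.

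Next we choose the connectors greedily. By Claim~\ref{lem:heavy-degree-alpha}, any two vertices of $L$ have at least $(1-2\xi)n$ common neighbours. For each $i=1,\dots,h-1$, pick $q_i\in N(\ell_i)\cap N(\ell_{i+1})$ outside $L\cup V(P)\cup\{q_1,\dots,q_{i-1}\}$. The forbidden set has size at most $h+(2r+3)+h\le 2k+3$, which is far smaller than $(1-2\xi)n$ because $n\ge C_\eps k$. Hence the choices exist, and the $q_i$ lie in $R$ as required. The resulting closed walk uses distinct vertices, so it is a cycle. It has exactly $(2r+2)+2h=2k+2$ edges, so it is a copy of $C_{2k+2}$ in $H$, contradicting $C_{2k+2}$-freeness.

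The main (mild) obstacle is the parity and endpoint bookkeeping: making sure $\ell_1\neq\ell_h$ can always be chosen from the two $F$-attachments, and that the degenerate case $h=1$ is ruled out by $r\le\gamma k$ from Claim~\ref{lem:first-defect}. Everything else is a direct greedy embedding, of the same kind as in the proof of Claim~\ref{lem:L-upper}.
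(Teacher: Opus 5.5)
Your proof is correct and matches the paper's argument: choose distinct attachments $\ell_1\in A(p_{2r+2})$ and $\ell_h\in A(p_0)$, pass through all of $L$ using $h-1$ greedily chosen common neighbours that avoid $L\cup V(P)$, and count $(2r+3)+h+(h-1)=2k+2$. Your separate treatment of $h=1$ is unnecessary, because $p_0\in F$ already gives $|A(p_0)|\ge2$ and hence $h\ge2$; when $h=1$ the set $F$ is empty and the claim holds vacuously.
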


\begin{proof}[Proof of Claim~\ref{lem:F-path-obstruction}]
Suppose there is such a path
$
P=p_0p_1\cdots p_{2r+2}$, $p_0,p_{2r+2}\in F.
$
Choose $a\in A(p_0)$ and $b\in A(p_{2r+2})$ with $a\ne b$. This is possible because both end vertices lie in $F$. List the vertices of $L$ as
$
b=\ell_1,\ell_2,\ldots,\ell_h=a.
$
For each $i=1,\ldots,h-1$, Claim~\ref{lem:heavy-degree-alpha} gives at least $(1-2\xi)n$ common neighbors of $\ell_i$ and $\ell_{i+1}$. Greedily choose pairwise distinct vertices
$
q_i\in N(\ell_i)\cap N(\ell_{i+1})
$
avoiding $P\cup L$ and all previously chosen $q_j$. Then
$
p_0p_1\cdots p_{2r+2}bq_1\ell_2q_2\cdots q_{h-1}ap_0
$
is a cycle of order
$
(2r+3)+h+(h-1)=2r+2h+2=2k+2,
$
a contradiction.
\end{proof}

The next estimate is a key application of $A_\alpha$-Rayleigh switching.

\begin{claim}
\label{lem:low-attachment-control}
There is a constant $K_\eps>0$ such that
$
        |Z|\le 3\xi n$, $Q_Z:=\sum_{z\in Z}v_z^2\le K_\eps(r+1)$
and
$$
        J_{ZZ}:=\sum_{z\in Z}\sum_{\substack{y\in Z\\zy\in E(H)}}v_y\le \xi k|Z|.
$$
\end{claim}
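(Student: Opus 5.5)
The three bounds need different tools. I will get $|Z|$ by counting non-edges to $L$, and $Q_Z$ from the switching lemmas combined with the path obstruction of Claim~\ref{lem:F-path-obstruction}. For $J_{ZZ}$ I only have a partial plan; its main case is reduced to the same weighted path lemma.

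\emph{Bounding $|Z|$.} Every $z\in Z$ has at most one neighbour in $L$, so it is a non-neighbour of at least $h-1$ vertices of $L$. By Claim~\ref{lem:heavy-degree-alpha}, each $\ell\in L$ has at most $\xi n$ non-neighbours. Double counting the non-edges between $L$ and $Z$ gives
\[
(h-1)|Z|\le h\xi n .
\]
Since $h=k-r\ge(1-\gamma)k\ge2$, this yields $|Z|\le 2\xi n\le 3\xi n$.

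\emph{Bounding $Q_Z$: setup.} Fix $z\in Z$. Any two vertices of $L$ have at least $(1-2\xi)n>2k+2$ common neighbours (Claim~\ref{lem:heavy-degree-alpha}). Hence Lemma~\ref{lem:no-new-cycle-switch} shows that replacing $N(z)$ by $L$ keeps the graph $C_{2k+2}$-free. Lemma~\ref{lem:switch-to-L} then gives
\[
\sum_{y\sim z}\Phi_\alpha(v_z,v_y)\ge\sum_{\ell\in L}\Phi_\alpha(v_z,v_\ell)\ge h\bigl(\alpha(1-\eta)^2+2\beta(1-\eta)v_z\bigr).
\]
At most one neighbour of $z$ lies in $L$, so the left side is essentially a sum over $N(z)\cap R$. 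Using the Perron equation $\rho v_z=\alpha d(z)v_z+\beta\sum_{y\sim z}v_y$, the left side is at most $(2v_z+\alpha/\beta)\rho v_z$, exactly as in Lemma~\ref{lem:switching-lower-rhov}. I will multiply the switching inequality by $v_z$ and sum over $Z$. This compares $h\sum_{z\in Z}v_z^2$, which has coefficient about $2\beta h\approx 2\beta k$, against the quantity
\[
\sum_{z\in Z}v_z\sum_{y\in N(z)\cap R}v_y\,\bigl(2\beta v_z+\alpha v_y\bigr)
\]
plus lower-order terms.

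\emph{Bounding $Q_Z$: the path obstruction.} The quantity above is controlled by Lemma~\ref{lem:rooted-weighted-general}, applied inside $H[R]$ with parameter $s=r+1$. This needs the obstruction of Claim~\ref{lem:F-path-obstruction} in slightly extended form. Any path in $H[R]$ of order $2r+3$ whose two endpoints have \emph{distinct} neighbours in $L$ closes into a $C_{2k+2}$, by the same greedy $L$-routing through common neighbours. I will partition $Z$ according to its unique $L$-neighbour and use $F\cup Z_\ell$ as root sets. Lemma~\ref{lem:rooted-weighted-general} then bounds the weighted incidences by
\[
r\,\|g\|_\infty\sum f+(r+1)\|f\|_\infty\sum g .
\]
With $f=g=v$ on these root sets, and using $W_R\le K_\eps\rho$ from \eqref{WR}, this is of order $(r+1)\rho$. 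Comparing with the coefficient $\approx 2\beta k$ on the other side, and using $\rho\le K_\eps k$ in the relevant regime (or rescaling by $\rho v_z\ge c_\eps h$ from Lemma~\ref{lem:switching-lower-rhov}), should give $Q_Z\le K_\eps(r+1)$.

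\emph{Bounding $J_{ZZ}$.} For vertices of $Z$ that have an $L$-neighbour, I will apply Lemma~\ref{lem:rooted-set} and Lemma~\ref{lem:rooted-weighted-general} to the graph $H[Z]$ together with the extended obstruction above. Paths whose two endpoints attach to distinct vertices of $L$ give incidences at most $(r+1)(|Z|+\sum_{Z}v)$. Since $r\le\gamma k$ and $\gamma\ll\xi$, this is at most $\xi k|Z|$.

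\emph{Main obstacle.} The hard part is $J_{ZZ}$ for pairs of $Z$-vertices with the \emph{same} $L$-neighbour, or with no $L$-neighbour at all, since the routing argument does not apply to them. For these I would first try to use that $H-L$ is $C_{2k+2}$-free, combined with Lemma~\ref{lem:EG} applied to the components hanging off a single $\ell\in L$. A path of order $2k+1$ through $\ell$ with both endpoints adjacent to $\ell$ would give a $C_{2k+2}$, so such a component contributes at most about $k$ per vertex. I would then try to gain the factor $\xi$ from the smallness of the weights $v_y$ on $Z$, obtained from the $Q_Z$ bound via Cauchy--Schwarz. I am not confident this closes cleanly: the factor $\xi$ in the $J_{ZZ}$ bound is exactly what this step must produce, and I expect most of the difficulty to lie here.
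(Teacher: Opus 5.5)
Your bound $|Z|\le 3\xi n$ is correct and uses the same double count of non-edges to $L$ as the paper. The $Q_Z$ argument, however, has two genuine gaps.

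First, the summed switching inequality gives $Q_Z$ a coefficient of only about $2\beta h\approx 2\beta k$. The error terms you generate (via Lemma~\ref{lem:rooted-weighted-general} and \eqref{WR}) are of order $(r+1)\rho$. The rescue $\rho\le K_\eps k$ is false: $\rho\ge\alpha(n-1)$ and $\rho\ge\beta\sqrt{k(n-k)}$, and $n\ge C_\eps k$ is otherwise unrestricted, so $\rho/k$ is unbounded. At best you get $Q_Z\lesssim (r+1)\rho/k$, which is useless for $J_{ZZ}$.

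In the paper the switching lemmas are used only qualitatively. Lemma~\ref{lem:switching-lower-rhov} gives $\rho v_z\ge c_\eps h\ge c_\eps k$. Hence the at most one $L$-edge at $z$ contributes at most $1\le \xi\rho v_z$ to the Perron equation at $z$. Multiplying the \emph{Perron equation} by $v_z$ and summing over $Z$ gives $(1-\xi)\rho Q_Z\le(\text{$Z$--$Z$ part})+(\text{$Z$--$F$ part})$, with coefficient $\rho$ instead of $h$.

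Second, $F\cup Z_\ell$ is not a valid root set. Two vertices of the same $Z_\ell$ have no distinct $L$-neighbours, so a path of order $2r+3$ between them closes only to a cycle of length $2r+4$ through $\ell$. Vertices of $Z$ with no $L$-neighbour cannot be roots at all. So the $Z$--$Z$ edges are already uncontrolled in the $Q_Z$ step, not only in $J_{ZZ}$.

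The missing idea is to use $|Z|\le 3\xi n$ spectrally; this handles all $Z$--$Z$ edges at once, regardless of how they attach to $L$. The $Z$--$Z$ part equals $v_Z^TA_\alpha(H[Z])v_Z\le\rho_\alpha(H[Z])Q_Z$. Since $H[Z]$ is $C_{2k+2}$-free, Lemma~\ref{lem:spectral-upper-alpha} gives $\rho_\alpha(H[Z])\le 3\xi\alpha n+\beta\sqrt{6\xi kn}\le(1-3\xi)\rho$. This is absorbed, leaving $\xi\rho Q_Z\le(\text{$Z$--$F$ part})$.

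Only here is a path obstruction needed, and Claim~\ref{lem:F-path-obstruction} with root set $F$ alone suffices. Lemma~\ref{lem:rooted-weighted-general} with parameter $r+1$ bounds the $Z$--$F$ part by $\alpha\bigl(r|F|+(r+1)Q_Z\bigr)+\beta(r+1)W_R$. Then $\alpha|F|\le 2\rho$, $\alpha(r+1)\le\xi\rho/2$ and \eqref{WR} give $Q_Z\le K_\eps(r+1)$.

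For $J_{ZZ}$ no case analysis is needed either. By Cauchy--Schwarz and Nikiforov's bound $\lambda(H[Z])\le\sqrt{2k|Z|}$,
\[
J_{ZZ}=\mathbf 1_Z^TA(H[Z])v_Z\le |Z|^{1/2}\lambda(H[Z])Q_Z^{1/2}\le |Z|\sqrt{2K_\eps k(r+1)}\le \xi k|Z|,
\]
using $1\le r\le\gamma k$ and $\gamma\ll_\eps\xi^2$ (with $\gamma$ chosen after $K_\eps$, which depends on $\xi$). Your Erd\H{o}s--Gallai count over components at a single $\ell$ would only give $J_{ZZ}=O(k|Z|)$. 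The factor $\xi$ has to come from $Q_Z=O(r+1)$, which is exactly what your $Q_Z$ step cannot deliver.
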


\begin{proof}[Proof of Claim~\ref{lem:low-attachment-control}]
The almost-dominating property gives
$
        \sum_{u\in R}(h-|A(u)|)=\sum_{\ell\in L}|R\setminus N(\ell)|\le h\xi n.
$
Every $z\in Z$ misses at least $h-1$ vertices of $L$. Since $h=k-r\ge(1-\gamma)k$ and $\gamma\ll1$, this gives $|Z|\le3\xi n$.

Let $z\in Z$. By Claim~\ref{lem:heavy-degree-alpha}, any two vertices of $L$ have more than $2k+2$ common neighbors in $H$, once $C_\eps$ is sufficiently large. Hence Lemma~\ref{lem:no-new-cycle-switch} shows that replacing $N_H(z)$ with $L$ creates no $C_{2k+2}$. By Lemmas~\ref{lem:switch-to-L} and~\ref{lem:switching-lower-rhov}, we have
\(
\rho v_z\ge c_\eps h\ge c_\eps k.
\)
Since $z$ has at most one neighbor in $L$,
$
        \alpha d_L(z)v_z+\beta\sum_{\ell\in A(z)}v_\ell\le1.
$
For $k\ge k_\eps$ this gives
$$
        \alpha d_R(z)v_z+\beta\sum_{\substack{y\in R\\zy\in E(H)}}v_y
        \ge (1-\xi)\rho v_z.
$$
Multiplying by $v_z$ and summing over $z\in Z$, we obtain
$
        (1-\xi)\rho Q_Z\le E_{ZZ}^{(\alpha)}+E_{ZF}^{(\alpha)},
$
where
$$
        E_{ZZ}^{(\alpha)}
        =
        \alpha\sum_{z\in Z}d_Z(z)v_z^2
        +
        \beta\sum_{z\in Z}v_z\sum_{\substack{y\in Z\\zy\in E(H)}}v_y
$$
and
$$
        E_{ZF}^{(\alpha)}
        =
        \alpha\sum_{z\in Z}d_F(z)v_z^2
        +
        \beta\sum_{z\in Z}v_z\sum_{\substack{f\in F\\zf\in E(H)}}v_f.
$$
By Lemma~\ref{lem:spectral-upper-alpha} and $|Z|\le3\xi n$, taking $\xi\ll_\eps1$ gives
$
        E_{ZZ}^{(\alpha)}\le \rho_\alpha(H[Z])Q_Z\le (1-3\xi)\rho Q_Z.
$
Therefore
$
        \xi\rho Q_Z\le E_{ZF}^{(\alpha)}.
$
We now bound $E_{ZF}^{(\alpha)}$. The rooted obstruction Claim~\ref{lem:F-path-obstruction} and Lemma~\ref{lem:rooted-weighted-general} applied in $H[R]$ with root set $F$ and parameter $r+1$ give
$$
        \sum_{z\in Z}v_z\sum_{\substack{f\in F\\zf\in E(H)}}v_f
        \le rW_F+(r+1)W_Z\le (r+1)W_R.
$$
Similarly, applying Lemma~\ref{lem:rooted-weighted-general} with $f\equiv1$ on $F$ and $g(z)=v_z^2\mathbf 1_Z(z)$ gives
$$
        \sum_{z\in Z}d_F(z)v_z^2
        =
        \sum_{f\in F}\sum_{\substack{z\in Z\\fz\in E(H)}}v_z^2
        \le r|F|+(r+1)Q_Z.
$$
Since $\rho\ge a_k\ge\alpha(n-1)$, we have $\alpha|F|\le K_\eps\rho$. Also, because $r\le\gamma k$ and $n\ge C_\eps k$, the term $\alpha(r+1)Q_Z$ is absorbed into the left-hand side. Using $W_R\le K_\eps\rho$, we get
$
        Q_Z\le K_\eps(r+1).
$

Finally, by Cauchy's inequality and Lemma~\ref{lem:spectral-upper-alpha},
$$
        J_{ZZ}
        =
        \mathbf 1_Z^TA(H[Z])v_Z
        \le |Z|^{1/2}\lambda(H[Z])Q_Z^{1/2}
        \le |Z|\sqrt{2kQ_Z}
        \le |Z|\sqrt{2K_\eps k(r+1)}.
$$
Since $r\le\gamma k$ and $\gamma\ll_\eps\xi^2$, the last expression is at most $\xi k|Z|$.
\end{proof}

\begin{claim}
\label{lem:aalpha-completion-ineq}
$
        (\rho-a_h)
        \bigl(\rho-\alpha h-2r-1\bigr)
        \le
        \beta\bigl(\beta h+(1-\eta)r\bigr)(n-h).
$
\end{claim}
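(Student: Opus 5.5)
Throughout, $h=k-r$ with $1\le r\le\gamma k$. The plan is to mimic the quotient computation of Lemma~\ref{lem:comparison-equation}, with the roles of $L$ and $R$ played by the actual sets $L$ and $R=V(H)\setminus L$. We evaluate the Perron equation on $L$ and on $R$ separately and then eliminate. Let $V_L=\min_{\ell\in L}v_\ell\ge1-\eta$ and $W_R=\sum_{u\in R}v_u$.

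\textbf{Step 1: an upper bound for $\rho$ in terms of $W_R$.} Take the vertex $x$ with $v_x=1$. The Perron equation gives
\[
\rho=\alpha d(x)+\beta\sum_{y\sim x}v_y\le \alpha(n-1)+\beta(h-1)+\beta\sum_{y\in N(x)\cap R}v_y .
\]
It will be cleaner to sum over all of $L$. From $\rho\sum_{\ell\in L}v_\ell\le \sum_{\ell}(\alpha(n-1)v_\ell+\beta(h-1)+\beta W_R)$ and $v_\ell\le1$, we get
\[
(\rho-a_h)\cdot(\text{average of }v_\ell)\lesssim \beta W_R .
\]
The working form is therefore $(\rho-a_h)\le \beta W_R/(1-\eta)$ or similar. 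More precisely, we aim for $\rho-a_h\le \beta W_R$ by using $v_x=1$ directly: $\rho\le a_h+\beta W_R$, as in \eqref{eq:first-defect-rho-ah}.

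\textbf{Step 2: an upper bound for $W_R$.} Sum the Perron equation over $u\in R$:
\[
\rho W_R=\alpha\sum_{u\in R}d(u)v_u+\beta\sum_{u\in R}\bigl(\textstyle\sum_{\ell\in A(u)}v_\ell+\sum_{y\in N(u)\cap R}v_y\bigr).
\]
For the $L$-part use $v_\ell\le1$ and $\sum_u|A(u)|=e(L,R)\le h(n-h)$. The $\alpha$-degree part splits as $\alpha\sum_u|A(u)|v_u+\alpha\sum_u d_R(u)v_u$. The first piece is at most $\alpha hW_R$. This is where the factor $\alpha h$ in the statement comes from.

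The crucial part is the internal term $I=\sum_{u\in R}\bigl(\alpha d_R(u)v_u+\beta\sum_{y\in N(u)\cap R}v_y\bigr)$, which must be shown to be at most $(2r+1)W_R$ plus a saving. Split $R=F\cup Z$. For edges touching $F$, use Claim~\ref{lem:F-path-obstruction} with Lemma~\ref{lem:rooted-weighted-general} (parameter $s=r+1$, root set $F$). Taking $f=v$ and $g\equiv$ weights, and symmetrically, this bounds both $\sum_{F}\sum_{N_R}v_y$ and $\sum_{F}v_fd_R(f)$ by $r\cdot(\cdot)+(r+1)W_R$. Together these give a contribution of at most about $(2r+1)W_R$ after using $\alpha+\beta=1$. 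The $Z$–$Z$ edges are controlled by Claim~\ref{lem:low-attachment-control}: $J_{ZZ}\le\xi k|Z|$, and the $\alpha$-part is handled through $Q_Z$.

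The saving comes from the vertices of $R$ that miss some of $L$, together with the fact that $v_\ell\ge1-\eta$ is not exactly $1$. The goal is the bound
\[
(\rho-\alpha h-2r-1)W_R\le \beta\bigl(\beta h+(1-\eta)r\bigr)(n-h)/\beta\cdot(\ldots).
\]
Concretely, we want $\beta\sum_u\sum_{A(u)}v_\ell+\text{leftovers}\le \bigl(\beta h+(1-\eta)r\bigr)(n-h)$. The slack term $(1-\eta)r(n-h)$ is available to absorb the $Z$ error terms, $\xi k|Z|$, and the $\alpha(r+1)Q_Z$ term. This absorption uses $|Z|\le3\xi n$ and $Q_Z\le K_\eps(r+1)$, and works because $r\ge1$, so that $r(n-h)\gg \xi k\cdot\xi n$ fails. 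I would instead charge the $J_{ZZ}$ term against $\beta h(n-h)$ minus the actual $e(L,R)$, using the deficit $\sum_u(h-|A(u)|)\ge(h-1)|Z|$ for $u\in Z$. That deficit is of order $k|Z|\ge \xi k|Z|/\xi$, so it comfortably beats $\xi k|Z|$.

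\textbf{Step 3: multiply.} Multiplying the Step~1 bound $\rho-a_h\le\beta W_R$ by the Step~2 bound $(\rho-\alpha h-2r-1)W_R\le (\beta h+(1-\eta)r)(n-h)$ gives the claim. We need $\rho-\alpha h-2r-1\ge0$; this holds since $\rho\ge a_k\ge \alpha(n-1)$ and $n\ge C_\eps k$. The main obstacle is Step~2's bookkeeping: showing that the $R$-internal incidences cost at most $(2r+1)W_R$, and that all $Z$ errors fit in the deficit. I would try that deficit-charging first.
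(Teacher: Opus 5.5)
Your outline has the same architecture as the paper's proof. You get $\rho-a_h\le\beta W_R$ from the Perron equation at $x$ and sum the Perron equations over $R$. You bound the $R$--$L$ part by $\alpha hW_R+\beta h|F|+\beta|Z|=\alpha hW_R+\beta h(n-h)-\beta(h-1)|Z|$. For internal edges you use the rooted lemma with root set $F$ and parameter $r+1$ (via Claim~\ref{lem:F-path-obstruction}), together with $J_{ZZ}\le\xi k|Z|$. You charge all $|Z|$-errors to the deficit $-\beta(h-1)|Z|$, which is your eventual choice and exactly the paper's, and then multiply.

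The gap is Step~2, which is the whole content of the claim. You leave it at ``$r\cdot(\cdot)$'' and ``about $(2r+1)W_R$'', and the pieces you do specify give the wrong constant. This matters because the right-hand side must come out with exactly $(1-\eta)r(n-h)$: the only positive term in $P(\rho_0)$ in Claim~\ref{lem:r-zero} is $\beta\eta r(n-k+r)$. The $\eta$-saving also does not come from $v_\ell\ge1-\eta$ on $L$; for the $L$-part you only use $v_\ell\le1$. It comes from $\|v_R\|_\infty\le1-\eta$, since every vertex outside $L$ has coordinate below $1-\eta$. Finally, $(1-\eta)r(n-h)$ is not slack; it is exactly consumed by the rooted-lemma constants.

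\textbf{What is needed.} In the sum over $R$, each edge $uy$ of $H[R]$ contributes $\alpha(v_u+v_y)+\beta(v_y+v_u)=v_u+v_y$. So the internal part is the single sum $\sum_{u\in R}\sum_{y\in N_R(u)}v_y=I_F+\sum_{f\in F}v_fd_Z(f)+J_{ZZ}$, with $I_F=\sum_{f\in F}\sum_{y\in N_R(f)}v_y$. The rooted lemma must then be applied with two different weightings:
\begin{itemize}
\item $f\equiv1$ on $F$ and $g=v_R$ give $I_F\le r(1-\eta)|F|+(r+1)W_R$;
\item $f=v_F$ and $g=\mathbf 1_Z$ give $\sum_{f\in F}v_fd_Z(f)\le rW_F+(r+1)(1-\eta)|Z|$.
\end{itemize}
The total is $(2r+1)W_R+(1-\eta)r(n-h)+(1-\eta)|Z|$ plus $J_{ZZ}$.

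\textbf{Where your version fails.} The sum you name, $\sum_{f\in F}v_fd_R(f)$, is not bounded by ``$r\cdot(\cdot)+(r+1)W_R$''. The lemma with $f=v_F$, $g=\mathbf 1_R$ gives $rW_F+(r+1)(1-\eta)|R|$. Treating the $F$-rows as $\alpha\sum_{f\in F}v_fd_R(f)+\beta I_F$ this way overshoots the correct constant by about $\alpha(r+1)|F|$, which is of order $\alpha rn$. That exceeds the margin $\beta\eta r(n-k+r)$ once $\alpha\gtrsim\eta$. It is also proportional to $|F|$ rather than $|Z|$, so the deficit cannot absorb it. No $Q_Z$ is needed for the $\alpha$-part of the $Z$--$Z$ term, since $\sum_{z\in Z}d_Z(z)v_z=J_{ZZ}$ exactly.

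\textbf{Step~3.} The bound $\rho\ge\alpha(n-1)$ does not give $\rho-\alpha h-2r-1>0$ when $\alpha$ is small. Use the full bound $\rho\ge a_k$, which gives $\rho-\alpha h-2r-1\ge\beta(k-1)-2r-1>0$ because $r\le\gamma k$ with $\gamma\ll_\eps1$. The paper instead uses $\rho_0-b_k\ge c_\eps\sqrt{kn}$.
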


\begin{proof}[Proof of Claim~\ref{lem:aalpha-completion-ineq}]
Let $W_R=\sum_{u\in R}v_u$ and $m=|R|=n-h$. Summing the Perron equations over all vertices of $R$, we get
\[
\rho W_R=\alpha\sum_{u\in R}d(u)v_u+\beta\sum_{u\in R}\sum_{y\sim u}v_y.
\]
We split the right-hand side into the contribution of $R$--$L$ edges and the contribution of internal $R$-edges. Since every vertex of $F$ has at most $h$ neighbors in $L$, while every vertex of $Z$ has at most one neighbor in $L$, the $R$--$L$ contribution is at most
\begin{equation}
\label{eq:completion-RL}
\alpha\sum_{u\in R}d_L(u)v_u+\beta\sum_{u\in R}\sum_{\ell\in A(u)}v_\ell\le \alpha hW_R+\beta h|F|+\beta |Z|=\alpha hW_R+\beta hm-\beta(h-1)|Z|.
\end{equation}
For an internal edge $uv\in E(H[R])$, its total contribution to the sum of the Perron equations over $R$ is $\alpha(v_u+v_v)+\beta(v_u+v_v)=v_u+v_v$. Thus the internal $R$-contribution equals $\sum_{u\in R}\sum_{\substack{y\in R\\uy\in E(H)}}v_y$. Write this double sum as $I_F+I_Z$, where $I_F=\sum_{f\in F}\sum_{\substack{y\in R\\fy\in E(H)}}v_y$ and $I_Z=\sum_{z\in Z}\sum_{\substack{y\in R\\zy\in E(H)}}v_y$.

By Claim~\ref{lem:F-path-obstruction}, $H[R]$ contains no path of order $2r+3$ with both endpoints in $F$. Applying Lemma~\ref{lem:rooted-weighted-general} in $H[R]$ with root set $F$, parameter $r+1$, $f\equiv1$, and $g=v_R$, we get
\begin{equation}
\label{eq:completion-IF}
I_F\le r\|v_R\|_\infty |F|+(r+1)W_R\le r(1-\eta)|F|+(r+1)W_R.
\end{equation}
For $I_Z$, split according to whether the neighbor lies in $F$ or $Z$. Applying Lemma~\ref{lem:rooted-weighted-general} with root set $F$, parameter $r+1$, $f=v_F$, and $g=\mathbf 1_Z$, we obtain
\begin{equation}
\label{eq:completion-FZ}
\sum_{f\in F}v_f d_Z(f)\le rW_F+(r+1)\|v_F\|_\infty |Z|\le rW_F+(r+1)(1-\eta)|Z|.
\end{equation}
The $Z$--$Z$ part is $J_{ZZ}$, and Claim~\ref{lem:low-attachment-control} gives $J_{ZZ}\le\xi k|Z|$. Combining \eqref{eq:completion-IF} and \eqref{eq:completion-FZ}, and using $W_F\le W_R$, gives
\begin{equation}
\label{eq:completion-internal}
I_F+I_Z\le (2r+1)W_R+r(1-\eta)m+(1-\eta+\xi k)|Z|.
\end{equation}
Putting \eqref{eq:completion-RL} and \eqref{eq:completion-internal} together yields
\[
\rho W_R\le (\alpha h+2r+1)W_R+\bigl(\beta h+(1-\eta)r\bigr)m+\bigl[-\beta(h-1)+1-\eta+\xi k\bigr]|Z|.
\]
Since $h=k-r$, $r\le\gamma k$, $\gamma\ll_\eps1$, $\xi\ll_\eps1$, and $\beta\ge\eps$, the coefficient of $|Z|$ is negative for $k\ge k_\eps$. Therefore
\begin{equation}
\label{eq:completion-WR-ineq}
\bigl(\rho-\alpha h-2r-1\bigr)W_R\le \bigl(\beta h+(1-\eta)r\bigr)(n-h).
\end{equation}
The factor multiplying $W_R$ is positive. Indeed, by Lemma~\ref{lem:comparison-gaps},
\[
\rho-\alpha h-2r-1\ge \rho_0-\alpha h-2r-1=(\rho_0-b_k)+\alpha r-2r-1\ge c_\eps\sqrt{kn}-3r-1>0,
\]
after choosing $C_\eps$ sufficiently large and $\gamma$ sufficiently small. Finally, the Perron equation at $x\in L$ with $v_x=1$ gives $\rho=\alpha d(x)+\beta\sum_{y\sim x}v_y\le \alpha(n-1)+\beta(h-1+W_R)=a_h+\beta W_R$. Thus $W_R\ge(\rho-a_h)/\beta$. Substituting this into \eqref{eq:completion-WR-ineq} gives
\[
(\rho-a_h)\bigl(\rho-\alpha h-2r-1\bigr)\le \beta\bigl(\beta h+(1-\eta)r\bigr)(n-h),
\]
as required.
\end{proof}

\begin{claim}
\label{lem:r-zero}
 $r=0$.
\end{claim}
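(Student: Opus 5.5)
We argue by contradiction: assume $1\le r\le\gamma k$, so that Claim~\ref{lem:aalpha-completion-ineq} is available. We then compare that inequality with the comparison equation of Lemma~\ref{lem:comparison-equation}, which says that $\rho_0$ satisfies $(\rho_0-a_k)(\rho_0-b_k)=\beta^2k(n-k)$, and we use $\rho\ge\rho_0$. The idea is that the right-hand side of Claim~\ref{lem:aalpha-completion-ineq} loses roughly $\beta\eta r n$ compared with the ``full'' value $\beta^2 k n$. The left-hand side, in contrast, is at least the comparison product up to errors of order $r(k+\sqrt{kn})$, which the hierarchy absorbs.

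\textbf{Left-hand side.} Write $X=\rho-a_k\ge0$ (Lemma~\ref{lem:comparison-gaps}) and $Y=\rho-b_k$. Since $a_h=a_k-\beta r$ and $\alpha h=b_k-\alpha r$, we get
\[
(\rho-a_h)(\rho-\alpha h-2r-1)=(X+\beta r)(Y+\alpha r-2r-1)\ge XY+\beta rY-(2r+1)(X+\beta r).
\]
The function $t\mapsto(t-a_k)(t-b_k)$ is increasing on $[\rho_0,\infty)$, so $XY\ge\beta^2k(n-k)$. Moreover $\beta rY\ge0$. For the error term, $X\le\rho_0-a_k+(\rho-\rho_0)$, and we need to bound $\rho-a_k$ above. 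The first bound in Lemma~\ref{lem:comparison-gaps} controls $\rho_0-a_k$. For $\rho$ itself we use the Perron equation at $x$, as in \eqref{eq:first-defect-rho-ah}, namely $\rho\le a_h+\beta W_R$. Together with $W_R\le K_\eps\rho$ this is weak, so instead we read an upper bound on $X$ off Claim~\ref{lem:aalpha-completion-ineq} itself. Its right-hand side is at most $\beta^2 kn+\beta rn$, and its second factor is at least $c_\eps\sqrt{kn}$, so $X+\beta r\le K_\eps\sqrt{kn}$. Consequently the error satisfies $(2r+1)(X+\beta r)\le K_\eps r\sqrt{kn}+K_\eps r^2+K_\eps r$.

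\textbf{Right-hand side.} Expanding,
\[
\beta(\beta h+(1-\eta)r)(n-h)=\beta\bigl(\beta k+(1-\beta-\eta)r\bigr)(n-h)=\beta^2k(n-h)+\beta\alpha r(n-h)-\beta\eta r(n-h).
\]
The $\beta\alpha r n$ term is the dangerous one. It must be cancelled by the $\beta rY$ term on the left, using $Y\ge\rho-b_k\ge\alpha(n-1-k)+\beta(k-1)+X$, which gives $\beta rY\ge\alpha\beta r(n-k)$. After this cancellation, and after noting that $\beta^2k(n-h)-\beta^2k(n-k)=\beta^2kr$ is small, we arrive at
\[
\beta\eta r(n-h)\le K_\eps r\sqrt{kn}+K_\eps r^2+K_\eps r+K_\eps kr.
\]
Since $\beta\ge\eps$, the left side is at least $\eps\eta rn/2$. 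By the final item of the constant hierarchy (together with $K_\eps kr\le\eta rn/8$ for $n\ge C_\eps k$), the right side is smaller. This contradiction shows that $r=0$.

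\textbf{Main obstacle.} The delicate step is the bookkeeping: the $\alpha\beta r n$ terms on the two sides must cancel exactly, and the upper bound on $X$ must be obtained without circularity. If reading $X$ off the claim turns out to be awkward, the fallback is to keep $X$ symbolic. The terms involving $X$ then combine into $X\bigl(\beta r-(2r+1)\bigr)$ plus a product that already appears in $XY$. In that case it suffices to show that $X(2r+1)$ is dominated by $\eta\beta rn/2$, and this follows from $X\le K_\eps\sqrt{kn}$, which in turn follows from the claim's inequality together with $Y\ge c_\eps\sqrt{kn}$.
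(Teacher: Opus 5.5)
Your proof is correct, and its core is the same as the paper's. You set the completion inequality of Claim~\ref{lem:aalpha-completion-ineq} against the comparison equation $(\rho_0-a_k)(\rho_0-b_k)=\beta^2k(n-k)$. The $\alpha\beta rn$ terms cancel via $Y=\alpha(n-k-1)+\beta(k-1)+X$, and the surviving gain $\beta\eta r(n-h)$ beats errors of order $r\sqrt{kn}+kr+r^2$.

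The difference is the point at which the comparison is made. The paper forms
\[
P(t)=(t-a_h)(t-\alpha h-2r-1)-\beta(\beta h+(1-\eta)r)(n-h).
\]
It computes $P(\rho_0)=\beta\eta r(n-k+r)-(r+1)X_0-(\text{terms of order } r^2)$ exactly, and then checks $P'(t)>0$ on $[\rho_0,\infty)$. That way it only needs $X_0=\rho_0-a_k\le K_\eps\sqrt{kn}$ from Lemma~\ref{lem:comparison-gaps}, and no upper bound on $\rho$ ever enters.

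You instead work at $\rho$ directly and get $XY\ge\beta^2k(n-k)$ from the monotonicity of $(t-a_k)(t-b_k)$. So you need an a priori bound $X=\rho-a_k\le K_\eps\sqrt{kn}$. Your derivation of it from the completion inequality (right side at most $2kn$, second factor at least $c_\eps\sqrt{kn}/2$) is valid and not circular. It is even more immediate from Lemma~\ref{lem:spectral-upper-alpha}: $\rho\le\alpha(n-1)+\beta\sqrt{2k(n-1)}$ gives $X\le\sqrt{2kn}$. So you trade the derivative check on $P$ for an upper bound on $\rho$, and neither route is longer.

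Two cosmetic points. First, $\beta rY\ge\alpha\beta r(n-k)$ uses $\beta(k-1)\ge\alpha$, which holds for $k\ge k_\eps$; alternatively keep $n-k-1$ and absorb $\alpha\beta r$ into $K_\eps r$. Second, your left side carries the factor $\beta\ge\eps$, so the last item of the constant hierarchy must be invoked with $K_\eps/\eps$ in place of $K_\eps$. This is harmless, and the paper's own final step relies on the same adjustment implicitly.
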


\begin{proof}[Proof of Claim~\ref{lem:r-zero}]
Assume $1\le r\le\gamma k$. Put $B_h=\alpha h+2r+1$, and define
\[
P(t)=(t-a_h)(t-B_h)-\beta\bigl(\beta h+(1-\eta)r\bigr)(n-h).
\]
By Claim~\ref{lem:aalpha-completion-ineq}, $P(\rho)\le0$. We prove that $P(\rho_0)>0$ and that $P$ is increasing on $[\rho_0,\infty)$.

Let $X_0=\rho_0-a_k$ and $Y_0=\rho_0-b_k$. Then $X_0Y_0=\beta^2k(n-k)$ by Lemma~\ref{lem:comparison-equation}. Since $a_h=a_k-\beta r$, we have $\rho_0-a_h=X_0+\beta r$. Also $B_h=\alpha h+2r+1=b_k+(2-\alpha)r+1$, and hence $\rho_0-B_h=Y_0-(2-\alpha)r-1$. Using $h=k-r$ and $n-h=n-k+r$, we get
\[
P(\rho_0)=(X_0+\beta r)\bigl(Y_0-(2-\alpha)r-1\bigr)-\beta\bigl(\beta k+(\alpha-\eta)r\bigr)(n-k+r).
\]
Expanding and using $X_0Y_0=\beta^2k(n-k)$, we obtain
\[
P(\rho_0)=\beta rY_0-\bigl((2-\alpha)r+1\bigr)X_0-\beta r\bigl((2-\alpha)r+1\bigr)-\beta^2kr-\alpha\beta r(n-k+r)+\beta\eta r(n-k+r).
\]
Now $Y_0=\rho_0-b_k=\alpha(n-k-1)+\beta(k-1)+X_0$. Thus,
\[
P(\rho_0)=\beta\eta r(n-k+r)-(r+1)X_0-\beta r\bigl((2-\alpha)r+1\bigr)-\alpha\beta r(r+1)-\beta^2r.
\]
By Lemma~\ref{lem:comparison-gaps}, $X_0\le K_\eps\sqrt{kn}$. Therefore
\[
P(\rho_0)\ge \eps\eta r(n-k)-K_\eps r\sqrt{kn}-K_\eps r^2-K_\eps r.
\]
Since $r\le\gamma k$ and $n\ge C_\eps k$, the right-hand side is positive after choosing $C_\eps$ sufficiently large. Hence $P(\rho_0)>0$.

Finally, $P'(t)=2t-a_h-B_h$. For $t\ge\rho_0$, we have
\(
P'(t)\ge 2\rho_0-(a_k-\beta r)-\bigl(b_k+(2-\alpha)r+1\bigr)=(\rho_0-a_k)+(\rho_0-b_k)-r-1\ge c_\eps\sqrt{kn}-r-1>0,
\)
after choosing $C_\eps$ sufficiently large and $\gamma$ sufficiently small. Thus $P$ is increasing on $[\rho_0,\infty)$. Since $\rho>\rho_0$, we get $P(\rho)>P(\rho_0)>0$, contradicting $P(\rho)\le0$. Therefore $r=0$.
\end{proof}

Combining Claims~\ref{lem:first-defect} and~\ref{lem:r-zero}, we have
$
        |L|=k.
$
Let
$
U_0=V(H)\setminus L.
$
For $u\in U_0$, write $A(u)=N(u)\cap L$, and define
$
F=\{u\in U_0:|A(u)|\ge2\}$, and $Z=\{u\in U_0:|A(u)|\le1\}.
$

\begin{claim}
\label{lem:no-two-F-alpha}
Every vertex of $U_0$ has at most one neighbor in $F$.
\end{claim}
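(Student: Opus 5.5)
Suppose, for a contradiction, that some $w\in U_0$ has two distinct neighbors $f_1,f_2\in F$. The plan is to reuse the cycle-closing construction from the proof of Claim~\ref{lem:F-path-obstruction}, now with $r=0$ and $h=|L|=k$. The triple $f_1wf_2$ is a path of order $3=2r+3$ whose end vertices both lie in $F$. We will route around all of $L$ to close it into a cycle of order $2k+2$.

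First, choose $a\in A(f_1)$ and $b\in A(f_2)$ with $a\neq b$. This is possible because $|A(f_1)|\ge2$ and $|A(f_2)|\ge2$: fix any $a\in A(f_1)$, and then $A(f_2)\setminus\{a\}$ is nonempty. Enumerate $L$ as $b=\ell_1,\ell_2,\ldots,\ell_k=a$; this needs $k\ge2$, which holds since $k\ge k_\eps$. By Claim~\ref{lem:heavy-degree-alpha}, each consecutive pair $\ell_i,\ell_{i+1}$ has at least $(1-2\xi)n$ common neighbors. Since $n\ge C_\eps k$ and $\xi\ll1$, this is far more than $2k+2$. We then greedily pick pairwise distinct $q_i\in N(\ell_i)\cap N(\ell_{i+1})$ for $i=1,\ldots,k-1$, each avoiding $\{f_1,w,f_2\}\cup L$ and the previously chosen $q_j$. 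At each step at most $3+k+(k-2)<2k+2$ vertices are forbidden, so a choice always exists.

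The closed walk
\[
f_1\,w\,f_2\,b\,q_1\,\ell_2\,q_2\cdots q_{k-1}\,a\,f_1
\]
is then a cycle, since all its vertices are distinct: $f_1,w,f_2\in U_0$ are distinct, the $q_i$ avoid them and $L$, and $a\ne b$. Its edges are $f_1w$ and $wf_2$ (by assumption), $f_2b$ and $af_1$ (by the choice of $a,b$), and $\ell_iq_i$, $q_i\ell_{i+1}$ (by the choice of the $q_i$). Its order is $3+k+(k-1)=2k+2$. This contradicts the fact that $H$ is $C_{2k+2}$-free, so $w$ has at most one neighbor in $F$.

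There is no real obstacle here. The only points needing care are the choice of $a\neq b$, which is exactly why $F$ is defined by $|A(u)|\ge2$, and the count of forbidden vertices in the greedy step, which is dominated by the common-neighborhood bound from Claim~\ref{lem:heavy-degree-alpha}.
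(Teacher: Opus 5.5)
Your proof is correct and is essentially the paper's argument: you close the path $f_1wf_2$ into a $C_{2k+2}$ by routing through all $k$ vertices of $L$ via greedily chosen common neighbors. The only difference is presentational: you frame it as the $r=0$ instance of the construction in Claim~\ref{lem:F-path-obstruction}, while the paper writes it out directly.
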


\begin{proof}[Proof of Claim~\ref{lem:no-two-F-alpha}]
Suppose that some $v\in U_0$ has two distinct neighbors $p,q\in F$. 
Choose $b\in A(q)$ and $a\in A(p)\setminus\{b\}$. 
We list the vertices of $L$ as $b=h_1,h_2,\ldots,h_k=a$. For each $i=1, \ldots,k-1$, the pair $h_i,h_{i+1}$ has at least $(1-2\xi)n$ common neighbors. 
Then we greedily choose distinct vertices $r_i\in N(h_i)\cap N(h_{i+1})$ avoiding $p,q,v$ and $L$. 
However the cycle
$
p\,v\,q\,h_1\,r_1\,h_2\,r_2\cdots r_{k-1}\,h_k\,p,
$
where $q h_1$ and $h_k p$ are edges by the choices of $b=h_1\in A(q)$ and $a=h_k\in A(p)$, is a copy of $C_{2k+2}$, a contradiction.
\end{proof}

\begin{claim}
\label{lem:Z-empty-alpha}
$Z=\emptyset$.
\end{claim}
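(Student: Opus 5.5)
We argue by contradiction, following the switching argument of Claim~\ref{lem:low-attachment-control} but now with $h=|L|=k$, and additionally using Claim~\ref{lem:no-two-F-alpha} to control the neighbours of $Z$ outside $Z$. Suppose $Z\neq\emptyset$. The strategy is to compare two estimates for the quadratic mass $Q_Z=\sum_{z\in Z}v_z^2$: a lower bound that is linear in $k$, coming from switching, and an upper bound that is independent of $k$, coming from the Perron equations restricted to $Z$.

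\emph{Step 1: every vertex of $Z$ has large Perron weight.} By Claim~\ref{lem:heavy-degree-alpha}, any two vertices of $L$ have more than $2k+2$ common neighbours, since $n\ge C_\eps k$. So by Lemma~\ref{lem:no-new-cycle-switch}, replacing $N_H(z)$ by $L$ creates no $C_{2k+2}$, for any $z\in Z$. Lemmas~\ref{lem:switch-to-L} and~\ref{lem:switching-lower-rhov} then give $\rho v_z\ge c_\eps k$ for every $z\in Z$, using $|L|=k$ and $v_\ell\ge 1-\eta\ge1-\eta_0$ on $L$. Consequently
\[
v_z^2\ge \frac{c_\eps k}{\rho}\,v_z \quad (z\in Z), \qquad\text{so}\qquad Q_Z\ge \frac{c_\eps k}{\rho}\,W_Z,
\]
where $W_Z=\sum_{z\in Z}v_z>0$.

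\emph{Step 2: $Z$ is small, so $H[Z]$ has small spectral radius.} The counting from Claim~\ref{lem:low-attachment-control} carries over verbatim with $h=k$: we have $\sum_{u\in U_0}(k-|A(u)|)\le k\xi n$, and each $z\in Z$ misses at least $k-1$ vertices of $L$, so $|Z|\le 3\xi n$. Lemma~\ref{lem:spectral-upper-alpha} applied to the $C_{2k+2}$-free graph $H[Z]$ gives
\[
\rho_\alpha(H[Z])\le 3\alpha\xi n+\beta\sqrt{6\xi kn}\le(1-3\xi)\rho,
\]
exactly as before; this uses $\rho\ge a_k\ge\alpha(n-1)$ and $\rho\ge c_\eps\sqrt{kn}$, with $\xi\ll_\eps1$.

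\emph{Step 3: upper bound on $Q_Z$ from the Perron equations on $Z$.} Fix $z\in Z$. It has at most one neighbour in $L$ by definition, and at most one neighbour in $F$ by Claim~\ref{lem:no-two-F-alpha}; all its other neighbours lie in $Z$. The Perron equation at $z$ therefore gives
\[
\rho v_z\le \bigl(A_\alpha(H[Z])v_Z\bigr)_z+2\alpha v_z+2\beta .
\]
Multiplying by $v_z$, summing over $Z$, and using Step~2 yields
\[
\rho Q_Z\le (1-3\xi)\rho Q_Z+2\alpha Q_Z+2\beta W_Z,
\qquad\text{hence}\qquad
(3\xi\rho-2)Q_Z\le 2W_Z .
\]
Since $\rho\ge c_\eps\sqrt{kn}$ is large, this gives $Q_Z\le K_\eps W_Z/(\xi\rho)$.

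\emph{Step 4: conclude.} Comparing with Step~1 gives $c_\eps k/\rho\le K_\eps/(\xi\rho)$, that is, $k\le K_\eps/(c_\eps\xi)$. This contradicts $k\ge k_\eps$ once $k_\eps$ is chosen large. Hence $Z=\emptyset$.

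The main obstacle is Step~3, namely making sure that every non-$Z$ contribution to the Perron equation at $z$ is $O(1)$ rather than of order $k$. That is exactly where Claim~\ref{lem:no-two-F-alpha} is used: without it, $z$ could have many neighbours in $F$ with weight close to $1-\eta$, and the bound would fail. A secondary point to check is that the inequality $\rho_\alpha(H[Z])\le(1-3\xi)\rho$ holds uniformly in both regimes $\alpha\ge\tau$ and $\alpha<\tau$. This is the same verification already carried out in Claim~\ref{lem:low-attachment-control}.
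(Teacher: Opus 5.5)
Your proof is correct and follows essentially the same route as the paper: switching gives $\rho v_z\ge c_\eps k$, Claim~\ref{lem:no-two-F-alpha} caps the contribution from $L\cup F$ to the Perron equation at $z$ by $2$, and this is compared with the bound $\rho_\alpha(H[Z])\le(1-3\xi)\rho$, which comes from $|Z|=O(\xi n)$ and Lemma~\ref{lem:spectral-upper-alpha}. The only difference is presentational: the paper absorbs the constant $2$ into $\xi\rho v_z$ entrywise and applies Lemma~\ref{lem:matrix} to get $\rho_\alpha(H[Z])\ge(1-\xi)\rho$, whereas you average the same inequality against $v_Z$ and use the Rayleigh quotient, then compare upper and lower bounds on $Q_Z$.
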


\begin{proof}[Proof of Claim~\ref{lem:Z-empty-alpha}]
By Claim~\ref{lem:heavy-degree-alpha} and $|L|=k$, we have
\[
\sum_{u\in U_0}(k-|A(u)|)=\sum_{\ell\in L}|U_0\setminus N(\ell)|\le k\xi n.
\]
Since every $u\in Z$ has $k-|A(u)|\ge k-1$, it follows that $|Z|\le2\xi n$.

Assume that $Z\ne\emptyset$ and let $z\in Z$. 
By Claim~\ref{lem:heavy-degree-alpha}, for sufficiently large $C_\eps$, any two vertices of $L$ have more than $2k+2$ common neighbors in $H$.
Hence Lemma~\ref{lem:no-new-cycle-switch} shows that replacing $N_H(z)$ with $L$ creates no $C_{2k+2}$. By Lemmas~\ref{lem:switch-to-L} and~\ref{lem:switching-lower-rhov}, we have $\rho v_z\ge c_\eps k$.

The vertex $z$ has at most one neighbor in $L$ and, by Claim~\ref{lem:no-two-F-alpha}, at most one neighbor in $F$. Therefore the contribution to the Perron equation at $z$ from $L\cup F$ is at most
\[
\alpha d_{L\cup F}(z)v_z+\beta\sum_{y\in N(z)\cap(L\cup F)}v_y\le 2\alpha+2\beta=2.
\]
For $k\ge k_\eps$, this is at most $\xi\rho v_z$. Thus $A_\alpha(H[Z])v_Z\ge(1-\xi)\rho v_Z$ entrywise. 
By Lemma~\ref{lem:matrix}, we have $\rho_\alpha(H[Z])\ge(1-\xi)\rho$.

On the other hand, by Lemma~\ref{lem:spectral-upper-alpha} and $|Z|\le2\xi n$, we have
\[
\rho_\alpha(H[Z])\le \alpha|Z|+\beta\sqrt{2k|Z|}\le 2\xi\alpha n+2\beta\sqrt{\xi kn}.
\]
Since $\rho\ge a_k\ge\alpha(n-1)$ and $\rho\ge \beta\sqrt{k(n-k)}$, choosing $\xi\ll_\eps1$, $C_\eps$ and $k_\eps$ sufficiently large gives $\rho_\alpha(H[Z])<(1-2\xi)\rho$. This contradicts $\rho_\alpha(H[Z])\ge(1-\xi)\rho$. Therefore $Z=\emptyset$.
\end{proof}

\begin{claim}
\label{lem:complete-hubs-alpha}
Every vertex of $U_0$ is adjacent to every vertex of $L$, and $\Delta(H[U_0])\le1$.
\end{claim}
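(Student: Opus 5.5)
We now have $|L|=k$, $Z=\emptyset$ (Claim~\ref{lem:Z-empty-alpha}), so $U_0=F$: every vertex of $U_0$ has at least two neighbors in $L$. The plan is to establish first the degree bound $\Delta(H[U_0])\le1$, and then the completeness of the join $L$--$U_0$ by a switching argument.

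\emph{Step 1: $\Delta(H[U_0])\le1$.} Since $U_0=F$, Claim~\ref{lem:no-two-F-alpha} says that every vertex of $U_0$ has at most one neighbor in $F=U_0$. This is exactly $\Delta(H[U_0])\le1$, and no further argument is needed.

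\emph{Step 2: each $u\in U_0$ is adjacent to all of $L$.} Fix $u\in U_0$ and let $H'$ be obtained from $H$ by adding all missing edges from $u$ to $L$, keeping the (at most one) edge of $u$ inside $U_0$. If $H'$ is $C_{2k+2}$-free and $H'\neq H$, then $H$ is a proper spanning subgraph of the connected graph $H'$. Lemma~\ref{lem:strict-monotonicity} then gives $\rho_\alpha(H')>\rho_\alpha(H)$, contradicting the maximality of $H$.

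It remains to show $H'$ is $C_{2k+2}$-free; this is where the main work lies. Suppose $C$ is a copy of $C_{2k+2}$ in $H'$. It must use a new edge $u\ell$ with $\ell\in L$. The cycle has $2k+2$ vertices and $|L|=k$, so $C$ contains at least $k+2$ vertices of $U_0$. Since $H[U_0]$ is a matching, the maximal runs of $C$ inside $U_0$ have length at most $2$. These runs are separated by vertices of $L$, so the number of $U_0$-vertices on $C$ is at most $|L\cap C|$ plus the number of runs of length $2$. This forces at least two of the runs to be matching edges of $H[U_0]$.

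The simplest route is to rebuild the cycle as a cycle in $H$ rather than analyse $C$ directly. Here is how to exhibit a $C_{2k+2}$ in $H$ itself. Let $pq$ be a matching edge of $H[U_0]$. Then $p,q\in F$, so we may choose $a\in A(p)$ and $b\in A(q)$ with $a\ne b$. List $L$ as $b=h_1,\dots,h_k=a$ and join consecutive hubs through fresh common neighbors, which is possible by Claim~\ref{lem:heavy-degree-alpha}. This gives the cycle $p\,q\,h_1 r_1 h_2\cdots r_{k-1} h_k\,p$ of length $2+k+(k-1)=2k+1$, which is odd and so not yet what we need. To gain one vertex, replace one connector $r_i$ by a path $r_i\,w\,r_i'$, where $w r_i$ and $w r_i'$ are edges, or else use a second matching edge. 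Since at least two matching edges appear on $C$, we get two disjoint edges $p_1q_1$ and $p_2q_2$ of $H[U_0]$. The cycle $p_1q_1h_1r_1\cdots h_j\,p_2q_2\,h_{j+1}\cdots h_k p_1$ uses all $k$ hubs, $4$ matching vertices and $k-2$ connectors, for a total of $2k+2$, with the connectors again chosen greedily. The compatibility of choosing distinct hubs adjacent to the endpoints uses only $|A(\cdot)|\ge2$. This is a $C_{2k+2}$ in $H$, a contradiction.

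The main obstacle is the counting argument showing that $C$ must contain two matching edges, together with the choice of distinct hubs at the four endpoints. If $H[U_0]$ has at most one edge, then instead every cycle in $H'$ has at most $k+1$ vertices of $U_0$, by the same alternation count as for $S^+_{n,k}$, and $H'$ is automatically $C_{2k+2}$-free.
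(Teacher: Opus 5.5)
\textbf{There is a genuine gap in Step~2.} Step~1 is correct and is exactly the paper's argument. Your closing remark is also fine: if $e(H[U_0])\le1$, adding edges from $u$ to $L$ creates no $C_{2k+2}$.

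The gap arises when the offending cycle $C\subseteq H'$ uses the $U_0$-edge $uw$ together with a new edge $u\ell$. Your count correctly gives two edges $p_1q_1,p_2q_2$ of $H[U_0]$ on $C$. But rebuilding a $C_{2k+2}$ inside $H$ fails. Choosing distinct hubs $h_1\in A(q_1)$, $h_j\in A(p_2)$, $h_{j+1}\in A(q_2)$, $h_k\in A(p_1)$ does \emph{not} follow from $|A(\cdot)|\ge2$, since three or all four of these sets may equal the same pair $\{a,b\}$.

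In fact the implication ``$C_{2k+2}\subseteq H'\Rightarrow C_{2k+2}\subseteq H$'' cannot be derived from the structure known at this point. Take $H[U_0]$ to consist of exactly two edges $p_1q_1,p_2q_2$, with $A(p_1)=A(q_1)=A(p_2)=A(q_2)=\{a,b\}$, and every other vertex of $U_0$ complete to $L$.
\begin{itemize}
\item This configuration satisfies Claims~\ref{lem:heavy-degree-alpha}, \ref{lem:no-two-F-alpha} and~\ref{lem:Z-empty-alpha}.
\item It is $C_{2k+2}$-free for $k\ge3$. A cycle meeting $L$ in $m$ vertices and using $t$ edges of $H[U_0]$ has at most $2m+t$ vertices. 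A cycle using both special edges has each of them flanked by $a$ and $b$, so it is the $6$-cycle $a\,p_1q_1\,b\,p_2q_2\,a$ up to orientation.
\item Yet for $u=p_1$, your $H'$ contains $p_1\,q_1\,a\,q_2\,p_2\,b\,r_1\,h_3\,r_2\,h_4\cdots r_{k-2}\,h_k\,p_1$. Here $L=\{a,b,h_3,\dots,h_k\}$, the $r_i$ are fresh common neighbours, and $h_kp_1$ is a new edge. This cycle has $6+2(k-2)=2k+2$ vertices.
\end{itemize}
So a purely additive switch at $u$ need not preserve $C_{2k+2}$-freeness, and the strict-monotonicity argument has nothing to apply to.

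\textbf{The fix is to delete the edge $uw$ as well}, i.e.\ replace $N_H(u)$ by $L$ as the paper does. Then every cycle through $u$ in the new graph enters and leaves $u$ through two hubs. Lemma~\ref{lem:no-new-cycle-switch} then gives $C_{2k+2}$-freeness, by rerouting through one of the more than $2k+2$ common neighbours supplied by Claim~\ref{lem:heavy-degree-alpha}.

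Because $H$ is no longer a subgraph of $H'$, Lemma~\ref{lem:strict-monotonicity} must be replaced by the Rayleigh comparison of Lemma~\ref{lem:aalpha-switch}. At least one edge $u\ell$ with $v_\ell\ge1-\eta$ is gained, and at most one edge $uw$ with $v_w<1-\eta$ is lost. Since $\beta>0$ and $v_u>0$, the map $t\mapsto\Phi_\alpha(v_u,t)$ is strictly increasing. Hence the quadratic form strictly increases at the Perron vector, so $\rho_\alpha(H')>\rho_\alpha(H)$, contradicting extremality.
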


\begin{proof}[Proof of Claim~\ref{lem:complete-hubs-alpha}]
Since $Z=\emptyset$, $U_0\subseteq F$.
By Claim~\ref{lem:no-two-F-alpha}, every vertex of $U_0$ has at most one neighbor in $U_0$, and hence $\Delta(H[U_0])\le1$.

Suppose that some $u\in U_0$ is not adjacent to a hub in $L$. 
By Claim~\ref{lem:heavy-degree-alpha}, for sufficiently large $C_\eps$, any two vertices of $L$ have more than $2k+2$ common neighbors in $H$.
Hence Lemma~\ref{lem:no-new-cycle-switch} shows that replacing $N_H(u)$ with $L$ creates no $C_{2k+2}$. Let $H'$ be the graph obtained by this replacement.

We compare the $\Phi_\alpha$-sums at $u$. The old neighborhood of $u$ consists of $A(u)\subsetneq L$ and at most one vertex of $U_0$ and the new neighborhood is $L$. Since $u\in U_0$, we have $v_u>0$. Since $\beta>0$, the function $t\mapsto\Phi_\alpha(v_u,t)$ is strictly increasing on $[0,1]$.
Every vertex of $L$ has Perron coordinate at least $1-\eta$, whereas every vertex of $U_0$ has Perron coordinate strictly smaller than $1-\eta$. Therefore
\[
\sum_{\ell\in L}\Phi_\alpha(v_u,v_\ell)>\sum_{y\in N_H(u)}\Phi_\alpha(v_u,v_y).
\]
By Lemma~\ref{lem:aalpha-switch}, we have $\rho_\alpha(H')>\rho_\alpha(H)$, contradicting the extremality of $H$. Hence every vertex of $U_0$ is adjacent to every vertex of $L$.
\end{proof}

By Claim~\ref{lem:complete-hubs-alpha}, $H$ contains the complete bipartite graph between $L$ and $U_0$, and $H[U_0]$ has maximum degree at most $1$. If $H[U_0]$ contained two edges, then they would be disjoint, say $ab$ and $cd$. Choose distinct vertices $r_1, \ldots,r_{k-2}\in U_0\setminus\{a,b,c,d\}$ and write $L=\{\ell_1,\ldots,\ell_k\}$. Since every vertex of $U_0$ is joined to every vertex of $L$,
$
ab\ell_1r_1\ell_2r_2\cdots \ell_{k-2}r_{k-2}\ell_{k-1}cd\ell_ka
$
is a copy of $C_{2k+2}$, a contradiction. Hence
$
e(H[U_0])\le1
$
and $H$ is a spanning subgraph of a graph isomorphic to $S^+_{n,k}$. 
Since $S^+_{n,k}$ is connected and $\alpha<1$, Lemma~\ref{lem:strict-monotonicity} shows that every proper spanning subgraph has smaller $A_\alpha$-spectral radius. But
$
\rho_\alpha(H)\ge \rho_\alpha(S^+_{n,k}),
$
and thus
$
H\cong S^+_{n,k}.
$
This proves the upper bound and the equality case in Theorem~\ref{thm:main}.

We now prove the linear lower bound in Theorem~\ref{thm:main}.
For $n=2k+1$, the complete graph $K_n$ contains no $C_{2k+2}$. Moreover $K_n$ is connected and properly contains $S^+_{n,k}$ as a spanning subgraph. For every $\alpha<1$, by Lemma~\ref{lem:strict-monotonicity}, we have
$$
        \rho_\alpha(S^+_{n,k})<\rho_\alpha(K_n)=n-1.
$$
Thus the threshold cannot be $o(k)$, and in particular $N_\eps(k)\ge2k+2$. Together with the upper bound already proved, we have $N_\eps(k)=\Theta_\eps(k)$ and this completes the proof of Theorem~\ref{thm:main}.
\end{proof}

\section{Applications}
In this section we give applications of Theorem~\ref{thm:main} and of the tools developed in the proof. We first derive a consecutive-cycle consequence by applying the main theorem with all smaller parameters. We then apply the weighted rooted estimates to local-density problems and show that hereditary sparsity inside neighborhoods yields $A_\alpha$-spectral upper estimates for several related forbidden configurations.

\subsection{Consecutive even cycles}

The first application concerns intervals of cycle lengths. Woodall's theorem gives an edge condition forcing cycles of all lengths in a long interval, and Li and Ning initiated spectral analogues of this problem, proving that a spectral condition can force cycles of consecutive lengths in a linear interval \cite{Woodall1976,LiNingConsecutive2023}. Their later stability theorem and the subsequent work of Zhang further developed this direction by connecting spectral radius, maximum average degree and consecutive cycle lengths \cite{LiNing2023,Zhang2024ConsecutiveCycles,Zhang2025Skeletons}. Our consequence has a different form, since it does not start from a global threshold such as $\lambda(G)>\sqrt{\lfloor n^2/4\rfloor}$, but instead starts from the exact even-cycle threshold $\rho_\alpha(S^+_{n,k})$. Once this threshold is exceeded, Theorem~\ref{thm:main} applies with every smaller parameter, and consequently all even cycle lengths in the corresponding interval must occur.

\begin{corollary}
\label{cor:aalpha-consecutive-even-cycles}
Fix $\varepsilon>0$. Let $C_\varepsilon$ and $k_\varepsilon$ be as in Theorem~\ref{thm:main}. Let $0\le \alpha\le 1-\varepsilon$, let $k\ge k_\varepsilon$, and let $n\ge C_\varepsilon k$. If an $n$-vertex graph $G$ satisfies
$
        \rho_\alpha(G)>\rho_\alpha(S^+_{n,k}),
$
then $G$ contains $C_{2t+2}$ for every integer $t$ with $k_\varepsilon\le t\le k$. Equivalently, $G$ contains cycles of all even lengths
\[
        2k_\varepsilon+2,\ 2k_\varepsilon+4,\ \ldots,\ 2k+2.
\]
\end{corollary}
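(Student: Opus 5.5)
We plan to prove the corollary by applying Theorem~\ref{thm:main} once for each parameter $t$ with $k_\varepsilon\le t\le k$, combined with a monotonicity comparison between the extremal graphs $S^+_{n,t}$ and $S^+_{n,k}$.

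Fix $t$ with $k_\varepsilon\le t\le k$. The hypotheses of Theorem~\ref{thm:main} hold with $t$ in place of $k$. Indeed, $0\le\alpha\le1-\varepsilon$ is unchanged, $t\ge k_\varepsilon$, and $n\ge C_\varepsilon k\ge C_\varepsilon t$. It therefore suffices to show that
\[
\rho_\alpha(G)>\rho_\alpha(S^+_{n,t}).
\]
If $G$ were $C_{2t+2}$-free, Theorem~\ref{thm:main} would give $\rho_\alpha(G)\le\rho_\alpha(S^+_{n,t})$, which contradicts this inequality. So $G$ must contain $C_{2t+2}$.

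Since $\rho_\alpha(G)>\rho_\alpha(S^+_{n,k})$ by hypothesis, the required inequality follows from
\[
\rho_\alpha(S^+_{n,t})\le\rho_\alpha(S^+_{n,k})\qquad\text{for } t\le k.
\]
We prove this by exhibiting $S^+_{n,t}$ as a spanning subgraph of a graph isomorphic to $S^+_{n,k}$. Let $L_t$ be the clique of size $t$ in $S^+_{n,t}$, let $R_t$ be its independent part, and let $ab$ be the extra edge inside $R_t$. Since $n\ge C_\varepsilon k$ is large, $|R_t|=n-t\ge k-t+2$, so we can choose $k-t$ vertices of $R_t\setminus\{a,b\}$ and move them into the clique part. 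Now add all edges among $L_t$ together with these moved vertices, and all edges from them to the rest of the independent part. The result is $K_k\vee\overline K_{n-k}$ plus the edge $ab$, which still lies inside the new independent part. This graph is isomorphic to $S^+_{n,k}$ and contains $S^+_{n,t}$ as a spanning subgraph.

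Because $A_\alpha$ is entrywise monotone under adding edges, we get $\rho_\alpha(S^+_{n,t})\le\rho_\alpha(S^+_{n,k})$; Lemma~\ref{lem:strict-monotonicity} even gives strict inequality when $t<k$. For $t=k$ the needed inequality is the hypothesis itself.

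The only point that needs care is that $ab$ stays inside the independent part after enlarging the clique. This is ensured by not moving $a$ or $b$, which is possible because $n-t\ge k-t+2$. The equivalence with the list of even lengths $2k_\varepsilon+2,\ldots,2k+2$ is immediate from $2t+2$ as $t$ ranges over $k_\varepsilon\le t\le k$.
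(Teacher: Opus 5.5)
Your proof is correct and matches the paper's argument. You apply Theorem~\ref{thm:main} with parameter $t$ (valid since $t\ge k_\varepsilon$ and $n\ge C_\varepsilon k\ge C_\varepsilon t$), and you embed $S^+_{n,t}$ as a spanning subgraph of $S^+_{n,k}$ by putting its clique inside the $k$-clique and keeping the same extra edge, so monotonicity gives $\rho_\alpha(S^+_{n,t})\le\rho_\alpha(S^+_{n,k})<\rho_\alpha(G)$; your remark that non-strict monotonicity already suffices, where the paper invokes the strict version, is also right.
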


\begin{proof}
Fix an integer $t$ with $k_\varepsilon\le t\le k$. Since $n\ge C_\varepsilon k\ge C_\varepsilon t$, Theorem~\ref{thm:main} applies with parameter $t$.
If $t=k$ and $G$ contained no $C_{2k+2}$, then by Theorem~\ref{thm:main}, we have
$
        \rho_\alpha(G)\le \rho_\alpha(S^+_{n,k}),
$
a contradiction.
Thus we may assume $t<k$. We view $S^+_{n,t}$ as a proper spanning subgraph of $S^+_{n,k}$ as follows. Choose the $K_t$ part of $S^+_{n,t}$ inside the $K_k$ part of $S^+_{n,k}$, and choose the added independent-side edge to be the same added edge in the independent part of $S^+_{n,k}$. Then every edge of $S^+_{n,t}$ is an edge of $S^+_{n,k}$, and the containment is proper. Since $S^+_{n,k}$ is connected and $\alpha<1$, strict Perron--Frobenius monotonicity for $A_\alpha$ gives
$
        \rho_\alpha(S^+_{n,t})<\rho_\alpha(S^+_{n,k}).
$
If $G$ contained no $C_{2t+2}$, then by applying Theorem~\ref{thm:main} with parameter $t$, we have
$
        \rho_\alpha(G)\le \rho_\alpha(S^+_{n,t})<\rho_\alpha(S^+_{n,k}),
$
a contradiction.
Hence $G$ contains $C_{2t+2}$. Since $t$ was arbitrary, the result follows.
\end{proof}

\subsection{\texorpdfstring{A template switching principle for $A_\alpha$}{A template switching principle for A-alpha}}

Many exact spectral extremal results have the same final step. One first proves that every admissible graph is contained in, or can be switched into, a member of a small family of comparison templates; then Perron--Frobenius monotonicity shows that the maximum spectral radius is attained by one of the templates themselves. This mechanism appears in several recent spectral Tur\'an-type results, including results for color-critical graphs, disjoint color-critical graphs, wheels, fans and general spectral extremal theorems \cite{CioabaDesaiTait2023,LeiLi2024,ZhaoHuangLin2021,Zhang2025Fan,ByrneDesaiTait2025}. The following proposition isolates this final monotonicity step in the $A_\alpha$ setting. 

\begin{proposition}
\label{prop:aalpha-template-switching}
Let $F$ be a fixed graph, let $0\le\alpha<1$, and let $\mathcal T$ be a family of connected $F$-free graphs on the same vertex set. Define
$
        \rho_\alpha(\mathcal T)=\max_{T\in\mathcal T}\rho_\alpha(T).
$
Suppose that every $F$-free graph $G$ under consideration is a subgraph of some $T\in\mathcal T$. Then
$
        \rho_\alpha(G)\le \rho_\alpha(\mathcal T).
$
Moreover, if $G$ has maximum $A_\alpha$-spectral radius among all such $F$-free graphs and equality holds, then $G$ itself is a member of $\mathcal T$ attaining $\rho_\alpha(\mathcal T)$.
\end{proposition}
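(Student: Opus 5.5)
The plan is to combine monotonicity of $\rho_\alpha$ under taking subgraphs with the strict monotonicity of Lemma~\ref{lem:strict-monotonicity}.

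\emph{The bound.} Let $G$ be an $F$-free graph under consideration, and take $T\in\mathcal T$ with $G\subseteq T$, both on the common vertex set. Then $A_\alpha(T)-A_\alpha(G)$ is entrywise nonnegative. Indeed, adding an edge $uw$ raises the diagonal entries at $u$ and $w$ by $\alpha$ and the off-diagonal entries $(u,w)$ and $(w,u)$ by $\beta=1-\alpha$. So $A_\alpha(G)\le A_\alpha(T)$ entrywise. Let $y$ be a nonnegative Perron vector of $A_\alpha(G)$. Then
\[
A_\alpha(T)y\ge A_\alpha(G)y=\rho_\alpha(G)y .
\]
By Lemma~\ref{lem:matrix}, this gives $\rho_\alpha(T)\ge\rho_\alpha(G)$. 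If $\rho_\alpha(G)=0$ there is nothing to prove. Hence $\rho_\alpha(G)\le\rho_\alpha(T)\le\rho_\alpha(\mathcal T)$.

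\emph{The equality case.} Suppose $G$ is extremal among the $F$-free graphs under consideration and $\rho_\alpha(G)=\rho_\alpha(\mathcal T)$. Choose $T\in\mathcal T$ with $G\subseteq T$. The chain above forces $\rho_\alpha(T)=\rho_\alpha(G)=\rho_\alpha(\mathcal T)$. If $G$ were a proper spanning subgraph of $T$, then, since $T$ is connected and $\alpha<1$, Lemma~\ref{lem:strict-monotonicity} would give $\rho_\alpha(G)<\rho_\alpha(T)$, a contradiction. Therefore $G=T\in\mathcal T$, and $G$ attains $\rho_\alpha(\mathcal T)$.

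\emph{The main obstacle.} The only delicate point is interpretive. "Subgraph" must mean a spanning subgraph on the same vertex set, possibly after adding isolated vertices, so that the matrices have the same size and Lemma~\ref{lem:strict-monotonicity} applies. I would state this convention explicitly, noting that all graphs have the same number of vertices $n$. Extremality of $G$ is not actually needed beyond the assumed equality $\rho_\alpha(G)=\rho_\alpha(\mathcal T)$.
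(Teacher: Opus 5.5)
Your proposal is correct and follows the paper's proof: entrywise domination $A_\alpha(G)\le A_\alpha(T)$ gives the bound via Perron--Frobenius monotonicity, and strict monotonicity for the connected template $T$ (Lemma~\ref{lem:strict-monotonicity}, using $\alpha<1$) forces $G=T$ in the equality case. You are also right that extremality of $G$ plays no role beyond the assumed equality, and that ``subgraph'' has to be read as a spanning subgraph on the common vertex set; the paper's proof makes the same implicit assumption.
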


\begin{proof}
Choose $T\in\mathcal T$ such that $G\subseteq T$. Since $A_\alpha(G)\le A_\alpha(T)$ entrywise, Perron--Frobenius monotonicity gives
\[
        \rho_\alpha(G)\le \rho_\alpha(T)\le \rho_\alpha(\mathcal T).
\]
If equality holds and $G$ is a proper subgraph of the connected graph $T$, then strict Perron--Frobenius monotonicity for $A_\alpha$ with $\alpha<1$ gives
\[
        \rho_\alpha(G)<\rho_\alpha(T),
\]
a contradiction. Hence equality forces $G=T$ for some $T\in\mathcal T$. Since $\rho_\alpha(G)=\rho_\alpha(\mathcal T)$, this template $T$ attains $\rho_\alpha(\mathcal T)$.
\end{proof}
\subsection{Asymptotically sharp local-density estimates}

The next application gives an asymptotically sharp $A_\alpha$ estimate for
forbidden configurations that are detected inside neighborhoods.  The point is
that for $A_\alpha$, the correct first-order term is not always $n/2$.
There is a phase transition at $\alpha=1/2$: balanced complete bipartite graphs
give the lower bound $n/2+O(1)$, while stars give the lower bound
$\alpha n+O(1)$.  The following local-density theorem matches these two
constructions up to an additive constant depending only on the local-density
parameter.

\begin{lemma}
\label{lem:weighted-hereditary-sparsity-sharp}
Let $Q$ be a graph and let $c\ge0$. Suppose that for every $Y\subseteq V(Q)$, $e(Q[Y])\leq c|Y|$.
Then, for all nonnegative functions $f,g:V(Q)\to[0,\infty)$,
\[
        \sum_{u\in V(Q)}f(u)\sum_{v\sim_Q u}g(v)
        \le
        c\|g\|_\infty\sum_{u\in V(Q)}f(u)
        +
        c\|f\|_\infty\sum_{v\in V(Q)}g(v).
\]
\end{lemma}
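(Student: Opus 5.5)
The plan is to mirror the proof of Lemma~\ref{lem:rooted-weighted-general}. First we prove an unweighted set version, and then we integrate over level sets. The unweighted statement we aim for is the following: for all $S,T\subseteq V(Q)$,
\[
        \sum_{t\in T}d_S(t)\le c|S|+c|T|.
\]
The left-hand side counts ordered pairs $(s,t)$ with $s\in S$, $t\in T$, $st\in E(Q)$. Each edge of $Q[S\cup T]$ contributes at most two such ordered pairs, namely when both of its endpoints lie in $S\cap T$. Edges with an endpoint outside $S\cup T$ contribute nothing. Hence the count is at most $2e(Q[S\cup T])$.

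A cruder bound such as $2c|S\cup T|$ is not enough when $S$ and $T$ overlap, so we split the edges of $Q[S\cup T]$ more carefully:
\begin{itemize}
\item an edge with both ends in $S\cap T$ contributes $2$;
\item an edge with exactly one end in $S\cap T$, say the other end in $S\setminus T$ or in $T\setminus S$, contributes $1$;
\item an edge between $S\setminus T$ and $T\setminus S$ contributes $1$;
\item edges inside $S\setminus T$ or inside $T\setminus S$ contribute $0$.
\end{itemize}
So the count equals $e(Q[S])+e(Q[T])$: an edge with both ends in $S\cap T$ is counted in both terms, and each other contributing edge lies in exactly one of $Q[S]$, $Q[T]$. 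Let us double-check the cross edges. An edge between $S\setminus T$ and $T\setminus S$ lies in neither $Q[S]$ nor $Q[T]$, yet contributes $1$. So the count is in fact $e(Q[S])+e(Q[T])+e(S\setminus T,T\setminus S)$, and the exact identity fails; we need a different bound.

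The bound we use instead is
\[
        \sum_{t\in T}d_S(t)\le e(Q[S\cup T])+e(Q[S\cap T]).
\]
The edges of $Q[S\cup T]$ that contribute $2$ are exactly the edges of $Q[S\cap T]$, every other edge contributes at most $1$, and every contributing edge lies in $Q[S\cup T]$. The hypothesis then gives
\[
        \sum_{t\in T}d_S(t)\le c|S\cup T|+c|S\cap T|=c|S|+c|T|,
\]
which is exactly the set inequality we wanted. This step is the main point: using $|S\cup T|+|S\cap T|=|S|+|T|$ is what avoids a loss of a factor $2$.

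For the weighted statement we may assume $\|f\|_\infty,\|g\|_\infty>0$ and, by homogeneity, normalize to $0\le f,g\le1$. Set $S_p=\{u:f(u)\ge p\}$ and $T_q=\{v:g(v)\ge q\}$. Apply the set inequality to $(S_p,T_q)$ and integrate over $(p,q)\in[0,1]^2$. We use the same three identities as in the proof of Lemma~\ref{lem:rooted-weighted-general}:
\[
        \int_0^1|S_p|\,dp=\sum_u f(u),\qquad
        \int_0^1|T_q|\,dq=\sum_v g(v),
\]
and
\[
        \int_0^1\!\!\int_0^1\sum_{t\in T_q}d_{S_p}(t)\,dp\,dq=\sum_u f(u)\sum_{v\sim u}g(v).
\]
Undoing the normalization by rescaling yields the stated bound with the factors $\|g\|_\infty$ and $\|f\|_\infty$ in place.

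The only delicate point is the counting inequality $\sum_{t\in T}d_S(t)\le e(Q[S\cup T])+e(Q[S\cap T])$. Once it is verified edge by edge as above, everything else follows routinely from the argument already used for Lemma~\ref{lem:rooted-weighted-general}.
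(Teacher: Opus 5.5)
Your proposal is correct and is essentially the paper's proof: the set inequality $\sum_{t\in T}d_S(t)\le e(Q[S\cup T])+e(Q[S\cap T])\le c|S|+c|T|$, followed by integration over the level sets $S_p,T_q$ and rescaling. In a write-up you can drop the abandoned attempt at an identity with $e(Q[S])+e(Q[T])$, since your edge-by-edge check already justifies the bound you actually use.
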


\begin{proof}
If $\|f\|_\infty\|g\|_\infty=0$, there is nothing to prove. By homogeneity we
may assume $0\le f,g\le1$. For $0\le s,t\le1$, define
$
        S_s=\{u:f(u)\ge s\}$, and $
        T_t=\{v:g(v)\ge t\}.
$
For arbitrary $S,T\subseteq V(Q)$ we have
\[
        \sum_{v\in T}d_S(v)
        \le e(Q[S\cup T])+e(Q[S\cap T])
        \le c|S\cup T|+c|S\cap T|
        =c|S|+c|T|.
\]
Indeed, every edge counted by $\sum_{v\in T}d_S(v)$ lies in $Q[S\cup T]$,
and the only possible double-counting comes from edges inside $S\cap T$.
Integrating over the level sets gives
\[
\begin{aligned}
        \sum_{u}f(u)\sum_{v\sim_Q u}g(v)
        &=
        \int_0^1\int_0^1 \sum_{v\in T_t}d_{S_s}(v)\,dt\,ds \\
        &\le
        c\int_0^1|S_s|\,ds+c\int_0^1|T_t|\,dt \\
        &=
        c\sum_u f(u)+c\sum_v g(v).
\end{aligned}
\]
Rescaling $f$ and $g$ proves the general statement.
\end{proof}

\begin{lemma}
\label{lem:local-density-adj-signless}
Let $G$ be an $n$-vertex graph and let $c\ge0$. Suppose that, for every
$z\in V(G)$ and every $Y\subseteq N_G(z)$,
$
        e(G[Y])\le c|Y|.
$
Let $\lambda(G)$ be the adjacency spectral radius and let
$q(G)=\rho(D(G)+A(G))$ be the signless Laplacian spectral radius. Then
$
        \lambda(G)\le \frac n2+c$, 
        and $
        q(G)\le n+2c.
$
\end{lemma}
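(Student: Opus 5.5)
We prove both bounds with the same second-moment expansion of the Perron equation, controlling the neighborhood terms by Lemma~\ref{lem:weighted-hereditary-sparsity-sharp}. Let $x$ be a nonnegative Perron vector, normalized by $\max_u x_u=1$, and fix $z$ with $x_z=1$. For the adjacency matrix put $\lambda=\lambda(G)$. Expanding $A^2x=\lambda^2x$ at $z$ gives
\[
\lambda^2=\sum_{u\sim z}\sum_{w\sim u}x_w .
\]
For each neighbor $u$ of $z$, split $N(u)$ into $N(u)\cap N(z)$ and $N(u)\setminus N(z)$. The second part is disjoint from $N(z)$, so summing over $u\in N(z)$ bounds its total by the weighted count of edges from $N(z)$ to $V(G)\setminus N(z)$.

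The standard trick is to write the double sum edge by edge. With $d=d(z)$, the edges between $N(z)$ and its complement contribute at most $\sum_{w\notin N(z)}x_w\,d_{N(z)}(w)\le d\sum_{w\notin N(z)}x_w$. This alone is not good enough. Instead we combine it with $\lambda=\sum_{u\sim z}x_u$ as follows:
\[
\lambda^2=\sum_{u\sim z}\sum_{w\sim u,\,w\in N(z)}x_w+\sum_{u\sim z}\sum_{w\sim u,\,w\notin N(z)}x_w .
\]
For the first term we apply Lemma~\ref{lem:weighted-hereditary-sparsity-sharp} to $Q=G[N(z)]$ with $f\equiv1$ and $g=x$. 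The hypothesis $e(G[Y])\le c|Y|$ for all $Y\subseteq N(z)$ is exactly what the lemma needs. This bounds the first term by $c\,d+c\lambda$. For the second term each $w\notin N(z)$ has $x_w\le1$ and at most $d$ neighbors in $N(z)$, so it is at most $e(N(z),\overline{N(z)})\le d(n-d)$. Hence
\[
\lambda^2\le d(n-d)+c(d+\lambda)\le \tfrac{n^2}{4}+cn+c\lambda ,
\]
using $d\le n$ in the middle term. Solving the quadratic $\lambda^2-c\lambda-(n^2/4+cn)\le0$ gives
\[
\lambda\le \tfrac{c}{2}+\sqrt{c^2/4+n^2/4+cn}\le \tfrac c2+\tfrac n2+c .
\]
This is slightly weaker than the claimed $\lambda\le n/2+c$. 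To recover the sharp constant we refine the edge-count step: instead of $d(n-d)$, bound the second term by $\sum_{w\notin N(z)}x_w d_{N(z)}(w)$. Here we use $\lambda x_w\ge \sum_{u\sim w}x_u$, so that the loss $c\lambda$ can be traded against the deficit in $d(n-d)\le n^2/4$ when $d$ is near $n/2$. We expect this constant bookkeeping to be the main obstacle. If it resists, we would settle for $n/2+O(c)$ and then retune.

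For the signless Laplacian $q=q(G)$, the matrix $D+A$ has the same edge structure. The Perron equation at $z$ reads $(q-d)=\sum_{u\sim z}x_u$. We iterate once more and weight by $x$. Summing $(q-d(u))x_u=\sum_{w\sim u}x_w$ over $u\sim z$ gives
\[
(q-d)^2\le \sum_{u\sim z}(q-d(u))x_u\cdot\text{(correction)},
\]
so the cleaner route is to use the edge form $q=\max_x \sum_{uv\in E}(x_u+x_v)^2/\|x\|^2$. We would bound $\sum_{u}d(u)x_u^2+2\sum_{uv}x_ux_v$ by splitting every edge $uv$ by whether it lies inside some neighborhood. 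Then we apply Lemma~\ref{lem:weighted-hereditary-sparsity-sharp} with $f=g=x$ inside $N(z)$ to absorb the triangle-type terms into $2c$. The remaining bipartite-type terms are bounded by $d(z)+d(u)\le n$ for each edge $zu$, in the spirit of the classical bound $q\le\max_{uv\in E}(d(u)+d(v))$. Concretely, we would show
\[
q\le \max_{uv\in E}\bigl(d(u)+d(v)-|N(u)\cap N(v)|\bigr)+2c ,
\]
where the common-neighborhood correction is controlled by the local sparsity via the lemma. This yields $q\le n+2c$.
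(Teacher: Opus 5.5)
Your proposal has a gap in each half.

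\textbf{Adjacency bound.} Your computation is sound up to $\lambda^2\le d(n-d)+cd+c\lambda$, which is exactly the paper's $\lambda^2-c\lambda\le d(n+c-d)$. You then bound $d(n-d)\le n^2/4$ and $cd\le cn$ separately, which, as you note, only gives about $n/2+3c/2$. The proposed repair via $\lambda x_w\ge\sum_{u\sim w}x_u$ is never carried out.

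The missing idea is that the eigen-equation at $z$ also gives a \emph{lower} bound on $d$: $\lambda=\sum_{u\sim z}x_u\le d$, since $x_u\le1$. Keep the two terms together as $d(n+c-d)$. If $\lambda\le(n+c)/2$ you are done. Otherwise $d\ge\lambda>(n+c)/2$, and $t\mapsto t(n+c-t)$ is decreasing on $[(n+c)/2,\infty)$, so $\lambda^2-c\lambda\le\lambda(n+c-\lambda)$, i.e.\ $\lambda\le n/2+c$. Merely using $d(n+c-d)\le(n+c)^2/4$ still overshoots by a term of order $c^2/n$, so the exact constant really comes from $d\ge\lambda$.

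\textbf{Signless Laplacian bound.} This part is not a proof. The display with a ``(correction)'' factor is not an inequality, and the edge-local bound $q\le\max_{uv\in E}\bigl(d(u)+d(v)-|N(u)\cap N(v)|\bigr)+2c$ is only announced. Since $d(u)+d(v)-|N(u)\cap N(v)|=|N(u)\cup N(v)|\le n$, it would imply the lemma outright, so announcing it just relocates the whole difficulty. Nothing in your sketch shows how to reach it. With $f=g=x$, Lemma~\ref{lem:weighted-hereditary-sparsity-sharp} bounds a sum over one fixed neighborhood by $2c\|x\|_\infty\sum_{u\in N(z)}x_u$. That is an $\ell^\infty\cdot\ell^1$ quantity, not comparable to the denominator $\|x\|_2^2$ of a global Rayleigh quotient.

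The route you abandoned is the one the paper uses, as a first-moment rather than a second-moment argument. Let $W=N(z)$, $S=\sum_{u\in W}x_u$ and $R=V(G)\setminus(W\cup\{z\})$; the equation at $z$ reads $q=d+S$. Sum $qx_u=d(u)x_u+\sum_{y\sim u}x_y$ over $u\in W$ and sort edges by their other endpoint:
\begin{itemize}
\item edges to $z$ contribute $S+d$;
\item edges inside $W$ contribute $2\sum_{u\in W}d_{G[W]}(u)x_u\le 2c(S+d)$, by the lemma with $f=x|_W$ and $g\equiv1$;
\item edges from $W$ to $R$ contribute at most $|R|(S+d)$.
\end{itemize}
Hence $qS\le(S+d)(1+2c+|R|)=q(n-d+2c)$, so $S\le n-d+2c$ and $q=d+S\le n+2c$. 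The point is that every contribution is a multiple of $S+d=q$, which you can divide out instead of squaring.
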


\begin{proof}
We first prove the adjacency estimate. If $\lambda(G)=0$, there is nothing to
prove. Otherwise let $x$ be a nonnegative Perron vector of $A(G)$ normalized by
$\max_v x_v=1$. Choose $z$ with $x_z=1$, and define
$
        W=N_G(z)$, $d=|W|$, and $ R=V(G)\setminus(W\cup\{z\}).$
The eigenvalue equation at $z$ gives
$        \sum_{u\in W}x_u=\lambda(G),
$
so $d\ge\lambda(G)$. Write $\lambda=\lambda(G)$. Applying the eigenvalue
equation once more at $z$ gives
$$
\begin{aligned}
        \lambda^2
        =
        \sum_{u\in W}\sum_{y\sim u}x_y  
        =
        d+
        \sum_{u\in W}\sum_{\substack{y\in W\\uy\in E(G)}}x_y
        +
        \sum_{u\in W}\sum_{\substack{y\in R\\uy\in E(G)}}x_y .
\end{aligned}
$$
The last term is at most $d|R|=d(n-d-1)$. Let $Q=G[W]$. By
Lemma~\ref{lem:weighted-hereditary-sparsity-sharp}, applied to $Q$ with
$f\equiv1$ and $g=x|_W$,
\[
        \sum_{u\in W}\sum_{\substack{y\in W\\uy\in E(G)}}x_y
        \le cd+c\sum_{y\in W}x_y
        =
        cd+c\lambda .
\]
Hence
\[
        \lambda^2
        \le
        d+d(n-d-1)+cd+c\lambda
        =
        d(n+c-d)+c\lambda .
\]
Thus $
        \lambda^2-c\lambda\le d(n+c-d).$
If $\lambda\le(n+c)/2$, then $\lambda\le n/2+c$. Otherwise
$\lambda>(n+c)/2$. Since $d\ge\lambda$ and the function
$t\mapsto t(n+c-t)$ is decreasing on $[(n+c)/2,\infty)$, we get
$
        d(n+c-d)\le \lambda(n+c-\lambda).
$
Therefore
$
        \lambda^2-c\lambda\le\lambda(n+c-\lambda),
$
and so $2\lambda\le n+2c$. This proves
$
        \lambda(G)\le \frac n2+c.
$

We now prove the signless Laplacian estimate. If $q(G)=0$, the claim is
trivial. Let $x$ be a nonnegative Perron vector of $Q(G)=D(G)+A(G)$,
normalized by $\max_v x_v=1$, and choose $z$ with $x_z=1$. Use the same
notation
$
        W=N_G(z)$, $ d=|W|$, and $R=V(G)\setminus(W\cup\{z\}).
$
Let
$
        S=\sum_{u\in W}x_u .
$
The signless eigenvalue equation at $z$ gives
$
        q(G)=d+S.
$
Write $q=q(G)$. Summing the signless eigenvalue equations over $u\in W$ gives
\[
        qS
        =
        \sum_{u\in W}d(u)x_u
        +
        \sum_{u\in W}\sum_{y\sim u}x_y .
\]
We split the right-hand side. The edge contributions through $z$ give $S+d$.
The internal contribution inside $W$ is
$
        2\sum_{u\in W}d_{G[W]}(u)x_u.
$
By Lemma~\ref{lem:weighted-hereditary-sparsity-sharp}, applied to $G[W]$ with
$f=x|_W$ and $g\equiv1$,
\[
        \sum_{u\in W}d_{G[W]}(u)x_u
        \le cS+cd.
\]
The contribution from edges between $W$ and $R$ is at most
$
        |R|S+d|R|=|R|(S+d),
$
because $d_R(u)\le |R|$ for each $u\in W$ and $x_y\le1$ for each $y\in R$.
Consequently,
\[
        qS
        \le
        S+d+2c(S+d)+|R|(S+d)
        =
        (S+d)(|R|+1+2c).
\]
Since $S+d=q$ and $|R|=n-d-1$, this gives
$
        qS\le q(n-d+2c).
$
As $q>0$, we obtain
$
        S\le n-d+2c.
$
Therefore
$
        q=d+S\le n+2c.
$
This proves the lemma.
\end{proof}

\begin{theorem}
\label{thm:sharp-local-density-aalpha}
Let $G$ be an $n$-vertex graph and let $c\ge0$. Suppose that, for every
$z\in V(G)$ and every $Y\subseteq N_G(z)$,
$
        e(G[Y])\le c|Y|.
$
Then, for every $0\le\alpha\le1$,
\[
        \rho_\alpha(G)
        \le
        \begin{cases}
        \displaystyle \frac n2+c,
        & 0\le\alpha\le \frac12,\\[6pt]
        \displaystyle
        (2\alpha-1)(n-1)+(1-\alpha)(n+2c),
        & \frac12\le\alpha\le1.
        \end{cases}
\]
In particular, for fixed $c$,
\[
        \rho_\alpha(G)\le \max\{\alpha,1/2\}n+O_c(1).
\]
Moreover, the estimate is asymptotically sharp.
\end{theorem}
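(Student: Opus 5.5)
The plan is to reduce to the two endpoint cases $\alpha=0$ and $\alpha=\tfrac12$, plus the trivial case $\alpha=1$, which are controlled by Lemma~\ref{lem:local-density-adj-signless}. We then use the fact that $\rho_\alpha$ is a convex function of $\alpha$. Indeed, $A_\alpha(G)=\alpha D+(1-\alpha)A$ is affine in $\alpha$, and the largest eigenvalue of a symmetric matrix is a convex function of the matrix, being a maximum of Rayleigh quotients. Hence $\alpha\mapsto\rho_\alpha(G)$ is convex on $[0,1]$, and on each subinterval it lies below the chord joining its endpoint values.

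For $0\le\alpha\le\tfrac12$ we write $\alpha=(1-2\alpha)\cdot0+2\alpha\cdot\tfrac12$. The endpoint values are $\rho_0(G)=\lambda(G)\le n/2+c$ and $\rho_{1/2}(G)=q(G)/2\le n/2+c$, both by Lemma~\ref{lem:local-density-adj-signless}. Convexity then gives $\rho_\alpha(G)\le n/2+c$.

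For $\tfrac12\le\alpha\le1$ we write $\alpha=(2-2\alpha)\cdot\tfrac12+(2\alpha-1)\cdot 1$. Here $\rho_1(G)=\Delta(G)\le n-1$ and $\rho_{1/2}(G)\le (n+2c)/2$. Convexity gives
\[
\rho_\alpha(G)\le (2\alpha-1)(n-1)+(2-2\alpha)\tfrac{n+2c}{2}=(2\alpha-1)(n-1)+(1-\alpha)(n+2c),
\]
which is exactly the claimed bound. The ``in particular'' statement follows because each expression is $\max\{\alpha,\tfrac12\}n+O_c(1)$: in the second case the right-hand side equals $\alpha n+2c(1-\alpha)-(2\alpha-1)$.

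For asymptotic sharpness we exhibit the two constructions from the preceding discussion, checking in each case that they satisfy the hypothesis with some fixed $c$.
\begin{itemize}
\item The balanced complete bipartite graph $K_{\lfloor n/2\rfloor,\lceil n/2\rceil}$ has independent neighborhoods, so any $c\ge0$ works. It is regular of degree about $n/2$, so $\rho_\alpha=\lfloor n/2\rfloor$ for all $\alpha$. This matches the first regime up to $O(1)$.
\item The star $K_{1,n-1}$ also has independent neighborhoods. Its spectral radius satisfies $\rho_\alpha\ge\alpha(n-1)$: by Lemma~\ref{lem:matrix} with the test vector $\mathbf 1$ on the center and $0$ elsewhere, or via the diagonal entry $\alpha(n-1)$ together with the Rayleigh quotient. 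This gives $\alpha n-O(1)$, matching the second regime.
\end{itemize}
Taking the larger of the two constructions gives a lower bound of $\max\{\alpha,1/2\}n-O(1)$.

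The main point to verify carefully is the convexity step, since everything else is substitution. It is standard: for $\alpha=t\alpha_1+(1-t)\alpha_2$ we have $A_\alpha=tA_{\alpha_1}+(1-t)A_{\alpha_2}$, and $\lambda_{\max}$ is subadditive and positively homogeneous. The only remaining care is the normalization $\rho_{1/2}=q/2$, which holds because $A_{1/2}=\tfrac12(D+A)$.
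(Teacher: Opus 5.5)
Your proof is correct and is the paper's argument: the convexity/chord step is the paper's decomposition $A_\alpha=\alpha Q(G)+(1-2\alpha)A(G)$, respectively $A_\alpha=(2\alpha-1)D(G)+(1-\alpha)Q(G)$, combined with subadditivity of the largest eigenvalue, the endpoint bounds come from Lemma~\ref{lem:local-density-adj-signless}, and the sharpness examples are the same. One small slip is that for odd $n$ the graph $K_{\lfloor n/2\rfloor,\lceil n/2\rceil}$ is not regular, so $\rho_\alpha$ is not exactly $\lfloor n/2\rfloor$; however, the row sums of $A_\alpha(G)$ are the degrees, so $\rho_\alpha\ge\delta=\lfloor n/2\rfloor$, which is all the sharpness claim needs (the paper avoids this by using $K_{\lfloor n/2\rfloor,\lfloor n/2\rfloor}$ plus an isolated vertex).
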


\begin{proof}
Let $Q(G)=D(G)+A(G)$ and let $q(G)=\rho(Q(G))$. By
Lemma~\ref{lem:local-density-adj-signless}, we have that
$
        \lambda(G)\le \frac n2+c$ and $
        q(G)\le n+2c.$
If $0\le\alpha\le1/2$, then
$
        A_\alpha(G)
        =
        \alpha Q(G)+(1-2\alpha)A(G).
$
Since both coefficients are nonnegative and the matrices are symmetric,
\[
    \rho_\alpha(G)
        \le
        \alpha q(G)+(1-2\alpha)\lambda(G) 
        \le
        \alpha(n+2c)+(1-2\alpha)\left(\frac n2+c\right)
        =
        \frac n2+c.
\]
If $1/2\le\alpha\le1$, then
$
        A_\alpha(G)
        =
        (2\alpha-1)D(G)+(1-\alpha)Q(G).
$
Thus
\[
        \rho_\alpha(G)
        \le
        (2\alpha-1)\Delta(G)+(1-\alpha)q(G)
        \le
        (2\alpha-1)(n-1)+(1-\alpha)(n+2c).
\]
This proves the upper bound.

It remains to show asymptotic sharpness. The graph
$K_{\lfloor n/2\rfloor,\lfloor n/2\rfloor}$, with isolated vertices added if
necessary to make the order exactly $n$, satisfies the local condition with
$c=0$, because every neighborhood is independent. It is
$\lfloor n/2\rfloor$-regular on its nontrivial component, and hence
\[
        \rho_\alpha\bigl(K_{\lfloor n/2\rfloor,\lfloor n/2\rfloor}\bigr)
        =
        \lfloor n/2\rfloor
        =
        \frac n2+O(1).
\]
On the other hand, the star $K_{1,n-1}$ also satisfies the local condition with
$c=0$, and
\[
        \rho_\alpha(K_{1,n-1})
        \ge
        \alpha\Delta(K_{1,n-1})
        =
        \alpha(n-1).
\]
Therefore the supremum over this local-density class is at least
$
        \max\{\alpha,1/2\}n-O(1),
$
which matches the upper bound up to an additive constant depending only on
$c$.
\end{proof}

\begin{corollary}
\label{cor:sharp-local-density-forbidden}
Let $G$ be an $n$-vertex graph and let $0\le\alpha\le1$. Then the following
hold.

\begin{enumerate}[label=(\roman*)]
\item If $G$ is $(K_1\vee P_\ell)$-free, where $P_\ell$ denotes the path of
order $\ell$, then
\[
        \rho_\alpha(G)
        \le
        \begin{cases}
        \displaystyle \frac n2+\frac{\ell-2}{2},
        & 0\le\alpha\le \frac12,\\[6pt]
        \displaystyle
        (2\alpha-1)(n-1)+(1-\alpha)(n+\ell-2),
        & \frac12\le\alpha\le1.
        \end{cases}
\]
In particular, for fixed $\ell$,
\[
        \rho_\alpha(G)\le \max\{\alpha,1/2\}n+O_\ell(1),
\]
and this is asymptotically sharp.

\item If $G$ is $F_s$-free, where $F_s$ is the friendship graph consisting of
$s$ triangles sharing one common vertex, then
\[
        \rho_\alpha(G)
        \le
        \begin{cases}
        \displaystyle \frac n2+s-1,
        & 0\le\alpha\le \frac12,\\[6pt]
        \displaystyle
        (2\alpha-1)(n-1)+(1-\alpha)(n+2s-2),
        & \frac12\le\alpha\le1.
        \end{cases}
\]
In particular, for fixed $s$,
\[
        \rho_\alpha(G)\le \max\{\alpha,1/2\}n+O_s(1),
\]
and this is asymptotically sharp.
\end{enumerate}
\end{corollary}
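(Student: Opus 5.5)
The plan is to reduce both parts to Theorem~\ref{thm:sharp-local-density-aalpha}. For each forbidden configuration we find a constant $c$ such that, for every $z\in V(G)$ and every $Y\subseteq N_G(z)$, we have $e(G[Y])\le c|Y|$. Substituting this $c$ into the two-regime bound of Theorem~\ref{thm:sharp-local-density-aalpha} then gives the stated inequalities. Asymptotic sharpness is inherited directly: $K_{\lfloor n/2\rfloor,\lfloor n/2\rfloor}$ (padded with isolated vertices) and the star $K_{1,n-1}$ are triangle-free, so they contain neither $K_1\vee P_\ell$ for $\ell\ge2$ nor $F_s$. These are exactly the constructions used in the sharpness part of that theorem.

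For (i), take $c=(\ell-2)/2$. Let $Y\subseteq N_G(z)$. If $G[Y]$ contained a path of order $\ell$, then adding $z$, which is adjacent to every vertex of $Y$, would give a copy of $K_1\vee P_\ell$. So $G[Y]$ has no path of order $\ell$, and Lemma~\ref{lem:EG} gives $e(G[Y])\le(\ell-2)|Y|/2$. Plugging $c=(\ell-2)/2$ into Theorem~\ref{thm:sharp-local-density-aalpha} yields $n/2+(\ell-2)/2$ for $\alpha\le1/2$ and $(2\alpha-1)(n-1)+(1-\alpha)(n+\ell-2)$ for $\alpha\ge1/2$, which are the stated bounds. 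For $\ell\le 2$ the statement is degenerate; we note only that $c=0$ remains valid, since $K_1\vee P_1=K_2$ and $K_1\vee P_2=K_3$.

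For (ii), take $c=s-1$. A copy of $F_s$ centred at $z$ is the same thing as a matching of size $s$ inside $G[N_G(z)]$. Hence for every $Y\subseteq N_G(z)$ the graph $G[Y]$ has matching number at most $s-1$, and we need $e(G[Y])\le (s-1)|Y|$. Take a maximum matching $M$ in $G[Y]$ and let $V(M)$ be its vertex set, so $|V(M)|\le 2(s-1)$. Then $V(M)$ is a vertex cover of $G[Y]$, and every edge has an endpoint in $V(M)$. This gives
\[
e(G[Y])\le \sum_{w\in V(M)}d_{G[Y]}(w)\le 2(s-1)(|Y|-1).
\]
This bound is a factor of two too weak. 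The sharper fact needed is that a graph with matching number $\nu$ has at most $\nu|Y|$ edges. We will get it by pairing off each matching edge $ab$. By maximality, $a$ and $b$ cannot have distinct private neighbours outside $V(M)$. Therefore $d_{\mathrm{out}}(a)+d_{\mathrm{out}}(b)\le |Y\setminus V(M)|$ unless one of them has out-degree at most $1$; an Erd\H{o}s--Gallai style count handles that small case and yields $d(a)+d(b)\le |Y|+O(1)$ per matching edge.

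Alternatively, and more cleanly, we would invoke the Erd\H{o}s--Gallai matching bound $\max\{\binom{2\nu+1}{2},\binom{\nu}{2}+\nu(|Y|-\nu)\}$. Both terms are at most $\nu|Y|$ once $|Y|\ge 2\nu+1$, and when $|Y|\le2\nu$ we have $\binom{|Y|}{2}\le \nu|Y|$ trivially.

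This step, verifying $e(G[Y])\le(s-1)|Y|$ from $\nu(G[Y])\le s-1$ with the exact constant, is the only non-routine point. We expect to settle it by citing the Erd\H{o}s--Gallai matching theorem and checking the inequalities above. Substituting $c=s-1$ into Theorem~\ref{thm:sharp-local-density-aalpha} then gives $n/2+s-1$ and $(2\alpha-1)(n-1)+(1-\alpha)(n+2s-2)$, as required.
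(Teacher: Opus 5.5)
Your proposal is correct and follows the paper's proof: reduce to Theorem~\ref{thm:sharp-local-density-aalpha}, using the Erd\H{o}s--Gallai path bound to get $c=(\ell-2)/2$ and the matching-number edge bound to get $c=s-1$, with sharpness inherited from the balanced complete bipartite graph and the star. The only difference is that you cite the Erd\H{o}s--Gallai matching theorem for $e(G[Y])\le(s-1)|Y|$, whereas the paper proves it directly: take a maximum matching of size $q$ with vertex set $U$, so that $W=Y\setminus U$ is independent and each matched edge sends at most $|W|+1$ edges to $W$, which gives $e(G[Y])\le q(2q-1)+q(|W|+1)=q|Y|$. This is the precise form of your pairing sketch; keeping $e(G[U])$ separate from $e(U,W)$ is what avoids the extra $O(1)$ per matching edge that your degree-sum version would incur.
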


\begin{proof}
For $(i)$, fix $z\in V(G)$. If $G[N_G(z)]$ contained a path of order $\ell$, then this path together with $z$ would form a copy of $K_1\vee P_\ell$. Therefore $G[N_G(z)]$ is $P_\ell$-free. The same is true for every induced subgraph $G[Y]$ with $Y\subseteq N_G(z)$. By the Erd\H{o}s--Gallai path theorem,
$
        e(G[Y])\le \frac{\ell-2}{2}|Y|.
$
Thus Theorem~\ref{thm:sharp-local-density-aalpha} applies with
$c=(\ell-2)/2$.

For $(ii)$, fix $z\in V(G)$. If $G[N_G(z)]$ contained a matching of size $s$, then those $s$ disjoint edges together with the apex $z$ would form a copy of $F_s$. Hence every induced subgraph $G[Y]$ with $Y\subseteq N_G(z)$ has matching number at most $s-1$.

We claim that every graph $Q$ on $m$ vertices with matching number at most
$s-1$ satisfies
$
        e(Q)\le (s-1)m.
$
Let $M$ be a maximum matching in $Q$, say $|M|=q\le s-1$. Put $U=V(M)$ and $W=V(Q)\setminus U$. Since $M$ is maximum, $W$ is independent. For each matched edge $ab\in M$, the number of edges from $\{a,b\}$ to $W$ is at most $|W|+1$. Indeed, if there were two distinct vertices $x,y\in W$ such that $ax$ and $by$ were edges, then replacing $ab$ by $ax$ and $by$ would produce a larger matching. Therefore,
$
        e(U,W)\le q(|W|+1).
$
Moreover, we have 
$
        e(Q[U])\le \binom{2q}{2}=q(2q-1).
$
Consequently,
\[
        e(Q)
        =
        e(Q[U])+e(U,W)
        \le
        q(2q-1)+q(|W|+1)
        =
        q(2q+|W|)
        =
        q|V(Q)|
        \le
        (s-1)|V(Q)|.
\]
Applying this to $Q=G[Y]$ gives
$
        e(G[Y])\le(s-1)|Y|.
$
Thus Theorem~\ref{thm:sharp-local-density-aalpha} applies with $c=s-1$.

For sharpness in both (i) and (ii), use the two constructions from Theorem~\ref{thm:sharp-local-density-aalpha}: the balanced complete bipartite graph gives $\rho_\alpha\ge n/2-O(1)$, and the star gives $\rho_\alpha\ge\alpha(n-1)$. Both graphs are triangle-free and have independent neighborhoods, so they are both $(K_1\vee P_\ell)$-free and $F_s$-free.
\end{proof}

\section*{Acknowledgements}

This work was supported by the National Key R\&D Program of China
(No.~2022YFA1006400) and the National Natural Science Foundation of China
(No.~12571376).

\section*{Declaration}
\noindent$\textbf{Conflict~of~interest}$
The authors declare that they have no known competing financial interests or personal relationships that could have appeared to influence the work reported in this paper.
	
\noindent$\textbf{Data~availability}$
Data sharing not applicable to this paper as no datasets were generated or analysed during the current study.

\medskip

\noindent\textbf{Use of generative AI tools.} During an initial exploratory stage, the authors used generative AI tools to assist in producing a preliminary proof for the adjacency-spectral problem. After independent verification, the AI-assisted argument established only the cubic upper bound $N(k)=O(k^3)$, based on a Perron-coordinate cutoff of order $1/k$ and switching estimates. The improvement to the linear range was developed by the authors. By replacing the cutoff-based estimates with a weighted rooted Erd\H{o}s--Gallai lemma, and combining it with a hub-deficit analysis, a completion inequality, comparison with $\rho_\alpha(S_{n,k})$, and $A_\alpha$-Rayleigh switching, the authors proved $N_\varepsilon(k)=\Theta_\varepsilon(k)$ for $0\le\alpha\le1-\varepsilon$. Generative AI tools were also used to help streamline the presentation of some proofs and polish the text. All AI-assisted material retained in the manuscript was independently checked and revised by the authors, who take full responsibility for the final content.

\bibliographystyle{abbrv}
\bibliography{alpha_even_cycle}

@article{Mantel1907,
  author  = {Mantel, W.},
  title   = {Problem 28},
  journal = {Wiskundige Opgaven met de Oplossingen},
  volume  = {10},
  pages   = {60--61},
  year    = {1907}
}

@article{Turan1941,
  author  = {Tur{\'a}n, Paul},
  title   = {On an extremal problem in graph theory},
  journal = {Mat. Fiz. Lapok},
  volume  = {48},
  pages   = {436--452},
  year    = {1941}
}

@mastersthesis{Nosal1970,
  author = {Nosal, Edward},
  title  = {Eigenvalues of Graphs},
  school = {University of Calgary},
  year   = {1970}
}

@article{Nikiforov2007,
  author  = {Nikiforov, Vladimir},
  title   = {Bounds on graph eigenvalues {II}},
  journal = {Linear Algebra Appl.},
  volume  = {427},
  number  = {2--3},
  pages   = {183--189},
  year    = {2007},
  doi     = {10.1016/j.laa.2007.06.019}
}

@article{Nikiforov2008,
  author  = {Nikiforov, Vladimir},
  title   = {A spectral condition for odd cycles in graphs},
  journal = {Linear Algebra Appl.},
  volume  = {428},
  number  = {7},
  pages   = {1492--1498},
  year    = {2008},
  doi     = {10.1016/j.laa.2007.10.018}
}

@article{Nikiforov2010,
  author  = {Nikiforov, Vladimir},
  title   = {The spectral radius of graphs without paths and cycles of specified length},
  journal = {Linear Algebra Appl.},
  volume  = {432},
  number  = {9},
  pages   = {2243--2256},
  year    = {2010},
  doi     = {10.1016/j.laa.2009.10.021}
}

@incollection{FurediSimonovits2013,
  author    = {F{\"u}redi, Zolt{\'a}n and Simonovits, Mikl{\'o}s},
  title     = {The history of degenerate (bipartite) extremal graph problems},
  booktitle = {Erd{\H{o}}s Centennial},
  series    = {Bolyai Soc. Math. Stud.},
  volume    = {25},
  pages     = {169--264},
  publisher = {J{\'a}nos Bolyai Math. Soc.},
  year      = {2013}
}

@article{Verstraete2000,
  author  = {Verstra{\"e}te, Jacques},
  title   = {On arithmetic progressions of cycle lengths in graphs},
  journal = {Combin. Probab. Comput.},
  volume  = {9},
  number  = {4},
  pages   = {369--373},
  year    = {2000},
  doi     = {10.1017/S0963548300004259}
}

@incollection{Verstraete2016,
  author    = {Verstra{\"e}te, Jacques},
  title     = {Extremal problems for cycles in graphs},
  booktitle = {Recent Trends in Combinatorics},
  series    = {IMA Vol. Math. Appl.},
  volume    = {159},
  pages     = {83--116},
  publisher = {Springer},
  address   = {Cham},
  year      = {2016},
  doi       = {10.1007/978-3-319-24298-9_4}
}

@article{CioabaDesaiTait2024,
  author  = {Cioab{\u a}, Sebastian M. and Desai, Dheer Noal and Tait, Michael},
  title   = {The spectral even cycle problem},
  journal = {Combinatorial Theory},
  volume  = {4},
  number  = {1},
  pages   = {Paper No. 10},
  year    = {2024},
  doi     = {10.5070/C64163847},
}

@article{ZhaiLin2020,
  author  = {Zhai, Mingqing and Lin, Huiqiu},
  title   = {Spectral extrema of graphs: Forbidden hexagon},
  journal = {Discrete Math.},
  volume  = {343},
  number  = {10},
  pages   = {112028},
  year    = {2020},
  doi     = {10.1016/j.disc.2020.112028}
}

@article{ZhaiWangFang2020,
  author  = {Zhai, Mingqing and Wang, Bing and Fang, Longfei},
  title   = {The spectral {Tur\'{a}n} problem about graphs with no $6$-cycle},
  journal = {Linear Algebra Appl.},
  volume  = {590},
  pages   = {22--31},
  year    = {2020},
  doi     = {10.1016/j.laa.2019.12.032}
}

@article{WangKangXue2023,
  author  = {Wang, Jing and Kang, Liying and Xue, Yisai},
  title   = {On a conjecture of spectral extremal problems},
  journal = {J. Combin. Theory Ser. B},
  volume  = {159},
  pages   = {20--41},
  year    = {2023},
  doi     = {10.1016/j.jctb.2022.11.002}
}

@article{Woodall1976,
  author  = {Woodall, D. R.},
  title   = {Maximal circuits of graphs. {I}},
  journal = {Acta Math. Acad. Sci. Hungar.},
  volume  = {28},
  number  = {1--2},
  pages   = {77--80},
  year    = {1976},
  doi     = {10.1007/BF01902497}
}

@article{LiNing2023,
  author  = {Li, Binlong and Ning, Bo},
  title   = {Stability of {Woodall's} theorem and spectral conditions for large cycles},
  journal = {Electron. J. Combin.},
  volume  = {30},
  number  = {1},
  pages   = {Paper No. 1.39},
  year    = {2023},
  doi     = {10.37236/11641}
}

@article{Zhang2024ConsecutiveCycles,
  author  = {Zhang, Wenqian},
  title   = {The spectral radius, maximum average degree and cycles of consecutive lengths of graphs},
  journal = {Graphs Combin.},
  volume  = {40},
  number  = {2},
  pages   = {Paper No. 32},
  year    = {2024},
  doi     = {10.1007/s00373-024-02761-0}
}

@article{Zhang2025Skeletons,
  author       = {Zhang, Wenqian},
  title        = {Spectral skeletons and applications},
  journal      = {arXiv preprint arXiv:2501.14218},
  year         = {2025},
  archivePrefix = {arXiv},
  eprint       = {2501.14218},
  primaryClass = {math.CO},
}

@article{CioabaDesaiTait2023,
  author  = {Cioab{\u a}, Sebastian M. and Desai, Dheer Noal and Tait, Michael},
  title   = {A spectral {Erd\H{o}s--S\'{o}s} theorem},
  journal = {SIAM J. Discrete Math.},
  volume  = {37},
  number  = {3},
  pages   = {2228--2239},
  year    = {2023},
  doi     = {10.1137/22M150650X}
}

@article{LeiLi2024,
  author  = {Lei, Xingyu and Li, Shuchao},
  title   = {Spectral extremal problem on disjoint color-critical graphs},
  journal = {Electron. J. Combin.},
  volume  = {31},
  number  = {1},
  pages   = {Paper No. 1.25},
  year    = {2024},
  doi     = {10.37236/12245}
}

@article{ZhaoHuangLin2021,
  author  = {Zhao, Yanhua and Huang, Xueyi and Lin, Huiqiu},
  title   = {The maximum spectral radius of wheel-free graphs},
  journal = {Discrete Math.},
  volume  = {344},
  number  = {5},
  pages   = {112341},
  year    = {2021},
  doi     = {10.1016/j.disc.2021.112341}
}

@article{Zhang2025Fan,
  author       = {Zhang, Wenqian},
  title        = {Spectral extrema of graphs forbidding a fan},
  journal      = {arXiv preprint arXiv:2508.05911},
  year         = {2025},
  archivePrefix = {arXiv},
  eprint       = {2508.05911},
  primaryClass = {math.CO},
}

@article{Nikiforov2017,
  author  = {Nikiforov, Vladimir},
  title   = {Merging the {$A$}- and {$Q$}-spectral theories},
  journal = {Appl. Anal. Discrete Math.},
  volume  = {11},
  number  = {1},
  pages   = {81--107},
  year    = {2017},
  doi     = {10.2298/AADM1701081N}
}

@article{ErdosGallai1959,
  author  = {Erd{\H{o}}s, Paul and Gallai, Tibor},
  title   = {On maximal paths and circuits of graphs},
  journal = {Acta Math. Acad. Sci. Hungar.},
  volume  = {10},
  pages   = {337--356},
  year    = {1959},
  doi     = {10.1007/BF02024498}
}

@article{VossZuluaga1977,
  author  = {Voss, H. J. and Zuluaga, C.},
  title   = {Maximale gerade und ungerade Kreise in Graphen, {I}},
  journal = {Wissenschaftliche Zeitschrift der Technischen Hochschule Ilmenau},
  volume  = {23},
  number  = {4},
  pages   = {57--70},
  year    = {1977}
}

@article{liu2023unsolved,
  title={Unsolved problems in spectral graph theory},
  author={Liu, Lele and Ning, Bo},
  journal={arXiv preprint arXiv:2305.10290},
  year={2023}
}

@article{Wilf1986,
  author  = {Wilf, Herbert S.},
  title   = {Spectral bounds for the clique and independence numbers of graphs},
  journal = {J. Combin. Theory Ser. B},
  volume  = {40},
  number  = {1},
  pages   = {113--117},
  year    = {1986},
  doi     = {10.1016/0095-8956(86)90069-9}
}

@article{Stanley1987,
  author  = {Stanley, Richard P.},
  title   = {A bound on the spectral radius of graphs with $e$ edges},
  journal = {Linear Algebra Appl.},
  volume  = {87},
  pages   = {267--269},
  year    = {1987},
  doi     = {10.1016/0024-3795(87)90163-2}
}

@article{BabaiGuiduli2009,
  author  = {Babai, L{\'a}szl{\'o} and Guiduli, Barry},
  title   = {Spectral extrema for graphs: the {Zarankiewicz} problem},
  journal = {Electron. J. Combin.},
  volume  = {16},
  number  = {1},
  pages   = {Research Paper 123},
  year    = {2009},
  doi     = {10.37236/212}
}

@misc{ByrneDesaiTait2025,
  author        = {Byrne, John and Desai, Dheer Noal and Tait, Michael},
  title         = {A general theorem in spectral extremal graph theory},
  year          = {2025},
  eprint        = {2401.07266},
  archivePrefix = {arXiv},
  primaryClass  = {math.CO},
  howpublished  = {To appear in Trans. Amer. Math. Soc.}
}

@article{LiYu2023AalphaEvenCycles,
  author  = {Li, Shuchao and Yu, Yuantian},
  title   = {On {$A_\alpha$} spectral extrema of graphs forbidding even cycles},
  journal = {Linear Algebra Appl.},
  volume  = {668},
  pages   = {11--27},
  year    = {2023},
  doi     = {10.1016/j.laa.2023.03.018}
}

@article{ChenLiLiYuZhang2023AalphaES,
  author  = {Chen, Ming-Zhu and Li, Shuchao and Li, Zhao-Ming and Yu, Yuantian and Zhang, Xiao-Dong},
  title   = {An {$A_{\alpha}$}-spectral {Erd\H{o}s--S\'{o}s} theorem},
  journal = {Electron. J. Combin.},
  volume  = {30},
  number  = {3},
  pages   = {Paper No. 3.34},
  year    = {2023},
  doi     = {10.37236/11593}
}

@article{ChenLiuZhang2023LinearForestsAalpha,
  author  = {Chen, Ming-Zhu and Liu, A-Ming and Zhang, Xiao-Dong},
  title   = {On the {$A_\alpha$}-spectral radius of graphs without linear forests},
  journal = {Appl. Math. Comput.},
  volume  = {450},
  pages   = {128005},
  year    = {2023},
  doi     = {10.1016/j.amc.2023.128005}
}

@article{LiYuZhang2023AalphaErdosPosa,
  author  = {Li, Shuchao and Yu, Yuantian and Zhang, Huihui},
  title   = {An {$A_\alpha$}-spectral {Erd\H{o}s--P\'{o}sa} theorem},
  journal = {Discrete Math.},
  volume  = {346},
  number  = {9},
  pages   = {113494},
  year    = {2023},
  doi     = {10.1016/j.disc.2023.113494}
}

@article{NikiforovYuan2015QEvenCycles,
  author  = {Nikiforov, Vladimir and Yuan, Xiying},
  title   = {Maxima of the {$Q$}-index: forbidden even cycles},
  journal = {Linear Algebra Appl.},
  volume  = {471},
  pages   = {636--653},
  year    = {2015},
  doi     = {10.1016/j.laa.2015.01.032}
}

@article{LiNingConsecutive2023,
  author  = {Li, Binlong and Ning, Bo},
  title   = {Eigenvalues and cycles of consecutive lengths},
  journal = {J. Graph Theory},
  volume  = {103},
  number  = {3},
  pages   = {486--492},
  year    = {2023},
  doi     = {10.1002/jgt.22930}
}

@article{TaitTobin2017,
  author  = {Tait, Michael and Tobin, Josh},
  title   = {Three conjectures in extremal spectral graph theory},
  journal = {J. Combin. Theory Ser. B},
  volume  = {126},
  pages   = {137--161},
  year    = {2017},
  doi     = {10.1016/j.jctb.2017.04.006}
}

@article{CioabaDesaiTait2022OddWheels,
  author  = {Cioab{\u{a}}, Sebastian M. and Desai, Dheer Noal and Tait, Michael},
  title   = {The spectral radius of graphs with no odd wheels},
  journal = {European J. Combin.},
  volume  = {99},
  pages   = {103420},
  year    = {2022},
  doi     = {10.1016/j.ejc.2021.103420}
}

@article{DvorakMohar2010,
  author  = {Dvo{\v r}{\'a}k, Zden{\v e}k and Mohar, Bojan},
  title   = {Spectral radius of finite and infinite planar graphs and of graphs of bounded genus},
  journal = {Journal of Combinatorial Theory, Series B},
  volume  = {100},
  number  = {6},
  pages   = {729--739},
  year    = {2010},
  doi     = {10.1016/j.jctb.2010.07.006}
}

@article{Tait2019CdV,
  author  = {Tait, Michael},
  title   = {The {C}olin de {V}erdi{\`e}re parameter, excluded minors, and the spectral radius},
  journal = {Journal of Combinatorial Theory, Series A},
  volume  = {166},
  pages   = {42--58},
  year    = {2019},
  doi     = {10.1016/j.jcta.2019.02.018}
}

\end{document}